\title[Koszul duality for generalised Steinberg representations]{Koszul duality for generalised Steinberg representations of $p$-adic groups}
\author[C. Cunningham]{Clifton Cunningham}
\address{Department of Mathematics and Statistics, University of Calgary, 
2500 University Drive NW, 
Calgary, Alberta, 
T2N 1N4, 
Canada}
\email{ccunning@ucalgary.ca}
\thanks{Clifton Cunningham's research is supported by NSERC Discovery Grant RGPIN-2020-05220.}
\author[J. Steele]{James Steele}
\address{Department of Mathematics and Statistics, University of Calgary, 
2500 University Drive NW, 
Calgary, Alberta, 
T2N 1N4, 
Canada}
\email{james.steele@ucalgary.ca}
\thanks{Both authors are grateful to the \href{https://www.slmath.org}{Simons Laufer Mathematical Sciences Institute} for supporting the \href{https://www.slmath.org/summer-schools/1072}{Koszul Duality in the Local Langlands Program} summer school in 2024, 
where some of this paper was written.}
\newcommand{\ZZ}{{\mathbb{Z}}}
\newcommand{\CC}{{\mathbb{C}}}
\DeclareMathOperator{\GL}{GL}
\DeclareMathOperator{\Sym}{Sym}
\DeclareMathOperator{\SL}{SL}
\newcommand{\op}{_{\operatorname{op}}}
\newcommand{\Lgroup}[1]{{\hskip-2 pt \,^L\hskip-1pt{#1}}}
\newcommand{\dualgroup}[1]{{\widehat{#1}}}
\newcommand{\Frob}{{\operatorname{Fr}}}
\DeclareMathOperator{\Ext}{Ext}
\newcommand{\Spec}[1]{{\operatorname{Spec}(#1)}}
\DeclareMathOperator{\Ad}{Ad}
\DeclareMathOperator{\End}{End}
\newcommand{\abs}[1]{{\vert #1 \vert}}
\newcommand{\ceq}{{\, :=\, }}
\newcommand{\iso}{{\ \cong\ }}
\DeclareMathOperator{\diag}{diag}
\newcommand{\Perv}{\mathbf{Per}}
\newcommand{\Rep}{\mathbf{Rep}}
\newcommand{\IC}{\text{IC}}
\newcommand{\Ft}{\operatorname{\mathsf{F\hskip-1pt t}}}
\newcommand{\labitem}[2]{
\def\@itemlabel{\textbf{#1}}
\item
\def\@currentlabel{#1}\label{#2}}
\newcommand{\1}{{\mathbbm{1}}}
\newcommand{\Ind}{\operatorname{Ind}}
\newcommand{\C}{\mathbb{C}}
\newcommand{\Hom}{\mathrm{Hom}}
\newcommand{\St}{\mathrm{St}}
\newcommand{\bpm}{\begin{pmatrix}}
\newcommand{\epm}{\end{pmatrix}}
\newcommand{\bspm}{\left(\begin{smallmatrix}}
\newcommand{\espm}{\end{smallmatrix}\right)}
\newcommand{\Mod}{{\operatorname{\mathbf{Mod}}}}
\renewcommand{\St}{{\operatorname{St}}}
\newcommand{\Gmult}{\mathbb{G}_{\hskip-1.5pt\text{m}}}
\renewcommand{\IC}{\mathcal{I\hskip-1pt C}}
\newcommand{\AZ}{{\text{A\hskip-1.5ptZ}}}
\renewcommand{\Rep}{\operatorname{\mathbf{Rep}}}
\theoremstyle{definition}
\newtheorem{theorem}{Theorem}[section]
\newtheorem{corollary}[theorem]{Corollary}
\newtheorem{lemma}[theorem]{Lemma}
\newtheorem{proposition}[theorem]{Proposition}
\newtheorem{definition}[theorem]{Definition}
\newtheorem{remark}[theorem]{Remark}
\newtheorem{example}[theorem]{Example}
\date{\today}                                
\begin{document}

%\dedicatory{}

\begin{abstract}
Let $G$ be a semisimple group, split over a non-Archimedean field $F$. We prove that the category of modules over the extension algebra of generalised Steinberg representations of $G(F)$ is equivalent to a full subcategory of equivariant perverse sheaves on the variety of Langlands parameters for these representations. Specifically, we establish an equivalence 
\[
\Mod(\Ext_G^\bullet(\Sigma_\lambda, \Sigma_\lambda))  \simeq \Perv_{\dualgroup{G}}^\circ(X_\lambda), 
\]
where $\Sigma_\lambda$ is the direct sum of generalised Steinberg representations and $\Perv_{\dualgroup{G}}^\circ(X_\lambda)$ is the subcategory of perverse sheaves on the variety of Langlands parameters $X_\lambda$  corresponding to these representations under Vogan's geometrisation of the Langlands correspondence. Furthermore, we demonstrate that this equivalence is a true Koszul duality by showing that the extension algebra of generalised Steinberg representations is Koszul dual to the endomorphism algebra of the direct sum of corresponding equivariant perverse sheaves, taken in the equivariant derived category $D_{\dualgroup{G}}^b(X_\lambda)$.
\end{abstract}
 
\maketitle
\tableofcontents

\setcounter{section}{-1}

\section{Introduction} 
In this paper we consider two categories that are built from the generalised Steinberg representations of $G(F)$, where $G$ is a semisimple group, split over a $p$-adic field $F$. Generalised Steinberg representations are exactly the irreducible admissible representations $\pi$ of $G(F)$ for which $H^i(G, \pi)$ is non-trivial, for some $i \geq 0$, described in detail in Section~\ref{section:spectral cat}. One of the two categories in our first main result, Theorem~\ref{thm:main1}, is modules over the extension algebra 
\[
\Ext_G^\bullet(\Sigma_\lambda, \Sigma_\lambda) = \bigoplus_{I,J\subseteq S} \Ext_{G}^{\delta(I,J)}(\sigma_I, \sigma_J),
\]
where $\Sigma_\lambda$ is the direct sum of the generalised Steinberg representations of $G(F)$, individually, written as $\sigma_I$ and parametrised by subsets of simple roots $I \subseteq S$ of $G$ when taken together. Here, extensions are taken in $\textbf{Rep}(G)$, the category of smooth representations of $G(F)$.
To describe the other category, we use Vogan's conception of the Langlands correspondence, matching generalised Steinberg representations with certain simple $\dualgroup{G}$-equivariant perverse sheaves on the variety $X_\lambda$ of Langlands parameters for these representations and we identify a full subcategory 
\[
\Perv_{\dualgroup{G}}^\circ(X_\lambda) \hookrightarrow \Perv_{\dualgroup{G}}(X_\lambda),
\]
called the principal block of $\Perv_{\dualgroup{G}}(X_\lambda)$ and defined precisely in Section~\ref{subsection:principal block}. In this paper we show that this Abelian subcategory is equivalent to modules over the extension algebra of generalised Steinberg representations:
\begin{equation}\label{eqn:main, intro}
\Mod(\Ext_{G}^\bullet(\Sigma_\lambda, \Sigma_\lambda))
\simeq 
\Perv_{\dualgroup{G}}^\circ(X_\lambda).
\end{equation}
This is the first main result of the paper, Theorem~\ref{thm:main1}. The equivalence suggests a Koszul duality between a full subcategory of $\textbf{Rep}(G)$ cut out by generalised Steinberg representations and $\Perv_{\dualgroup{G}}^\circ(X_\lambda)$. Indeed, we proceed to show that the extension algebra above is Koszul and that there is a Koszul duality of algebras
\[
\Ext_{G}^\bullet(\Sigma_\lambda, \Sigma_\lambda)^! \cong \Ext_{\dualgroup{G}}^\bullet(\mathcal{S}_\lambda, \mathcal{S}_\lambda), 
\]
with the latter algebra denoting the endomorphism algebra of $\mathcal{S}_\lambda$ in the $\dualgroup{G}$-equivariant derived category on $X_\lambda$, where  $\mathcal{S}_\lambda$ denotes the direct sum of representatives from each class of simple objects in the principal block of its perverse heart. 

\subsection{Motivation}

Studying the category of $\dualgroup{G}$-equivariant perverse sheaves on $X_\lambda$ itself, as opposed to the category of modules over its extension algebra, is important from the perspective of the Voganish project \cites{Mracek,CFMMX,CFK,CFZ:cubic,CFZ:unipotent,CR1,CR2,CDFZ:Generic, CDFZ:Whittaker}, whose aim is generalise Arthur packets using vanishing cycles of perverse sheaves, as conjectured in \cite{CFMMX}, based on \cite{Vogan:Langlands}, and demonstrated for $G(F) = \GL_n(F)$ in \cites{CR1,CR2}.
With this in mind, we believe that an understanding of the internal structure of the Abelian category $\Perv_{\dualgroup{G}}(X_\lambda)$ will lead to a better understanding of the geometric properties of equivariant intersection cohomology complexes corresponding to representations of Arthur type via the categorical interpretation of the local Langlands correspondence as it appears in this paper. 
Also, we expect the constructions in this paper can be generalised to a larger class of representations by combining the results here with a geometric perspective on endoscopy (building on \cite{ABV} and work in progress to prove Vogan's conjecture for classical groups generalising \cite{CR1} and \cite{CR2}) and techniques from \cite{CFMMX}.

\subsection{Relation to other work} 

In part, this work was inspired by insights gained from a conjecture of Wolfgang Soergel \cite{Soergel} that, for groups over $\mathbb{R}$, relates a full subcategory of $G(F)$ identified by a specific infinitesimal character to the extension algebra of simple equivariant perverse sheaves appearing on the appropriate Vogan variety. 
The analogue of Soergel's conjecture for $p$-adic groups suggests a relation between the extension algebra of the Vogan variety $X_\lambda$ attached to an infinitesimal parameter $\lambda$ and a full subcategory of $\Rep(G)$ attached to $\lambda$. 
In this paper, through Theorem~\ref{thm:main1} and Theorem~\ref{thm:main2} we demonstrate such a $p$-adic analogue to Soergel's conjecture, relating the extension algebra of representations with infinitesimal parameter $\lambda$ to the category of equivariant perverse sheaves on the Vogan variety for $\lambda$. In the final section, we then show that a version of Soergel's conjecture can be recovered by Koszul duality of the extension algebras on either side of the Langlands correspondence. 

\subsection{Notation and conventions} 
Throughout, let $G$ be a connected reductive algebraic group, semisimple and split over a local non-Archimedean field $F$. By $\Rep(G)$ we denote the category of smooth, complex representations of $G(F)$, and by $\textbf{Rep}_{\text{fl}}(G)$ the full subcategory of representations of finite length. The algebras appearing in this paper are associative algebras with identity over $\CC$. We write $\Mod(A)$ for the category of finite dimensional right modules over such an algebra $A$. By variety, we mean a reduced, separated scheme of finite type over $\CC$. When referring to closures of subvarieties, we mean the closure in the Zariski topology.  
\subsection{Acknowledgements}

We thank Wolfgang Soergel, Ahmed Moussaoui and Kristaps Balodis, for helpful conversations.

\section{Geometry of Steinberg representations}\label{section:spectral cat}
In this section, we study the representation-theoretic side of our correspondence, focusing on the extensions between generalised Steinberg representations of $ G(F) $. These may be understood as the ``maximally singular'' irreducible representations of $ G(F)$, since any irreducible representation in $\Rep(G)$ with non-trivial cohomology must be isomorphic to a generalised Steinberg representation \cite{Ca74}*{Theorem 2.c}. We explicitly describe the extension algebra generated by these representations, drawing on the work of \cite{Dat} and \cite{Orlik}, which provide a combinatorial computation of the extensions between generalised Steinberg representations, classified according to the subsets of simple roots of $G$ by which these representations are parametrised.

\subsection{Generalised Steinberg representations}\label{ssec:gneralisedSteinberg}
Fix a Borel subgroup $B \subset G$ and a maximal torus $T \subset B$ and let $S$ be the set of simple $F$-roots of $G$ determined by these choices. For any subset $I$ of $S$, let $P_I$ be the standard parabolic subgroup of $G$ so that the unipotent radical $U_I$ of $P_I$ contains the root spaces for all $\alpha \in I$; we write $M_I$ for the Levi of $P_I$. Then, $I\subseteq J$ if and only if $P_I \subseteq P_J$. For instance, $P_\emptyset = B$ while $P_{S} = G$. We denote by 
\[
\Ind_{P_I}^{G}: \textbf{Rep}(M_I) \rightarrow \textbf{Rep}(G)
\]
the usual (non-normalised) parabolic induction, being inflation from $M_I(F)$ to $P_I(F)$, followed by induction. Explicitly, any smooth representation $\pi = (\pi,V)$ of a Levi subgroup $M(F) \subseteq G(F)$ can be viewed as a representation of the corresponding parabolic $P(F)$ by letting the unipotent factor act trivially. Then, the $G(F)$-representation $\Ind_{P}^{G}(\pi)$ consists of the space of functions 
\[
f:G(F) \rightarrow V ,
\]
such that 
\begin{enumerate}
    \item $f$ is locally constant and compactly supported; 
    \item $f(pg) = \pi(p)f(g)$, for all $p \in P(F)$ and $g \in G(F)$. 
\end{enumerate}
\begin{definition}\label{definition:gensteinbergs}
For any $I \subseteq S$, the \emph{generalised Steinberg representation} attached to $I$ is 
\[
\sigma_I:= \Ind_{P_I}^{G}(\1_I) / \sum_{I \subsetneq J} \Ind_{P_J}^{G}(\1_J) ,
\]
where $\1_I$ is the trivial representation of $M_I(F)$ and the induction is non-normalised.
\end{definition}

Note that $\sigma_\emptyset$ is the usual Steinberg representation of $G(F)$, commonly denoted by $\St_G$,  and $\sigma_{S}$ is the trivial representation of $G(F)$, for which we use the notation $\1_G$. 
Note that this definition uses non-normalised parabolic induction. While these representations have been constructed in various ways, we follow \cite{Orlik}*{\S 1} and \cite{Dat}*{\S 1} in the description provided. 
\begin{lemma}
    Generalised Steinberg representations are irreducible, admissible, and $\sigma_I \cong \sigma_J$ if and only if $I = J$. Furthermore, all generalised Steinberg representations lie in a common Bernstein block of $\Rep(G)$ determined by our choice of torus for $G$ and its trivial representation.
\end{lemma}

\begin{proof}
Generalised Steinberg representations are admissible, as they are quotients of representations induced from admissible representations of a Levi subgroups.
By \cite{Ca}*{Theorem 1.1}, 
generalised Steinberg representations are irreducible, and further, $\sigma_I \cong \sigma_J$ if and only if $I = J$ (in which case the isomorphism is equality).  
We can see that all generalised Steinberg representations appear in the principal block since every such representation appears as a subquotient of the induced representation $\Ind_{P_\emptyset}^G(\1_\emptyset)$,
by definition, where $P_\emptyset = T$ is our choice of maximal torus and $\1_\emptyset = \1_T$ is its trivial representation. A trivial representation of the torus is supercuspidal, hence this implies that all generalised Steinberg representations appear in the Bernstein block associated with the pair $[T,\1_T]$.
\end{proof}

%\begin{definition}
The following notation will be used in the next result and in the remainder of this paper:
for $I,J \subseteq S$, the \emph{symmetric difference} of $I$ and $J$ is the subset 
\[
I \ominus J := (I \cup J) \setminus (I \cap J),
\]
whose cardinality we denote as $\delta(I,J) := | I \ominus J | $. 
%\end{definition} 

\begin{proposition}\cite{Dat}*{Theorem~1.3} \label{prop:dat} 
Let $I ,J, K \subseteq S$ be subsets of simple roots for a split semi-simple group $G$. 
\begin{enumerate}
\item The corresponding generalised Steinberg representations $\sigma_I$ and $\sigma_J$ admit extensions given by the formula
\[
\dim \Ext_G^{k}(\sigma_I, \sigma_J) = 
\begin{cases} 
1 &\text{ if } k = \delta(I,J) \\
0 & \text{ otherwise.} 
\end{cases} 
\]
\item The cup product
\[
\Ext_G^{i }(\sigma_I, \sigma_J) \otimes \Ext_G^{j }(\sigma_J, \sigma_K) \rightarrow \Ext_G^{i + j}(\sigma_I, \sigma_K) 
\]
is nonzero if and only if $i = \delta(I,J)$, $\delta(J,K) = j$, and $i + j = \delta(I,K)$, in which case it is an isomorphism. 
\end{enumerate} 
\end{proposition}

\begin{proof}
Result (1) and the forward direction of (2) are given explicitly by \cite{Dat}*{Theorem~1.3} (the former was also proved independently in \cite{Orlik}*{Theorem 1}). Meanwhile, the converse statement in (2) is a straightforward application of the definition of the cup product to (1). 
\end{proof}

\subsection{Aubert-Zelevinsky duality}\label{ssection:AZ}
Here we recall basic properties of a pair of functors that shed light on $\Rep(G)$: normalised parabolic induction and Aubert-Zelevinsky duality (the latter sometimes referred to as Aubert-Zelevinky involution or cohomological duality). Throughout, we still assume that $G$ is split semisimple, though, of course, the results hold under much weaker constraints. 

%\begin{definition}
Following \cite{NP}*{\S 2}, we refer to
    \[
\AZ_G:= \Ext_G^d(-, \mathcal{H}(G)) \circ (-)^\vee : \textbf{Rep}_{\text{fl}}(G) \rightarrow \textbf{Rep}_{\text{fl}}(G),
    \]
as the \emph{Aubert-Zelevinsky duality functor} for $G$, where $d = |S|$ and
 \[
 \Ext_G^d(-, \mathcal{H}(G)):\textbf{Rep}_{\text{fl}}(G) \rightarrow \textbf{Rep}_{\text{fl}}(G)^{\text{op}}
 \]
is Bernstein's cohomological duality functor built from the Hecke algebra $\mathcal{H}(G)$ (cf., \cite{BR}*{\S 5.1}), and where 
\[
(-)^\vee: \textbf{Rep}_{\text{fl}}(G)  \rightarrow \textbf{Rep}_{\text{fl}}(G)^{\text{op}}
\]
is the usual contragradience duality for admissible representations of $G$. We say that $\text{A\hskip-1ptZ}(\pi)$ is the \textit{Aubert-Zelevinksy dual} of $\pi$, for any $\pi \in \textbf{Rep}_{\text{fl}}(G)$. In cases where there can be no confusion, we write $\text{A\hskip-1ptZ} = \text{A\hskip-1ptZ}_G$\footnote{In fact, the description of this involution as a functor is due to Bernstein \cite{BR}*{\S 5}, though it coincides with \cite{Aubert}*{Definition 1.5} when passing to the Grothendieck group. We use the term ``Aubert-Zelevinsky" duality  in accordance with \cite{NP}*{\S 2}, which provides a useful overview of the various incarnations of this phenomenon.}.
%\end{definition}

As we will need to make use of both normalised and non-normalised parabolic induction, we use the notation
\[
i_P^G(-) := \Ind_P^G( \delta_P^{1/2} \otimes (-)) : \textbf{Rep}(M) \rightarrow \textbf{Rep}(G)
\]
for normalised parabolic induction from $M(F)$ to $G(F)$, where $P = MU$ is the Levi decomposition of the appropriate parabolic. 
\begin{proposition}\label{prop:aubertproperties}
Aubert-Zelevinsky duality enjoys the following properties: 
\begin{enumerate}
\item $\AZ$ is exact; 
    \item $\AZ \circ \AZ = \text{id}_{\textbf{Rep}(G)}$, \emph{i.e}., $\AZ$ is an idempotent autoequivalence; 
    \item $\AZ$ fixes supercuspidal representations; 
    \item For any Levi subgroup $M \subseteq G$, with parabolic $P = MU$, \[\AZ_G \circ i_P^G =  i_{\overline{P}}^G \circ \AZ_M,
    \]
    where ${\bar P}$ is the opposite parabolic to $P$.
    \item $\AZ(\chi \otimes \pi) \cong \chi \otimes \AZ(\pi)$, where $\pi$ is an irreducible representation and $\chi$ is any character. 
\end{enumerate}
\end{proposition}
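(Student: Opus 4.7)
The overall strategy is to derive each of the five properties directly from the structure of the two constituent functors---Bernstein's cohomological duality $\Ext_G^d(-, \mathcal{H}(G))$ and contragredient $(-)^\vee$---as developed in \cite{NP} and \cite{BR}. Most of these statements are classical; the bulk of the work is organizational, verifying that the composite respects the relevant structures.

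For (1), the plan is to combine exactness of $(-)^\vee$ on $\textbf{Rep}_{\text{fl}}(G)$ with the fact that $\Ext_G^d(-, \mathcal{H}(G))$ is itself exact on finite-length representations, this being a consequence of the classical result that the derived functor $\mathrm{R}\mathrm{Hom}_G(-, \mathcal{H}(G))$ is an involutive autoequivalence concentrated in degree $d$ on $\textbf{Rep}_{\text{fl}}(G)$. Property (2) then follows from this concentration statement (which gives biduality for Bernstein's functor) combined with the classical involutivity of contragredient on admissibles, together with a compatibility check that these two dualities commute on $\textbf{Rep}_{\text{fl}}(G)$. For (3), I would observe that a supercuspidal $\pi$ is both projective and injective in its Bernstein block, so that $\Ext_G^i(\pi^\vee, \mathcal{H}(G))$ vanishes in all degrees but $d$, and a direct identification of $\Ext_G^d(\pi^\vee, \mathcal{H}(G))$ with $\pi$ then finishes the argument.

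The most substantive step is (4). Here the plan is to invoke Bernstein's second adjointness, which identifies the left adjoint of $\nInd_P^G$ with the normalized Jacquet functor with respect to $\overline{P}$. Applying cohomological duality exchanges left and right adjoints---introducing the swap $P \leftrightarrow \overline{P}$---and combining with the compatibility of contragredient with normalized parabolic induction (which again swaps $P$ and $\overline{P}$) yields the stated formula. I expect the main obstacle here to be tracking normalizations and conventions carefully, since the identity is delicate with respect to the distinction between $\Ind$ and $\nInd$ and the choice of parabolic versus its opposite. Finally, (5) reduces to the compatibility of each of $(-)^\vee$ and $\Ext_G^d(-, \mathcal{H}(G))$ with the internal tensor product on $\textbf{Rep}(G)$: the former is classical, and the latter follows from the algebra structure on $\mathcal{H}(G)$ together with a Künneth-type argument.
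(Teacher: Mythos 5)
Your treatment of (1)--(4) reconstructs from first principles exactly the content that the paper outsources to \cite{BR}*{Theorem 31} and to ``well known properties of contragredience''; this is the same route, unpacked, and your outline is consistent with what BR establish (concentration of $\mathrm{R}\Hom_G(-,\mathcal{H}(G))$ in degree $d$ on finite-length objects, biduality, second adjointness producing the $P\leftrightarrow\bar P$ swap). Where you genuinely diverge is (5). The paper derives (5) from (2) in one line, on the grounds that ``any equivalence of a monoidal category preserves monoidal structure''; taken literally this is false --- twisting by a nontrivial character is an autoequivalence of $\textbf{Rep}(G)$ that is not $\otimes$-monoidal --- so the paper's own justification is shaky. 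Your route instead asks each constituent functor to commute with the internal tensor. Be aware this is also not free: for the contragredient, $(\pi\otimes\pi')^\vee\cong\pi^\vee\otimes(\pi')^\vee$ is routine only when at least one factor is finite-dimensional (in particular a character), and for $\Ext_G^d(-,\mathcal{H}(G))$ a ``K\"unneth-type argument'' is not an off-the-shelf fact. Both of these caveats are harmless for the one place the paper actually uses (5), namely in the proof of Proposition~\ref{proposition:standard} where $\pi'=\delta_{P_{I^c}}^{-1/2}$ is a character; if you state and prove (5) only in that generality, your argument closes cleanly and is in fact on firmer footing than the paper's one-liner.
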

\begin{proof}
    Statements (1)-(4) can be calculated by applying well known properties of contragradience to \cite{BR}*{Theorem 31}. For (5), from \cite{NP}*{Theorem 2} it follows that $\AZ(\pi)$ can be characterised as the irreducible representation, unique up to isomorphism, so that 
    \[
\dim \Ext_G^{d(\pi)}(\pi, \AZ(\pi))  = 1,
    \]
    where $d(\pi)$ is the split rank of cuspidal support for $\pi$, given by $d = |S|$ in our case. Now, the functor
    \[
\chi \otimes - : \Rep(G) \rightarrow \Rep(G) 
    \]
    is an equivalence of categories, since its inverse is given by tensoring with $\chi^{-1}$. Any equivalence of Abelian categories induces an equivalence of the corresponding derived categories, and thus we have
    \[
\Ext_G^{d(\pi)}(\pi, \AZ(\pi)) \cong \Ext_G^{d(\pi)}(\chi \otimes \pi, \chi \otimes \AZ(\pi)). 
    \]
    Since $d(\pi) = d(\chi \otimes \pi)$, this same characterization of Aubert-Zelevinsky duality implies that
    \[
\chi \otimes \AZ(\pi) \cong \AZ(\chi \otimes \pi) 
    \]
    which is what we wanted to show. 
\end{proof}

\begin{lemma}\label{lem:aubertdualofsteinberg} 
    For any $I \subseteq S$, the Aubert-Zelevinsky dual of the corresponding generalised Steinberg representation is given by 
    \[
\AZ(\sigma_I) \cong \sigma_{I^c} 
    \]
    for $I^c \ceq S\setminus I$. 
\end{lemma}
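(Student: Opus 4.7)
The strategy is to establish the identity $[\AZ(\sigma_I)] = [\sigma_{I^c}]$ in the Grothendieck group of the principal Bernstein block. Since $\sigma_I$ is irreducible, $\AZ(\sigma_I)$ is also irreducible by Proposition~\ref{prop:aubertproperties}(1,2), and since $\sigma_I$ lies in the principal Bernstein block by Lemma~\ref{lemma:block}, so does $\AZ(\sigma_I)$: indeed $\AZ$ preserves Bernstein blocks, as it is an autoequivalence that fixes supercuspidals by Proposition~\ref{prop:aubertproperties}(3) and commutes with parabolic induction (up to the opposite parabolic) by Proposition~\ref{prop:aubertproperties}(4). An equality of Grothendieck classes will therefore force the desired isomorphism of representations.

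The main combinatorial input is Möbius inversion on the Boolean lattice $2^{R^+}$. Iterating Definition~\ref{definition:gensteinbergs} shows that $\Ind_{P_J}^G(\1_J)$ has composition factors exactly $\{\sigma_K : K \supseteq J\}$, each with multiplicity one, whence
\[
[\sigma_I] = \sum_{J \supseteq I} (-1)^{|J|-|I|} [\Ind_{P_J}^G(\1_J)].
\]
Applying the exact functor $\AZ$ yields
\[
[\AZ(\sigma_I)] = \sum_{J \supseteq I} (-1)^{|J|-|I|} \bigl[\AZ(\Ind_{P_J}^G(\1_J))\bigr].
\]

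To evaluate each summand, I would convert to normalised induction via $\Ind_{P_J}^G(\1_J) \cong \nInd_{P_J}^G(\delta_{P_J}^{-1/2})$, apply Proposition~\ref{prop:aubertproperties}(4) to move $\AZ$ inside induction at the cost of switching to the opposite parabolic and passing to $\AZ_{M_J}$ on the Levi, use the character-twist compatibility following from Proposition~\ref{prop:aubertproperties}(5) to pull the modular character outside $\AZ_{M_J}$, and appeal to the classical base case $\AZ_M(\1_M) \cong \St_M$ (established by a nested induction on the semisimple rank of $M = M_J$). Since $G$ is split, $\bar P_J$ is $G(F)$-conjugate to $P_J$, so the resulting induced representation can be rewritten from $P_J$. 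A second Möbius inversion, applied within $M_J$ and combined with induction in stages, then yields
\[
\bigl[\AZ(\Ind_{P_J}^G(\1_J))\bigr] = \sum_{K \subseteq J^c} [\sigma_K]
\]
in the Grothendieck group of the principal block. Substituting back and exchanging the order of summation reduces everything to the standard inclusion--exclusion identity
\[
\sum_{J\,:\,I \subseteq J \subseteq K^c} (-1)^{|J|-|I|} = \begin{cases} 1, & K = I^c, \\ 0, & \text{otherwise,} \end{cases}
\]
which gives $[\AZ(\sigma_I)] = [\sigma_{I^c}]$, as desired.

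The main obstacle will be the careful bookkeeping in the computation of $\AZ$ on the parabolic inductions: one must track the modular characters $\delta_{P_J}^{\pm 1/2}$ through the passage to the opposite parabolic and through $\AZ_{M_J}$, and verify that the resulting twist does not alter the Grothendieck class in the principal Bernstein block. Once the intermediate identity $[\AZ(\Ind_{P_J}^G(\1_J))] = \sum_{K \subseteq J^c} [\sigma_K]$ is in hand, the remainder is purely combinatorial Möbius inversion on $2^{R^+}$.
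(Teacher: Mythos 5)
Your proof is correct, but it takes a genuinely different route from the paper. The paper dispatches the lemma in two lines by combining two substantial external inputs: the Nori--Prasad characterization of the Aubert--Zelevinsky dual (Theorem~2 of \cite{NP}: $\AZ(\sigma_I)$ is the \emph{unique} irreducible $\pi$ with $\Ext_G^{d}(\sigma_I,\pi)\neq 0$ for $d=|R^+|$) and Dat's computation (Proposition~\ref{thm:dat}) that $\Ext_G^{k}(\sigma_I,\sigma_{I^c})\neq 0$ forces $k=\delta(I,I^c)=|R^+|$; uniqueness in Nori--Prasad then identifies $\AZ(\sigma_I)$ with $\sigma_{I^c}$. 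You instead compute $\AZ$ head-on via its functorial properties and M\"obius inversion in the Grothendieck group of the principal block. Your crucial intermediate identity $[\AZ(\Ind_{P_J}^G(\1_J))]=\sum_{K\subseteq J^c}[\sigma_K]$ does check out: after writing $\Ind_{P_J}^G(\1_J)=\nInd_{P_J}^G(\delta_{P_J}^{-1/2})$ and applying Proposition~\ref{prop:aubertproperties}(4), the two modulus characters $\delta_{P_J}^{1/2}$ and $\delta_{\bar P_J}^{1/2}$ cancel, leaving $[\Ind_{P_J}^G(\St_{M_J})]$ (using that $[\nInd_P^G\tau]$ depends only on the Levi, not on the choice of parabolic, so $\bar P_J$ may be replaced by $P_J$), and then M\"obius inversion for $\St_{M_J}$ inside $M_J$ plus induction-in-stages gives $\sum_{L\subseteq J}(-1)^{|L|}[\Ind_{P_L}^G(\1)]=\sum_{K\subseteq J^c}[\sigma_K]$. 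Your proof is more hands-on and avoids the Schneider--Stuhler/Nori--Prasad duality theorem, at the cost of heavier bookkeeping (normalization conventions, $\bar P$ versus $P$, the base case $\AZ_M(\1_M)\cong\St_M$ on every Levi). The paper's argument is shorter and aligns better with its focus on extension algebras, but uses more machinery as a black box. One small caution: your reliance on Proposition~\ref{prop:aubertproperties}(5) to pull a character twist outside $\AZ_{M_J}$ should really be the precise statement that $\AZ$ commutes with twisting by a character (i.e. $\AZ(\chi\otimes\pi)\cong\chi\otimes\AZ(\pi)$), since for a non-toral Levi a character is not supercuspidal and $\AZ_M(\chi)=\chi\otimes\St_M\neq\chi$; the paper's own Proposition~\ref{proposition:standard} proof has the same looseness.
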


\begin{proof}
By \cite{NP}*{Theorem 2}, the Aubert-Zelevinsky dual of $\AZ(\sigma_I)$ is isomorphic to any irreducible representation $\pi$ satisfying
\[
\Ext_G^d(\sigma_I, \pi) \neq 0 
\]
where $d = |S|$. From Proposition~\ref{prop:dat}, if $\Ext_G^k(\sigma_I, \sigma_{I^c} ) \neq 0 $, then $k = \delta(I,J)$, which is to say that
\[
k = | I \ominus I^c | = | (I \cup I^c ) \setminus (I \cap I^c)| = | S | = d
\]
as required. 
\end{proof} 

Note that $d = |S|$ is in fact the unique maximal natural number $d$ so that 
\[
\Ext_G^d( \pi , \pi') \neq 0 
\]
for any other irreducible representation $\pi'$, and $\pi' = \AZ(\pi)$ in this case \cite{NP}*{Theorem 2}.

\subsection{Standard representations}

\begin{proposition}\label{proposition:standard}
Let $I$ be a subset of $S$ and let $I^c$ be its complement in $S$. Let $\delta_{\overline{P}_{I^c}}^{1/2}$ be the modulus character for the parabolic subgroup ${\overline{P}_{I^c}}$, which denotes the opposite parabolic to $P_{I^c}$.
The standard representation for $\sigma_I$ is 
    \[
 \Delta(\sigma_I) 
 =
 i_{{\bar P}_{I^c}}^G
 \left(
 \delta^{1/2}_{{\bar P}_{I^c}}\otimes \St_{M_{I^c}}
 \right)
    \]
    where $St_{M_{I^c}}$ is the Steinberg representation for the Levi subgroup $M_{I^c} \subseteq G$; in particular, $\sigma_I$ is the unique irreducible quotient of $\Delta(\sigma_I)$. 
\end{proposition}

\begin{proof}
By definition of $\sigma_{I^c}$, there is a quotient
\[
\Ind_{P_{I^c}}^G(\mathbbm{1}_{M_{I^c}}) \twoheadrightarrow \sigma_{I^c}. 
\]
Since $\AZ$ is exact, applying it this map gives the new quotient 
\[
\AZ(\Ind_{P_{I^c}}^G(\mathbbm{1}_{M_{I^c}})) \twoheadrightarrow \sigma_I 
\]
since $\AZ(\sigma_{I^c}) = \sigma_I$, by Lemma~\ref{lem:aubertdualofsteinberg}. Taking the codomain of the map, we use the properties of $\AZ$ stated in Proposition~\ref{prop:aubertproperties} to calculate
\begin{align*}
    \AZ_G(\Ind_{P_{I^c}}^G(\mathbbm{1}_{M_{I^c}})) & = \AZ_G(i_{P_{I^c}}^G(\delta_{P_{I^c}}^{-1/2} \otimes \mathbbm{1}_{M_{I^c}}) )\\
    & = i_{{\bar P}_{I^c}}^G(\AZ_M(\delta_{P_{I^c}}^{-1/2}) \otimes \AZ_M(\mathbbm{1}_{M_{I^c}}) ) \\
    & = i_{{\bar P}_{I^c}}^G(\delta_{P_{I^c}}^{-1/2} \otimes \St_{M_{I^c}}) ),
\end{align*}
where the last equality follows from the well know fact that the Aubert-Zelevinsky dual of trivial representation is the Steinberg representation, and that $\delta_{P_{I^c}}^{-1/2}$ is a character, and hence supercuspidal and fixed by Aubert. But $\delta_{P_{I^c}}^{-1/2} = \delta_{{\bar P}_{I^c}}^{1/2}$, which is ${\bar P}_{I^c}$-positive. The quotient then becomes 
\[
 i_{{\bar P}_{I^c}}^G(\delta_{{\bar P}_{I^c}}^{1/2} \otimes \St_{M_{I^c}}) ) \twoheadrightarrow \sigma_I. 
\]
Since $\delta_{{\bar P}_{I^c}}^{1/2}$ is ${\bar P}_{I^c}$-positive and $\St_{M_{I^c}}$ is tempered, this is a standard representation \cite{Konno:Langlands}*{\S 3.1} and must be the standard representation for $\sigma_I$, by the existence of the above quotient. 
\end{proof}

\subsection{Langlands parameters}\label{subsection:parameters}

To make language precise, by \emph{Langlands parameter} we mean a group homomorphism 
\[
\phi: W_F' \rightarrow \Lgroup{G},
\]
with $W_F' := W_F \times \SL_2(\C)$, satisfying conditions appearing in \cite{Borel:Corvallis}*{\S 8.2} (see also \cite{CFMMX}*{\S 3.4})
and we reserve the term \emph{L-parameter} for the $\dualgroup{G}$-conjugacy class of a Langlands parameter. 
By an \emph{infinitesimal parameter} we mean a group homomorphism
\[
\lambda: W_F \rightarrow {}^L G 
\]
which can be viewed as a Langlands parameter trivial on $\SL_2(\CC)$.
We say that $\lambda$ is the infinitesimal parameter for a Langlands parameter $\phi$ if the following diagram commutes
\[\begin{tikzcd}[ampersand replacement=\&]
	{W_F':= W_F\times \text{SL}_2(\mathbb{C})} \&\& {{}^LG} \\
	{W_F}
	\arrow["\phi", from=1-1, to=1-3]
	\arrow[from=2-1, to=1-1]
	\arrow["\lambda"', from=2-1, to=1-3]
\end{tikzcd}\]
where the vertical map is given by $w \mapsto \left( w, d_w \right)$, where 
\begin{equation}\label{def:dw}
d_w := \diag\left(|w|_F^{1/2}, |w|_F^{-1/2} \right).
\end{equation}

\begin{remark}
    In the same way that one might collect irreducible representations of $G(F)$ that appear in a common induced representation from a Levi subgroup by recognising that such representations share a common character of the Bernstein centre (\emph{i.e.}, a ring homomorphism from the endomorphism ring of the identity functor on $\textbf{Rep}(G)$ to the complex numbers), we can collect Langlands parameters by studying those such parameters that share a common infinitesimal parameter upon pullback to $W_F$. In fact, these two organisational methods are compatible with the Langlands correspondence via the map 
$
\mathfrak{X}_G \rightarrow \mathfrak{Y}_G^{\text{st}}, 
$
where $\mathfrak{X}_G := \Spec{\End(\1_{\textbf{Rep}(G)})}$ and where
where $\mathfrak{Y}_G^{\text{st}}$ is the so-called \textit{stable Bernstein centre}, the variety consisting of $\dualgroup{G}$-conjugacy classes of infinitesimal parameters $W_F \rightarrow {}^LG$. Then, two Langlands parameters share a common infinitesimal parameter if and only if the irreducible representations appearing in their $L$-packets share a common character of the Bernstein centre \cite{Haines}*{Proposition 4.23}.
\end{remark}

\begin{lemma}\label{lemma:Lparameter}
The L-parameter $\phi_{I}$ for $\St_I$ is  
\[
\phi_{I} = \iota_{\dualgroup{M}_{I^c}} \circ \left(\delta^{1/2}_{{\bar P}_{I^c}}\otimes \phi_{\St_{M_{I^c}}} \right),
\]
where $\delta_{{\bar P}_{I^c}}$ is the modulus character of $\dualgroup{M}_{I^c}$ for the parabolic subgroup ${\bar P}_{I^c}$.
Here we write $\iota_{\dualgroup{M_{I^c}}}$ for the inclusion of $\dualgroup{M_{I^c}}$ into $\dualgroup{G}$.
Moreover, all generalised Steinberg representations share the same infinitesimal parameter
\[
 \lambda(w) = 
\diag\left(\abs{w}^{d/2}, \abs{w}^{(d-2)/2}, \ldots, \abs{w}^{-(d-2)/2},\abs{w}^{-d/2}\right).
\]
\end{lemma}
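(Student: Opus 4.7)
The plan is to combine Proposition~\ref{proposition:standard} with the standard compatibility of the local Langlands correspondence with normalized parabolic induction. By that proposition, $\sigma_I$ is the unique irreducible quotient of the standard representation
\[
\Delta(\sigma_I) = \nInd_{{\bar P}_{I^c}}^G\bigl(\delta^{1/2}_{{\bar P}_{I^c}} \otimes \St_{M_{I^c}}\bigr),
\]
so the L-parameter $\phi_I$ of $\sigma_I$ equals the L-parameter of this standard module. The Langlands classification tells us that the L-parameter of a standard module is obtained as the image under $\iota_{\dualgroup{M}_{I^c}} \colon \Lgroup{M}_{I^c} \hookrightarrow \Lgroup{G}$ of the L-parameter of the inducing tempered representation $\delta^{1/2}_{{\bar P}_{I^c}} \otimes \St_{M_{I^c}}$. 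Since $\delta^{1/2}_{{\bar P}_{I^c}}$ is an unramified character of $M_{I^c}(F)$ valued in the centre, local class field theory identifies it with the quasicharacter $\chi^{1/2}_{{\bar P}_{I^c}}$ of $\dualgroup{M}_{I^c}$, and the L-parameter of such a twist factors as $\chi^{1/2}_{{\bar P}_{I^c}} \otimes \phi_{\St_{M_{I^c}}}$. Composing with $\iota_{\dualgroup{M}_{I^c}}$ gives the asserted formula.

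For the second assertion, I would compute $\lambda(w) = \phi_I\bigl(w, \diag(|w|^{1/2}, |w|^{-1/2})\bigr)$ explicitly and show it is independent of $I$. Fix a maximal torus $\dualgroup{T} \subset \dualgroup{G}$ common to every $\dualgroup{M}_{I^c}$. The L-parameter $\phi_{\St_{M_{I^c}}}$ is trivial on $W_F$ and carries $\SL_2(\CC)$ into $\dualgroup{M}_{I^c}$ via the principal homomorphism, so
\[
\phi_{\St_{M_{I^c}}}\bigl(\diag(|w|^{1/2}, |w|^{-1/2})\bigr) = (2\rho^\vee_{M_{I^c}})(|w|^{1/2}),
\]
where $\rho^\vee_{M_{I^c}}$ is the half-sum of the positive coroots of $M_{I^c}$. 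A parallel computation via local class field theory identifies $\chi^{1/2}_{{\bar P}_{I^c}}(w)$ with $(2\rho^\vee_{{\bar P}_{I^c}})(|w|^{1/2})$, where $\rho^\vee_{{\bar P}_{I^c}}$ is the half-sum of the positive coroots lying in the unipotent radical of ${\bar P}_{I^c}$. Multiplying these and applying the decomposition $\rho^\vee_G = \rho^\vee_{M_{I^c}} + \rho^\vee_{{\bar P}_{I^c}}$ of the half-sum of positive coroots along the Levi decomposition yields
\[
\lambda(w) = (2\rho^\vee_G)(|w|^{1/2}),
\]
which is manifestly independent of $I$.

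The main obstacle is tracking sign and normalization conventions precisely. Using the opposite parabolic ${\bar P}_{I^c}$ rather than $P_{I^c}$ in Proposition~\ref{proposition:standard} flips the sign of the modulus character, and this must be matched carefully with the convention fixing the principal $\SL_2$ inside $\dualgroup{M}_{I^c}$ and with the twist $\diag(|w|^{1/2}, |w|^{-1/2})$ appearing in the definition of the infinitesimal parameter. Once these bookkeeping identities are aligned, the proof reduces to the decomposition of $\rho^\vee_G$ along the Levi decomposition, which is the combinatorial heart of the argument.
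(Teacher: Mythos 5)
Your proposal is correct and follows essentially the same route as the paper: both invoke Proposition~\ref{proposition:standard} for the standard module of $\sigma_I$ and then cite the Langlands classification for $L$-parameters (Silberger--Zink) to transfer the parameter of the tempered inducing datum $\delta^{1/2}_{\bar P_{I^c}} \otimes \St_{M_{I^c}}$ through $\iota_{\dualgroup{M}_{I^c}}$. Where the paper is terse on the $I$-independence of $\lambda$ --- it simply asserts a $\Sym^d$-type formula --- you make the combinatorial heart of the matter explicit by combining the principal-$\SL_2$ description of $\phi_{\St_{M_{I^c}}}$ with the decomposition $\rho^\vee_G = \rho^\vee_{M_{I^c}} + \rho^\vee_{\bar P_{I^c}}$, which is a welcome clarification. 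Your own caveat about tracking $\rho$ versus $\rho^\vee$ conventions and the sign flip from passing to $\bar P_{I^c}$ is well placed: these are exactly the points one must pin down to see that the twist by $\chi^{1/2}_{\bar P_{I^c}}$ converts the Levi's principal cocharacter into the principal cocharacter of $\dualgroup{G}$.
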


\begin{proof}
The generalised Steinberg representation $\sigma_I$ is of Arthur type only for $I = \emptyset$ and $I = S$.
The Steinberg representation $\sigma_\emptyset = \St_G$ of $G(F)$ has Arthur parameter 
\[
\psi_{\St_G}(w,x,y) = \Sym^{d}(x),
\]
while the trivial representation $\1_G = \sigma_{S}$ of $G(F)$ has Arthur parameter 
\[
\psi_{\1_G}(w,x,y) = \Sym^{d}(y).
\]
(Recall that $\Sym^{d}$ is $d+1$-dimensional representation.)
The infinitesimal parameter of $\St_{M}$ is therefore
\[
\lambda(w) = \Sym^{d}(d_w) \in \dualgroup{T},
\]
where $d_w$ is given by Equation~\ref{def:dw}.
To find the Langlands parameter of $\pi_I$, we recall the standard representation for $\pi_I$ from Proposition~\ref{proposition:standard}, and then use \cite{Silberger}*{\S 7.2 and Introduction, p. 301}.
\end{proof}

\begin{lemma}\label{lemma:enhancement}
    The enhanced Langlands parameter (in the sense of Vogan's version of the local Langlands correspondence) for $\sigma_I$ is $(\phi_I, \1)$ where $\1$ denotes the trivial representation of the component group $\pi_0\left(Z_{\dualgroup{G}}(\phi_I)\right)$.
\end{lemma}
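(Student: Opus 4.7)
The plan is to realise $\sigma_I$ as the Langlands quotient of the standard module computed in Proposition~\ref{proposition:standard} and then transport the trivial enhancement from the tempered inducing datum up to the full component group $A_I$. The enhanced parameter of an irreducible representation under Vogan's correspondence is pinned down by (i) its usual L-parameter and (ii) the compatibility of the correspondence with the Langlands classification, so once the L-parameter is identified we only have to read off an enhancement.

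First, I would invoke Proposition~\ref{proposition:standard}, which exhibits $\sigma_I$ as the unique irreducible quotient of
\[
\Delta(\sigma_I) \;=\; \nInd_{\bar{P}_{I^c}}^G\!\left(\delta^{1/2}_{\bar{P}_{I^c}}\otimes \St_{M_{I^c}}\right).
\]
Thus $\sigma_I$ is presented as a Langlands quotient attached to the triple consisting of the opposite parabolic $\bar{P}_{I^c}$, the tempered representation $\St_{M_{I^c}}$, and the strictly positive unramified twist $\delta^{1/2}_{\bar{P}_{I^c}}$. By Lemma~\ref{lemma:Lparameter}, the L-parameter of this quotient is exactly $\phi_I = \iota_{\dualgroup{M}_{I^c}}\circ\left(\chi^{1/2}_{\bar{P}_{I^c}}\otimes \phi_{\St_{M_{I^c}}}\right)$.

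Second, I would show that the Steinberg representation $\St_{M_{I^c}}$ of the split (reductive) Levi $M_{I^c}$ carries the enhanced parameter $(\phi_{\St_{M_{I^c}}},\1)$. This is the standard fact that the Steinberg corresponds to the principal $\SL_2$ in $\dualgroup{M}_{I^c}$, whose centraliser in $\dualgroup{M}_{I^c}$ is the (disconnected) centre $Z(\dualgroup{M}_{I^c})$; in Vogan's normalisation the Steinberg is labelled by the trivial character of the resulting component group. An unramified twist by $\chi^{1/2}_{\bar{P}_{I^c}}$ does not change the component group on the Levi side or the enhancement.

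Third, I would invoke the compatibility of Vogan's enhanced correspondence with the Langlands classification: the inclusion $\iota_{\dualgroup{M}_{I^c}}\colon \dualgroup{M}_{I^c}\hookrightarrow \dualgroup{G}$ induces a natural homomorphism of component groups
\[
\pi_0\!\left(Z_{\dualgroup{M}_{I^c}}(\phi_{\St_{M_{I^c}}})\right)\;\longrightarrow\; A_I = \pi_0\!\left(Z_{\dualgroup{G}}(\phi_I)\right),
\]
and the Langlands quotient of $\Delta(\sigma_I)$ is matched with the unique irreducible enhancement of $\phi_I$ whose pullback along this map is the enhancement of the inducing datum. Since the latter is $\1$, this forces the enhancement of $\sigma_I$ to be $\1\in\widehat{A_I}$, so that the enhanced Langlands parameter of $\sigma_I$ is $(\phi_I,\1)$, as claimed.

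The main obstacle is bookkeeping the normalisations in Vogan's correspondence: one must verify (a) that the Steinberg of a split reductive group really does sit at the trivial character of its component group, and (b) that the Langlands quotient of a standard module is assigned the enhancement which restricts to the enhancement of the tempered inducing datum (rather than being twisted by a sign character). Both points are standard once a convention has been fixed, but they must be made consistent with the normalisation implicit in the earlier sections of the paper; granted this, the argument above gives the result in a few lines.
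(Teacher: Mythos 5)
Your proposal is correct and is essentially the same argument the paper gives, just fully unpacked: the paper's one-line proof invokes the Whittaker normalization of $L$-packets together with the genericity of the Steinberg representation, which is precisely what you spell out via Proposition~\ref{proposition:standard}, the trivial enhancement of $\St_{M_{I^c}}$, and the compatibility of enhancements with the Langlands classification through the component-group map.
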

\begin{proof}
    This is a consequence of the Whittaker normalization of L-packets, together with the fact that the Steinberg representation is generic.
\end{proof}
We have now established a representation-theoretic foundation for our correspondence, showing that generalised Steinberg representations correspond to a well-behaved family of Langlands parameters collected by a common infinitesimal parameter. Next, we view this foundation through the lens of Vogan's geometric reinterpretation of the local Langlands correspondence. The construction of this perspective mirrors many of the algebraic structures present on the automorphic side of the correspondence, the combinatorics of the extension algebra for generalised Steinberg representations in particular, which will eventually lead to the desired Koszul duality. 

\subsection{"Vogan variety" of Langlands parameters}\label{ssec:Vogan variety}
Rather than considering all Langlands parameters attached to $G(F)$, David Vogan's geometric reimagining of the Langlands correspondences instead considers only those Langlands parameters that share a common infinitesimal parameter $\lambda:W_F \rightarrow {}^L G$ upon pullback to the Weil group by $w \mapsto d_w$ given by Equation~\ref{def:dw}. 
The set $X_\lambda$ of all such parameters  naturally enjoys the structure of an algebraic variety equipped with a $\dualgroup{G}$-action, the same conjugacy action that collects Langlands parameters into the equivalence classes deemed $L$-parameters. 
As such, the Langlands parameters living on a common $\dualgroup{G}$-orbit of $X_\lambda$ share a common $L$-packet, inducing a bijection between the $L$-packets attached to Langlands parameters in $X_\lambda$ and the $\dualgroup{G}$-orbits of this variety. 
As only finitely many $L$-parameters share any given infinitesimal parameter $\lambda$, the variety $X_\lambda$ has only finitely $\dualgroup{G}$-orbits under this action. 
In fact, the simple, equivariant perverse sheaves of the orbit stratification recovers the representation theory of $G(F)$ for irreducible representations living inside of these $L$-packets. 

At first glance, one would hope to find a canonical bijection between the smooth, irreducible representations living in an $L$-packet attached to a Langlands parameter $\phi$ and the irreducible, equivariant local systems appearing on its orbit in $X_\lambda$. However, such a formulation proves too optimistic, as there are generally more isomorphism classes of irreducible, equivariant local systems on the orbit than there are isomorphism classes of irreducible representations in the corresponding packet. This imprecision is resolved by the realisation that these additional local systems correspond not to irreducible representations of $G(F)$ itself, but to irreducible representations of pure inner forms of $G(F)$ that share this same infinitesimal parameter. Using the Kottwitz isomorphism \cite{Kottwitz-iso}*{Proposition 6.4}, these pure inner forms are indexed by isomorphism classes of characters $\pi_0(Z(\dualgroup{G})^{\Gamma_F})\rightarrow \CC$ (in our case $\Gamma_F$ acts trivially). 
Therefore, if we replace the $L$-packet associated to $\phi$ with its \textit{pure $L$-packet}, being the union of all $L$-packets of $\phi$ for each pure inner form of $G(F)$, denoted 
\[
\Pi_\phi^{\text{pure}}(G) := \bigsqcup_{\delta \in H^1(F,G)} \Pi_\phi\left(G_\delta\right), 
\]
where $G_\delta$ is the pure inner form for $\delta$, Vogan's work can be used to give a canonical bijection 
\[
\bigsqcup_{\phi|_{W_F} = \lambda} \Pi_\phi^{\text{pure}}(G) \xrightarrow{\sim} \text{Irr}\left(\Perv_{\dualgroup{G}}(X_\lambda)\right).
\]
For our purposes, however, we wish to understand how the extensions between irreducible admissible representations of $G(F)$ are realised within the geometry of $X_\lambda$, by which we mean the within the category of $\dualgroup{G}$-equivariant perverse sheaves on this variety. To accomplish this, we identify a full Abelian subcategory
\[
\Perv_{\dualgroup{G}}^\circ(X_\lambda) \hookrightarrow \Perv_{\dualgroup{G}}(X_\lambda).
\]
whose isomorphism classes of simple objects are exactly those simple objects in $ \Perv_{\dualgroup{G}}(X_\lambda)$ that correspond to representations of $G(F)$ itself rather than its pure inner forms. That is, we wish to identify such a subcategory so that Vogan's bijection restricts to a bijection 
\[
\bigsqcup_{\phi|_{W_F} = \lambda} \Pi_\phi(G) \xrightarrow{\sim} \text{Irr}\left(\Perv_{\dualgroup{G}}^\circ(X_\lambda)\right)
\]
In due course, we will go through a careful description of this subcategory in the case of Langlands parameters attached to generalised Steinberg representations.
The simple $\dualgroup{G}$-equivariant perverse sheaves on $X_\lambda$ outside of this subcategory also carry spectral data, and correspond to certain irreducible representations of particular pure inner forms of $\dualgroup{G}$. For a complete treatment of this construction, see \cite{Vogan:Langlands}*{\S 4} and \cite{CFMMX}*{Chapter 4}.
\\

While Vogan's construction successfully geometrises the ``Galois side" of the Langlands correspondence, the variety $X_\lambda$ is often rather hard to work with in practice; it is generally disconnected, in particular. To solve this problem, we build a subvariety $V_\lambda \subset \hat{\mathfrak{g}}$, the Lie algebra for $\dualgroup{G}$, that retains all the relevant geometric data (and, therefore, spectral data, under Vogan's philosophy). 
For any infinitesimal parameter $\lambda : W_F \rightarrow \Lgroup{G}$, this subvariety is referred to as the associated \textit{Vogan variety} and is the prehomogeneous vector space
\[
V_\lambda := \{x \in \hat{\mathfrak{g}} \mid \Ad(\lambda(w)) x = |w|_F\ x,\ \forall w\in W_F\},
\]
which comes equipped with the natural conjugation action of the group 
\[
H_\lambda := \{g \in \dualgroup{G} \mid \lambda(w) g = g \lambda(w),\ \forall w \in W_F \}.
\]
The group $H_\lambda$ is reductive, but not necessarily connected.
The $H_\lambda$-orbits in $V_\lambda$ likewise correspond bijectively to the complete list of L-parameters sharing a common infinitesimal parameter $\lambda$. 
Given this description, we can recover our initial parameter variety
\[
X_\lambda = (\dualgroup{G}\times V_\lambda)/H_\lambda,
\]
the quotient space produced by the action $h(g,x) = (gh^{-1}, \Ad(h)x)$.
As promised, there is an equivalence of derived categories given by the usual induction equivalence
\[
D^b_{H_\lambda} (V_\lambda) \simeq D^b_{\dualgroup{G}}(X_\lambda), 
\]
which restricts to the equivalence 
\[
\Perv_{H_\lambda} (V_\lambda) \simeq \Perv_{\dualgroup{G}}(X_\lambda),  \quad \Perv_{H_\lambda}^\circ (V_\lambda) \simeq \Perv_{\dualgroup{G}}^\circ(X_\lambda)
\]
so that we may pick out the simple $H_\lambda$-equivariant perverse sheaves on $V_\lambda$ mapping to the appropriate irreducible representations of $G$ under Vogan's conception of the local Langlands correspondence. In practice, $V_\lambda$ is often easier to work than with than $X_\lambda$ since it is not only a subvariety of the Lie algebra, but a prehomogeneous vector space \cite{Sato-Kimura}*{Definition 2.1}. Both properties of $V_\lambda$ provide numerous tools for calculation not enjoyed by $X_\lambda$, and, as such, it is often useful to perform these calculations on $V_\lambda$ first and proceed to infer the analogous properties on $X_\lambda$ via the equivalence of equivariant derived categories. \\

Fix a Borel $\dualgroup{B} \subset \dualgroup{G}$ and a maximal torus $\dualgroup{T} \subset \dualgroup{B}$, and let $\dualgroup{S}= \{ {\hat \alpha}_1, \ldots, {\hat \alpha}_d\} $ denote the set of simple roots for this choice with a fixed ordering. 
For generalised Steinberg representations, the Vogan variety $V_\lambda$ is the sum of the root spaces attached to the simple roots in $\dualgroup{G}$, for the choice of Borel above: 
\begin{equation}\label{eqn:VtoAd}
V_\lambda = \mathop{\oplus}\limits_{\hat{\alpha} \in \dualgroup{S}}   \mathfrak{\hat g}_{{\hat \alpha}} \iso \mathbb{A}^{d},
\end{equation}
equipped with the action of 
\[
H_\lambda = \dualgroup{T} \iso \mathbb{G}_{\hskip-1pt \text{m}}^d,
\]
given by, for $t\in \hat{T}$ and $(x_1, \dots, x_d) \in \Gmult^d$, 
\[
t\cdot (x_1, \ldots , x_d) = ({\hat \alpha}_1(t) x_1, \ldots , {\hat \alpha}_d(t)x_d). 
\]
Here we have used the fact that $\dualgroup{G}$ is semisimple, since passing to the Langlands dual group preserves semisimplicity.

It is not difficult to see that $H_\lambda$-orbits in $V_\lambda$ are also parametrised by subsets of $S$.
In fact, the variety $V_\lambda$ with this action by $H_\lambda = \dualgroup{T}$ appears in \cite{Vogan:Langlands}*{Example 4.9}, where it is remarked that every $H_\lambda$-orbit $C \subset V_\lambda$ is isomorphic to a variety of the form
    \[
C \cong C_1 \times C_2 \times \dots \times C_d 
    \]
where either $C_i = \{0\}$ or $C_i =\Gmult$, for every $1 \leq i \leq d$.
In particular, there are $2^d$ orbits, with orbit representatives $C_I$, for $I\subseteq \dualgroup{S}$. From now on, we set $C_I^i = \{0\}$ if $\alpha_i \in I$, and $C_I^i = \Gmult$ otherwise.

\begin{lemma}
    If $\phi$ has infinitesimal parameter $\lambda$, then $\phi \iso \phi_I $, for some $I\subseteq S$. 
\end{lemma}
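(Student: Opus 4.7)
The plan is to prove this statement by a counting argument built on Vogan's parametrization. The discussion preceding the lemma identifies the $H_\lambda$-orbits in $V_\lambda$ with subsets of $R^+$, giving exactly $2^d$ orbits $\{C_I : I \subseteq R^+\}$. Via Vogan's correspondence (Section~\ref{ssec:Vogan variety}), these orbits are in bijection with $\dualgroup{G}$-conjugacy classes of Langlands parameters $\phi$ with $\phi|_{W_F} = \lambda$. It therefore suffices to show that the $2^d$ parameters $\phi_I$ constructed in Lemma~\ref{lemma:Lparameter} are pairwise non-conjugate; by cardinality, the map $I \mapsto [\phi_I]$ will then be a bijection onto this set of L-parameters, and the claim follows.

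To prove pairwise non-conjugacy I would match each $\phi_I$ to its orbit in $V_\lambda$ by means of its nilpotent part. Under Vogan's bijection, a parameter $\phi$ with $\phi|_{W_F} = \lambda$ is assigned the nilpotent
\[
x_\phi := d\phi\bspm 0 & 1 \\ 0 & 0 \espm \in V_\lambda,
\]
and two such parameters are $\dualgroup{G}$-conjugate exactly when $x_\phi$ and $x_{\phi'}$ lie in the same $H_\lambda$-orbit (by Jacobson--Morozov, the semisimple datum $\lambda$ together with the nilpotent $x_\phi$ reconstructs $\phi$ up to conjugacy). By Lemma~\ref{lemma:Lparameter}, the restriction of $\phi_I$ to $\SL_2(\C)$ factors through $\dualgroup{M}_{I^c}$ as the principal $\SL_2$-homomorphism, since the character twist $\chi^{1/2}_{{\bar P}_{I^c}}$ is trivial on $\SL_2$. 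Hence $x_{\phi_I}$ is a regular nilpotent element of $\hat{\mathfrak{m}}_{I^c}$; and since the simple roots of $\dualgroup{M}_{I^c}$ are indexed by $I^c$, such an element takes the form $\sum_{\alpha \in I^c} c_\alpha x_{\hat\alpha}$ with all $c_\alpha \neq 0$.

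Under the decomposition \eqref{eqn:VtoAd}, this element has nonzero components precisely at the coordinates indexed by $I^c$ and zero components at those indexed by $I$, which is exactly the defining condition of the orbit $C_I$. Since $I \mapsto C_I$ is a bijection, the family $\{\phi_I\}_{I \subseteq R^+}$ consists of $2^d$ pairwise non-conjugate parameters, and therefore exhausts all equivalence classes with infinitesimal parameter $\lambda$. The only real obstacle is bookkeeping: one must confirm that the parabolic indexing $I \leftrightarrow P_I$ set up in Definition~\ref{definition:gensteinbergs}, the Levi appearing in Proposition~\ref{proposition:standard}, and the orbit labeling $C_I$ chosen in Section~\ref{ssec:Vogan variety} all line up so that the regular nilpotent of $\hat{\mathfrak{m}}_{I^c}$ lands in $C_I$ and not in $C_{I^c}$.
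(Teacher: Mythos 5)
Your proof is correct and takes essentially the same route as the paper: both identify each $\phi_I$ with the $H_\lambda$-orbit $C_I$ via the nilpotent part supported on the root spaces indexed by $I^c$, then invoke the Vogan/CFMMX bijection between $L$-parameters with infinitesimal parameter $\lambda$ and $H_\lambda$-orbits in $V_\lambda$. The only cosmetic difference is that you derive the nilpotent from the explicit form of $\phi_I$ in Lemma~\ref{lemma:Lparameter} and add an explicit cardinality count, while the paper simply writes down the element $x_I = \sum_{\alpha_i \notin I} u_{\hat\alpha_i}$ directly and leans on the stated bijection to conclude.
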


\begin{proof}
Order $\dualgroup{S}$ as above.
Fix non-zero $u_{{\hat \alpha}_i}$ in the root space $ \mathfrak{\hat g}_{{\hat \alpha}_i} \subset \mathfrak{\hat g}$ for ${\hat \alpha}_i$. For every $I\subseteq S$, let $x_I = (x_1, \ldots, x_d)$ be the binary digit for $I^c$, given by 
\[
x_i = \begin{cases} 
0 &  \alpha_i\in I\\
1 & \alpha_i\notin I.
\end{cases}
\]
Set $x_I \ceq \sum_{i=1}^d x_i u_{{\hat \alpha}_i}\in \mathfrak{\hat g}$.
Then the unramified Langlands parameter $\phi_I$ is uniquely determined by the two properties
\[
\phi_I(\Frob,d_{\Frob}) = \lambda(\Frob)
\qquad\text{and}\qquad
\phi_I(1,e) = (\exp(x_I), 1),
\]
where $d_{\text{Fr}} = 
\text{diag}(q^{1/2}, q^{-1/2})$, and 
$e = \left(\begin{smallmatrix} 1 & 1 \\ 0 & 1 \end{smallmatrix}\right)$. 
As in \cite{CFMMX}*{Proposition 4.2}, this establishes a map from Langlands parameters with infinitesimal parameter $\lambda$ to points in $V_\lambda$, which then induces a bijection between $L$-parameters and $H_\lambda$-orbits in $V_\lambda$. 
\end{proof}

Because this context is more amenable to calculation, we will build our framework using equivariant perverse sheaves on the $H_\lambda$-variety $V_\lambda$, eventually translating it back to the setting of the parameter $\dualgroup{G}$-variety $X_\lambda$ via the equivalence of categories stated above.

\section{The principal block of equivariant perverse sheaves}\label{section:PS}

In Section~\ref{ssec:gneralisedSteinberg} we outlined an algebraic description of the extensions of generalised Steinberg representations with a view towards understanding the extension algebra they produce. Now, we turn our attention towards their geometric counterpart in Vogan's framework by constructing the full subcategory $\Perv_{H_\lambda}^\circ(V_\lambda)$ of $\Perv_{H_\lambda}(V_\lambda)$, referred to as the \emph{principal block} of $\Perv_{H_\lambda}(V_\lambda)$.
As discussed in the previous section, this subcategory should be thought of as the full subcategory of equivariant perverse sheaves whose simple intersection cohomology complexes correspond to irreducible representations of the group $G(F)$ itself, rather than its pure inner forms. This subcategory is produced by demonstrating that $H_\lambda$ acts on $V_\lambda$ through an isogeny to another algebraic that, taken in isolation, acts on $V_\lambda$ via an action whose kernel is ``smaller" (in fact trivial in our case) than that of $H_\lambda$. While we give the description solely for the case of generalised Steinberg representations, a similar process should collect the smooth, irreducible representations of the quasi-split form attached to a much more general class of infinitesimal parameters for a much more general class of algebraic groups. 

\subsection{Defining the principal block}\label{subsection:principal block}

\begin{proposition}\label{prop:embedding}
The action of $H_\lambda$ on $V_\lambda \cong \mathbb{A}^d$ factors through an isogeny 
\[
H_\lambda \twoheadrightarrow \Gmult^d
\]
where $\Gmult^d$ acts on $\mathbb{A}^d$ component-wise. This provides an embedding of categories
\[
\Perv_{\Gmult^d}(\mathbb{A}^d) \hookrightarrow \Perv_{H_\lambda}(V_\lambda) .
\]
\end{proposition}

\begin{proof}
Define the group homomorphism $H_\lambda \cong \dualgroup{T} \to \Gmult^d$ given by
\begin{equation}\label{equation:surjection} 
t \mapsto ({\hat \alpha}_1(t), \ldots, {\hat \alpha}_d(t)) 
\end{equation}
This map is equivariant for the actions of the two groups on $V_\lambda$, and surjective, which can be seen by examining the rank of the derivative of $H_\lambda \cong \dualgroup{T} \to \Gmult^d$.
Since $V_\lambda \cong \mathbb{A}^d$ by Equation~\eqref{eqn:VtoAd}, the result follows from Lemma~\ref{lemma:embedding}, below. 
\end{proof}

\begin{definition}\label{definition:principal block}
    We denote the essential image of the functor in Proposition~\ref{prop:embedding} by $\Perv_{H_\lambda}^\circ(V_\lambda)$ and refer to it as the {\it principal block} of the category $\Perv_{H_\lambda}(V_\lambda)$.
\end{definition}
Note that we use different notation for the group $H_\lambda$ and $\Gmult^d$ (resp. varieties $V_\lambda$ and $\mathbb{A}^d$, despite the fact that they are isomorphic to emphasise difference between the actions. However, as we shall soon see, the action of $\Gmult^d$ on $\mathbb{A}^d$ nonetheless produces the same stratification as the usual Vogan variety, with the essential difference being the equivariant fundamental groups which are trivial in this case (the subject of Proposition~\ref{proposition:simpleobjects}, below). 
\begin{remark}
    The isogeny $H_\lambda  \cong \dualgroup{T} \to \Gmult^d$ appearing in the proof of Proposition~\ref{prop:embedding} has kernel of order $2$ if $G$ is of type B or D.
    As the morphism $H_\lambda \to \Gmult^d$ generally does not have connected fibres, the principal block $\Perv_{H_\lambda}^\circ(V_\lambda)$ is a proper subcategory of $\Perv_{H_\lambda}(V_\lambda)$ corresponding to the generalised Steinberg representations of $G$ itself, rather than its pure inner forms. 
\end{remark}

\begin{lemma}\label{lemma:embedding} 
Let $m_0: G_0 \times X \rightarrow X$ and $m_1: G_1 \times X \rightarrow X$ be a pair of actions of the connected algebraic groups $G_0$ and $G_1$ on a variety $X$. Assume there is a surjective morphism of algebraic groups $h: G_1 \rightarrow G_0$ so that the diagram
\[
\begin{tikzcd}[ampersand replacement=\&]
	{G_1\times X} \&\& {G_0\times X} \\
	\& X
	\arrow["{h \times \text{id}_X}", from=1-1, to=1-3]
	\arrow["{m_1}"', from=1-1, to=2-2]
	\arrow["{m_0}", from=1-3, to=2-2]
\end{tikzcd}
\]
commutes. Then, there exists a fully-faithful embedding of categories 
\[
\Perv_{G_0}(X) \hookrightarrow \Perv_{G_1}(X)
\] 
If $h$ is surjective and smooth with connected kernel, then the embedding is an equivalence. 
\end{lemma}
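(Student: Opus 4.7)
The plan is to construct the embedding by pulling the equivariance datum along $h \times \text{id}_X$, then extract full faithfulness from the fact that this morphism is smooth and surjective.

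Recall that an object of $\Perv_{G_i}(X)$ for an action $m_i : G_i \times X \to X$ is a perverse sheaf $\mathcal{F}$ on $X$ equipped with an isomorphism $\theta_i : m_i^*\mathcal{F} \simeq p_X^*\mathcal{F}$ satisfying the standard cocycle and unit conditions. Given $(\mathcal{F}, \theta_0) \in \Perv_{G_0}(X)$, the commuting diagram in the statement yields $m_1 = m_0 \circ (h \times \text{id}_X)$ and $p_X = p_X \circ (h \times \text{id}_X)$, so that pulling $\theta_0$ back along $h \times \text{id}_X$ gives a well-defined isomorphism $\theta_1 : m_1^*\mathcal{F} \simeq p_X^*\mathcal{F}$ on $G_1 \times X$. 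The cocycle and unit conditions for $\theta_1$ follow from pulling the corresponding conditions for $\theta_0$ back along $h \times h \times \text{id}_X$ and along $e_{G_1} \times \text{id}_X$, using that $h$ is a group homomorphism. On morphisms, the functor is defined to act as the identity on the underlying map of perverse sheaves, so faithfulness is automatic.

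For fullness, suppose $f : \mathcal{F} \to \mathcal{F}'$ is a morphism in $\Perv_{G_1}(X)$ between two objects obtained by pullback from $\Perv_{G_0}(X)$. The compatibility of $f$ with $\theta_0, \theta_0'$ is the equality of two maps $m_0^*\mathcal{F} \to p_X^*\mathcal{F}'$ on $G_0 \times X$, and its analogue for $\theta_1, \theta_1'$ is the pullback of this equation along $h \times \text{id}_X$. A surjection of connected algebraic groups in characteristic $0$ is smooth (by generic smoothness plus translation-equivariance), so $h \times \text{id}_X$ is smooth and surjective; hence the shifted pullback is t-exact and faithful on perverse sheaves, and the $G_1$-equivariance equation forces the $G_0$-equivariance equation.

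For the second assertion, assume in addition that $h$ is smooth with connected kernel $K$. Given $(\mathcal{F}, \theta_1) \in \Perv_{G_1}(X)$, its restriction along $K \times X \hookrightarrow G_1 \times X$ records a monodromy action of $K$ on $\mathcal{F}$, where $K$ acts trivially on $X$ because the $G_1$-action factors through $h$. Connectedness of $K$ forces this monodromy to be trivial, and standard smooth descent along $h \times \text{id}_X$ then provides a unique $\theta_0$ on $\mathcal{F}$ with $(h \times \text{id}_X)^*\theta_0 = \theta_1$, yielding essential surjectivity. The main technical point to pin down is the faithfulness of pullback along $h \times \text{id}_X$ on Hom sets of perverse sheaves; this is where smoothness of the surjection is essential, and a careful citation of t-exactness properties of smooth pullback will do the work.
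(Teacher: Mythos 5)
Your construction is the same as the paper's: pull the equivariance isomorphism back along $h \times \text{id}_X$ (reading the diagram the same way the paper does), and descend it when the kernel is connected; the argument is correct. Where you diverge is in the supporting justifications. For full faithfulness the paper does not argue on equivariance data at all: it invokes the fact that for \emph{connected} groups $\Perv_{G_0}(X)$ and $\Perv_{G_1}(X)$ are full subcategories of $\Perv(X)$ (Achar, Proposition 6.2.15), so the functor, being the identity on underlying perverse sheaves, is automatically fully faithful and no smoothness of $h$ is needed in the first part; your route instead descends the compatibility equation using that shifted pullback along a smooth surjection is exact and conservative, hence faithful, which works over $\CC$ but makes you pay for smoothness of $h$ (you correctly note it is automatic in characteristic $0$). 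For essential surjectivity the paper applies $(h\times\text{id}_X)_*$ to the equivariance isomorphism and uses $\pi_*\pi^* \cong \mathrm{id}$ for smooth surjective maps with connected fibers (Achar, Lemma 3.7.3), whereas you argue via triviality of the monodromy of the connected kernel $K$ (which, if you fill it in, amounts to $\Hom_{K\times X}(p_X^*\mathcal{F},p_X^*\mathcal{F})\cong\Hom_X(\mathcal{F},\mathcal{F})$ since $H^0(K)=\CC$) followed by descent of the morphism $\theta_1$ along the smooth cover; these are equivalent standard descent arguments, and yours is sketched at roughly the same level of detail as the paper's, so I see no genuine gap — only that your version of the first assertion quietly uses connectedness of $G_0,G_1$ and smoothness of $h$ where the paper's citation makes the role of connectedness explicit.
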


\begin{proof}
Applying the product functor $(-) \times X$ to $h$, there is a surjective morphism of varieties $(h \times \text{id}_X): G_1 \times X \rightarrow G_0 \times X$. Clearly, we get the factorizations $m_1 = m_0 \circ  (h \times \text{id}_X)$ and $p_1 = p_0 \circ (h \times \text{id}_X) $, where $p_0$ and $p_1$ are the obvious projection maps. Now, recall that a $G_0$-equivariant perverse sheaf on $X$ is a perverse sheaf $\mathcal{F}$ equipped with an isomorphism \[\varepsilon: m_0^* \mathcal{F} \xrightarrow{\sim} p_0^* \mathcal{F}.\] Taking the pullback along $(h \times \text{id}_X)$, we get
\begin{align*}
&(h \times \text{id}_X)^*\varepsilon: m_0^* \mathcal{F} \rightarrow p_0^* \mathcal{F} & \\
& = (h \times \text{id}_X)^*\varepsilon: (h \times \text{id}_X)^* m_0^* \mathcal{F} \rightarrow (h \times \text{id}_X)^* p_0^* \mathcal{F}\\
& = (h \times \text{id}_X)^*\varepsilon: ( m_0 \circ (h \times \text{id}_X))^* \mathcal{F} \rightarrow (p_0 \circ (h \times \text{id}_X))^* \mathcal{F} \\
& = (h \times \text{id}_X)^*\varepsilon: (m_1)^* \mathcal{F} \rightarrow (p_1)^* \mathcal{F} 
\end{align*} 
Since any functor preserves isomorphisms, the pullback \[(h \times \text{id}_X)^*\varepsilon: (m_1)^* \mathcal{F} \xrightarrow{\sim} (p_1)^* \mathcal{F}\] is also an isomorphism, showing that $(\mathcal{F}, (h \times \text{id}_X)^*\varepsilon)$ is a $G_1$-equivariant perverse sheaf on $X$ whenever $(\mathcal{F}, \varepsilon)$ is a $G_0$-equivariant perverse sheaf on the same space. This defines a functor
\[
\Perv_{G_0}(X) \rightarrow \Perv_{G_1}(X); \quad (\mathcal{F}, \varepsilon) \mapsto (\mathcal{F}, (h \times \text{id}_X)^*\varepsilon)
\]
which we can define to be trivial on maps, since both $G_0$ and $G_1$ are assumed to be connected, and hence both $\Perv_{G_0}(X)$ and $\Perv_{G_1}(X)$ are full subcategories of the category of (not-necessarily-equivariant) perverse sheaves $\Perv(X)$ \cite{Achar}*{Proposition 6.2.15}. This means that homsets in both subcategories are the same as those in $\Perv(X)$, implying that our defined functor is fully-faithful, \emph{i.e.}., an embedding. 

Now, assume that $h$ is surjective and smooth with connected kernel. Let $\mathcal{F} \in \Perv_{G_1}(X)$. 
Then, by definition there is an isomorphism 
\[
\epsilon: m_1^* \mathcal{F} \xrightarrow{\sim}  p_1^* \mathcal{F} 
\]
Since $m_1 = m_1 \circ (h \times \text{id}_X)$, we can rewrite this as 
\[
\epsilon: (h\times \text{id}_X)^* m_0^* \mathcal{F} \xrightarrow{\sim} (h \times \text{id}_X)^* p_0^* \mathcal{F} 
\]
The map $(h \times \text{id}_X)$ is also smooth and surjective with connected kernel. 
Hence, by \cite{Achar}*{Lemma~3.7.3}, there are isomorphisms of functors
\[
(h \times \text{id}_X)_*(h \times \text{id}_X)^* m_0^* \cong m_0^* ; \quad \text{ and } (h \times \text{id}_X)_*(h \times \text{id}_X)^* p_0^* \cong p_0^* 
\]
Applying the functor $(h \times \text{id}_X)_*$ to $\epsilon$, we get an isomorphism 
\[
(h \times \text{id}_X)_* \epsilon: m_0^* \mathcal{F} \xrightarrow{\sim}  p_0^* \mathcal{F} , 
\]
showing that $\mathcal{F} \in \Perv_{G_0}(X)$ as well, and so both categories share the same objects. As we saw before, both categories also share homsets and the groups are assumed to be connected, thus implying the stated equivalence. 
\end{proof} 

\subsection{Simple objects in the principal block}\label{ssec:SimpleObjectsPrincipalBlock}

\begin{proposition}\label{proposition:simpleobjects} 
Every simple object in the category $\Perv_{H_\lambda}^\circ(V_\lambda)$ is isomorphic to a perverse sheaf of the form $\IC(\mathbbm{1}_C)$, where $C \subset V_\lambda$ is an $H_\lambda$-orbit. 
\end{proposition}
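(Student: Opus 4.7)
The plan is to transport the classification question through the fully faithful embedding of Proposition~\ref{prop:embedding} to $\Perv_{\Gmult^d}(\mathbb{A}^d)$, where orbits and stabilizers are transparent. By Definition~\ref{definition:principal block}, $\Perv_{H_\lambda}(V_\lambda)^\circ$ is the essential image of this embedding, and since the embedding is fully faithful it preserves and reflects simplicity. Moreover, inspection of the proof of Lemma~\ref{lemma:embedding} shows that the functor leaves the underlying complex of a perverse sheaf untouched and only modifies its equivariance datum by pullback along $h \times \id_X$; in particular, an object of the form $\IC(\mathbbm{1}_C)$ on the $\Gmult^d$-side corresponds to the perverse sheaf $\IC(\mathbbm{1}_C)$ on the $H_\lambda$-side equipped with its induced equivariant structure.

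Next I would classify the simple objects of $\Perv_{\Gmult^d}(\mathbb{A}^d)$ using the standard equivariant recipe: they are the intermediate extensions of irreducible $\Gmult^d$-equivariant local systems supported on individual orbits. By Section~\ref{ssec:Vogan variety} and \eqref{eqn:VtoAd}, the $\Gmult^d$-orbits on $\mathbb{A}^d$ are exactly the loci $C_I$ for $I \subseteq R^+$, each of the form $C_I^1 \times \cdots \times C_I^d$ with $C_I^i \in \{\{0\}, \Gmult\}$, and these coincide with the $H_\lambda$-orbits on $V_\lambda$ since the $H_\lambda$-action factors through the surjection \eqref{equation:surjection}.

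The key computation is then that for each orbit $C_I$ the $\Gmult^d$-stabilizer of a representative point is the connected subgroup $\prod_{\alpha_i \in I} \Gmult \subset \Gmult^d$: the $i$-th factor is free exactly when $\alpha_i \in I$, because the $i$-th coordinate of the representative vanishes, and is constrained to $1$ otherwise. Since the stabilizer is connected, its component group is trivial, and hence the only irreducible $\Gmult^d$-equivariant local system on $C_I$ is $\mathbbm{1}_{C_I}$. The simple objects of $\Perv_{\Gmult^d}(\mathbb{A}^d)$ are therefore exactly the $\IC(\mathbbm{1}_{C_I})$ for $I \subseteq R^+$, and transporting back through the fully faithful embedding yields the claimed description.

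I do not anticipate a substantial obstacle here: the main content is the elementary connectedness of stabilizers, which is forced by the diagonal form of the action. The only delicate point is verifying that the embedding of Proposition~\ref{prop:embedding} really sends $\IC(\mathbbm{1}_{C_I})$ to $\IC(\mathbbm{1}_{C_I})$, but this follows immediately from Lemma~\ref{lemma:embedding}, since both the underlying intersection cohomology complex and the intermediate extension functor are intrinsic to the non-equivariant perverse sheaf and only the equivariance datum is altered by the functor.
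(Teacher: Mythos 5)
Your proposal is correct and follows essentially the same route as the paper: reduce to $\Perv_{\Gmult^d}(\mathbb{A}^d)$ via the definition of the principal block, classify simples as intermediate extensions of irreducible equivariant local systems on orbits, compute that each stabilizer $\mathrm{Stab}_{\Gmult^d}(x)$ is a product of copies of $\Gmult$ and hence connected, and conclude that the equivariant fundamental group is trivial so only constant local systems appear. The paper's proof makes the same computation, citing the equivariant BBD classification and Achar's Proposition 6.2.13 for the bijection with representations of the equivariant fundamental group.
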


\begin{proof} 
By the equivariant version of \cite{BBD}*{Théorème 4.3.1}, the simple objects of $\Perv_{H_\lambda}^\circ(V_\lambda) \simeq \Perv_{\Gmult^d}(\mathbb{A}^d)$ are isomorphic to the perverse sheaves of the form $\IC(\mathcal{L})$ where $\mathcal{L}$ is an irreducible, $\Gmult^d$-equivariant local system on an orbit $C \subset V_\lambda$
 \cite{CFMMX}*{\S~4.6}.
 By \cite{Achar}*{Proposition~6.2.13}, these local systems are in bijection with the irreducible representations of the group 
\[
\pi_1^{\Gmult^d}(C,x) := \text{Stab}_{\Gmult^d}(x)/\text{Stab}_{\Gmult^d}(x)^\circ  
\]
where $x \in C$ is a chosen base point, and $\text{Stab}_{\Gmult^d}(x) ^\circ$ is the identity component of the stabiliser. 
In Section~\ref{ssec:Vogan variety} we saw that if $C \subset V_\lambda$ is any orbit, then it is of the form $C = V_0 \times V_1 \times \dots \times V_n$ where every component $V_i$ is either $\Gmult$ or $\{0\}$. 
Then, if $x = (x_1, \dots, x_n) \in C$ is any point, we have 
\begin{align*}
\text{Stab}_{\Gmult^d}(x) & = \{ g  \in \Gmult^d \mid g \cdot x = x\} \\
& = \{(g_1, \dots, g_d) \in \Gmult^d  \mid (g_1 x_1, \dots, g_d x_d) = (x_1, \dots, x_d) \} 
\end{align*} 
For any $i \in \{1, \dots, d\}$, if $x_i \neq 0$ then $g_i x_i = x_i$ implies $g_i = 1$. On the other hand, if $x_i = 0$, then $g_i$ can be any element of $\Gmult^d $. This implies that the stabiliser of $x$ is of the form 
\[
\text{Stab}_{\Gmult^d }(x) = H_1 \times H_2 \times \dots \times H_d
\]
where $H_i = \Gmult$ or $H_i$ is the trivial group. In any case, this shows that $\text{Stab}_{\Gmult^d }(x)$ is connected, and thus the equivaraint fundamental group  %$\pi_1^{\Gmult^d}(C,x)$ 
and $x$ 
is the trivial group for every orbit $C \subset V_\lambda$. Hence, every orbit has exactly one simple object, up to isomorphism. It remains to show that this isomorphism class is given by the constant sheaf $\mathbbm{1}_C$. But the pullback of any constant sheaf against a smooth surjective map is isomorphic to the constant sheaf for the domain of that map. Since both maps $m, p_1: \Gmult \times C \rightarrow C$ are smooth and surjective, we will always have an isomorphism 
\[
m^* \mathbbm{1}_C \cong p_1^* \mathbbm{1}_C 
\]
Hence, the constant sheaf is always equivariant and we are done. 
\end{proof} 
As we saw in Section~\ref{ssec:Vogan variety}, the orbits the Vogan variety are in bijection with the subsets of simple roots $I \subseteq \dualgroup{S}$, as are the generalised Steinberg representations, by definition (here, we may use the simple roots for either $G$ or $\dualgroup{G}$ since they share the same cardinality). As such, by defining the principal block in this way, we successfully cut out a subcategory of $\Perv_{H_\lambda}(V_\lambda)$ whose simple objects are exactly those perverse sheaves in bijection with generalised Steinberg representations, or, equivalently, those perverse sheaves in bijection with representations of $G(F)$ rather than its pure inner forms.

\begin{example}\label{example:1}
Consider the case where $G = \text{PGL}(2)$, and so $\dualgroup{G} = \SL(2)$. As laid out in \cite{CFMMX}*{Chapter 12}, has two quadratic forms and both are pure. Considering the infinitesimal parameter $\lambda:W_F \rightarrow \dualgroup{G}$ given by 
\[
\lambda(w) = \begin{pmatrix}
|w|^{1/2} & 0 \\
0 & |w|^{-1/2} 
\end{pmatrix}
\]
This parameter produces the Vogan variety given by 
\[
V_\lambda = \left\{\begin{pmatrix} 0 & y \\ 0 & 0 \end{pmatrix} \in \mathfrak{sl}_2 \right\} \cong \mathbb{A}^1 , \qquad H_\lambda  = \left\{ \begin{pmatrix} t & 0 \\ 0 & t^{-1} 
\end{pmatrix} \right\}  \cong \mathbb{G}_m,
\]
though with non-trivial action 
\[
H_\lambda \times V_\lambda \rightarrow V_\lambda; \qquad \begin{pmatrix} t & 0 \\ 0 & t^{-1} 
\end{pmatrix} \cdot \begin{pmatrix} 0 & y \\ 0 & 0 \end{pmatrix} = 
 \begin{pmatrix} 0 & t^2 y \\ 0 & 0 \end{pmatrix}.
\]
This action produces the same orbits $C_0 = \{0\}$ and $C_1 = \Gmult$ as would the usual scaling action, however the quadratic nature of this action produces an equivariant fundamental group
\[
\pi_1^{H_\lambda}(C_1) \cong \ZZ/2\ZZ , 
\]
and trivial equivariant fundamental group on the closed orbit $C_0$. Taken up to isomorphism, the equivariant local systems are $\mathcal{E}$ on $C_1$, the so-called \textit{square-root sheaf}, in addition to the usual trivial local systems on each orbit. The former corresponds to a representation of the non-quasi split form attached to this example, while the latter correspond to representations of $\text{PGL}_2(F)$, where, as usual, $F$ can be any non-Archimedean field. In fact, $\1_{C_0}$ corresponds to the trivial representation of this group while $\1_{C_1}$ corresponds to the Steinberg representation. We can understand the category $\Perv_{H_\lambda}(V_\lambda)$ completely through its stalk table: 
$$\begin{array}{c||c|c}
\mathcal{P} & \mathcal{P}|_{C_0} & \mathcal{P}|_{C_1} \\
\hline\hline
\mathcal{IC}(\mathbf{1}_{C_0}) & \mathbf{1}_{C_0}[0] & 0 \\
\hline
\mathcal{IC}(\mathbf{1}_{C_1}) & \mathbf{1}_{C_0}[1] & \mathbf{1}_{C_1}[1] \\
\hline
\mathcal{IC}(\mathcal{E}) & 0 & \mathcal{E}[1]
\end{array}$$
We can produce the principal block for this category through the isogeny
\[
H_\lambda \to \Gmult ; \qquad t \mapsto t^2,  
\]
and so the action of $H_\lambda$ factors through the usual action by scaling. This produces a block decomposition of the category of $H_\lambda$-equivariant perverse sheaves
\[
\Perv_{H_\lambda}(V_\lambda) = \Perv_{H_\lambda}^\circ(V_\lambda) \oplus \Perv_{H_\lambda}^{-}(V_\lambda),
\]
so that $\Perv_{H_\lambda}^{-}(V_\lambda)$ collects only the perverse sheaf $\IC(\mathcal{E})$. This summand subcategory is thus semisimple, in-fact equivalent to a category of vector spaces, corresponds to the representation of the non-quasisplit form of $\text{PGL}(2)$, while 
\[
\Perv_{H_\lambda}^\circ(V_\lambda) \simeq \Perv_{\Gmult}(\mathbb{A}^1) 
\]
is the principal block for the category and collects $\IC(\1_{C_0})$ and $\IC(\1_{C_1})$, and thus corresponds to representations of $\text{PGL}_2(F)$ itself. This illustrates how the principal block isolates exactly those perverse corresponding to representations of the split form, while representations of pure inner forms are captured by a seperate subcategory. 
\end{example}

\section{Quiver description of the principal block}\label{section:QuiverPrincipalBlock}

To describe the principal block algebraically, we may decompose it into the external tensor of $d$-many copies of a ``base case category", {\it i.e.}, being the category $\Perv_{H_\lambda}^\circ(V_\lambda)$ in the case that $G$ has a unique simple root. The desired algebraic description of the principal block is then achieved by first realising the base case category as the category of modules over the path algebra of a quiver and then using the theory of quivers and path algebras to build the general case for arbitrary $d$. 
Briefly, in this section, we find a $\mathbb{C}$-algebra $E_\lambda$ so that the principal block of the category of $H_\lambda$-equivariant perverse sheaves on $V_\lambda$ are equivalent to the category of finite dimensional modules over $E_\lambda$:
\[
\Perv_{H_\lambda}^\circ(V_\lambda) \simeq \Mod(E_\lambda).
\]
In this case, the principal block has a well known equivalent description in terms of the representations of a bounded quiver. We will employ this description to significant effect, using it to build a general algebraic description of the principal block for generalised Steinberg representations. 
\subsection{The base case category}\label{ssec:toy}
The following well-known result describes the base case of our construction. This base case is exactly the principal block in the example outlined at the end of Section~\ref{section:PS}. We include the proof for completeness and clarity.
\begin{lemma}\label{lemma:quiver}
    Let $\Gmult$ act on the affine line $\mathbb{A}^1$ via multiplication. Then, the category of $\Gmult$-perverse sheaves $\Perv_{\Gmult}(\mathbb{A}^1)$ is equivalent to the category of finite dimensional representations of the quiver 
        \begin{equation}\label{eqn:quiver}
\begin{tikzcd}
	0 && 1
	\arrow["a", curve={height=-12pt}, from=1-1, to=1-3]
	\arrow["b", curve={height=-12pt}, from=1-3, to=1-1]
\end{tikzcd}
\end{equation} 
    subject to the relations $a\circ b= 0$ and $b\circ a = 0$. 
\end{lemma}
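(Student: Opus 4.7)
The plan is to use Beilinson's gluing description of perverse sheaves on $\mathbb{A}^1$ stratified by $\mathbb{A}^1 = \{0\} \sqcup \Gmult$, and then to check that $\Gmult$-equivariance cuts out exactly the representations of $Q$ subject to the relations $a\circ b = 0$ and $b\circ a = 0$.

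First, I would note that any $\Gmult$-equivariant perverse sheaf on $\mathbb{A}^1$ is automatically constructible along the orbit stratification $\mathbb{A}^1 = \{0\}\sqcup \Gmult$. Both orbits have connected stabilizer (all of $\Gmult$ for $\{0\}$, trivial for $\Gmult$), so by the same reasoning as in the proof of Proposition~\ref{proposition:simpleobjects} specialized to $d=1$, there are precisely two simple equivariant perverse sheaves, $\mathcal{S}_0 := i_*\mathbbm{1}_{\{0\}}$ and $\mathcal{S}_1 := \mathbbm{1}_{\mathbb{A}^1}[1]$, matching the two vertices of $Q$.

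Next, I would apply Beilinson's classical presentation, which identifies the category of perverse sheaves on $\mathbb{A}^1$ constructible along this stratification with the category of quadruples $(V_\Psi, V_\Phi, c, v)$ consisting of finite-dimensional vector spaces together with linear maps $c:V_\Psi\to V_\Phi$ and $v:V_\Phi\to V_\Psi$, subject to the constraint that the monodromy operator $T_\Psi = 1-vc$ on $V_\Psi$ (equivalently $T_\Phi = 1-cv$ on $V_\Phi$) is an automorphism; here $V_\Psi$ and $V_\Phi$ record the nearby and vanishing cycles at $0$, respectively.

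The crux of the argument is that $\Gmult$-equivariance forces this monodromy to be trivial. Indeed, a $\Gmult$-equivariant local system on $\Gmult$ (for the multiplication action) must be the constant sheaf, since $\Gmult$ acts on itself simply transitively, so the nearby-cycle monodromy at $0$ is trivial; the coincidence of monodromies on $\Psi$ and $\Phi$ gives the analogous statement for vanishing cycles. This is equivalent to imposing $vc = 0$ and $cv = 0$. Matching $V_\Phi$ with the vector space at vertex $0$ of $Q$, $V_\Psi$ with the vector space at vertex $1$, and $v,c$ with the arrows $a,b$ respectively, one reads off the desired equivalence. I expect the main technical obstacle to be confirming carefully that equivariance in the Beilinson picture really corresponds to the vanishing of \emph{both} compositions (not merely one of them), and pinning down the functoriality of the dictionary between Beilinson quadruples and representations of the quiver so that the equivalence is canonical.
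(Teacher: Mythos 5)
Your strategy is genuinely different from the paper's: the paper invokes L\"orincz--Walther's quiver description of $\Perv_{\Gmult\times\Gmult}(\mathbb{A}^1)$ and then transports it to $\Perv_{\Gmult}(\mathbb{A}^1)$ via the surjection $\Gmult\times\Gmult\to\Gmult$, $(t_1,t_2)\mapsto t_1t_2^{-1}$, which has connected kernel (so Lemma~\ref{lemma:embedding} gives an equivalence). Your Beilinson-gluing approach is a reasonable alternative, but it has a real gap, and you have in fact located it yourself without resolving it.

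The problem is in the step where you pass from $vc=0$ on nearby cycles to $cv=0$ on vanishing cycles by appealing to a ``coincidence of monodromies.'' No such coincidence holds at the level of operators. The operators $T_\Psi=1-vc$ and $T_\Phi=1-cv$ share their nonzero eigenvalues, so $T_\Phi$ is automatically invertible once $T_\Psi$ is, but $T_\Psi=\mathrm{id}$ (i.e.\ $vc=0$) only forces $cv$ to be nilpotent, not zero. A concrete obstruction: take $V_\Psi=\mathbb{C}$, $V_\Phi=\mathbb{C}^2$, $c=\left(\begin{smallmatrix}1\\0\end{smallmatrix}\right)$, $v=(0\ \ 1)$. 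Then $vc=0$ but $cv\neq 0$. This quadruple is a legitimate Beilinson datum (its monodromy is the identity) and gives an indecomposable perverse sheaf of length three on $\mathbb{A}^1$, monodromic with trivial monodromy along $\Gmult$. If ``constant local system on each orbit'' were enough for $\Gmult$-equivariance, this object would lie in $\Perv_{\Gmult}(\mathbb{A}^1)$, and the lemma would be false (the quiver $\mathbb{C}Q/(ab,ba)$ has no length-three indecomposable). So equivariance is strictly stronger than triviality of the nearby-cycle monodromy; the extra information lives at the fixed point $0$, and your argument never touches it.

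To repair this you would need an actual argument that the $\Gmult$-equivariance structure, not merely the resulting local system on the open orbit, forces $cv=0$. One route is to compute $\End_{\Perv_{\Gmult}(\mathbb{A}^1)}(\Delta\oplus\nabla)$ directly and observe it is $4$-dimensional (which already rules out the $5$-dimensional algebra $\mathbb{C}Q/(ab)$ you would get without the second relation); another is to identify $\Perv_{\Gmult}(\mathbb{A}^1)$ with perverse sheaves on the quotient stack $[\mathbb{A}^1/\Gmult]$ and analyze the resulting gluing data there. Either way, the crux step is precisely the one you flagged as ``the main technical obstacle,'' and as written it is not proved.
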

\begin{proof}
This lemma is well known. For example, taking the rank 1 case from \cite{LW}*{Theorem~5.4}, we get an equivalence of categories from the representations of the described quiver and the category $\Perv_{\Gmult \times \Gmult}(\mathbb{A}^1)$, where $\Gmult \times \Gmult$ acts via 
\[
(\Gmult \times \Gmult)  \times \mathbb{A}^1 \rightarrow \mathbb{A}^1 ;\quad  (t_1, t_2)\cdot x =  t_1 x t_2^{-1}. 
\] 
Since $x =  t_1 x t_2^{-1} = (t_1 t_2^{-1}) x$, we get a special case of Proposition~\ref{prop:embedding} using the map of algebraic groups
\[
\Gmult \times \Gmult \rightarrow \Gmult ; \quad (t_1, t_2) \mapsto t_1t_2^{-1}.
\]
In this case, the map is surjective and compatible with both actions, but there is a connected kernel (given by the diagonal case where $t_1 = t_2$ in $\Gmult \times \Gmult$). Thus, here Lemma~\ref{lemma:embedding} provides an equivalence
\[
\Perv_{\Gmult }(\mathbb{A}^1) \simeq \Perv_{\Gmult \times \Gmult}(\mathbb{A}^1),
\]
and, hence, $\Perv_{\Gmult }(\mathbb{A}^1) $ is likewise equivalent to the category finite dimensional representations of this quiver. 
\end{proof} 

\begin{definition}\label{definition:nabla}
With reference to the $\Gmult$-orbit stratification of $\mathbb{A}^1 = \mathbb{G}_\text{m} \sqcup \{ 0\}$, set
\[
        \Delta := j_! \mathbbm{1}_{\mathbb{G}_\text{m}}[1],
    \qquad \text{and}\qquad
        \nabla := j_* \mathbbm{1}_{\mathbb{G}_\text{m}}[1],
\]
for $j : \Gmult \hookrightarrow \mathbb{A}^1$. We denote the endomorphism ring of their direct sum as 
\[
E := \End_{\Perv_{\Gmult}(\mathbb{A}^1)}(\Delta \oplus \nabla).
\]
\end{definition}

\begin{remark}
Note that, since $j$ is smooth, there are isomorphisms of functors ${}^pj_! \cong j_![1]$ and ${}^pj_* \cong j_*[1]$, and therefore both $\Delta$ and $\nabla$ are perverse. These objects will provide a complete list of indecomposable projectives for the category in this case, up to equivalence \cite{LW}*{Theorem 2.13}, and will be central in our construction of the path algebra for the general principal block. 
\end{remark}

\begin{proposition}\label{proposition:endomorphism} 
    The objects $\Delta$ and $\nabla$ are the respective projective covers of the simple objects $\IC(\mathbbm{1}_{\mathbb{G}_\text{m}})$ and $\IC(\mathbbm{1}_0)$. Furthermore, the category of equivariant perverse sheaves $\Perv_{\Gmult}(\mathbb{A}^1)$ is equivalent to the category of finitely generated modules over the endomorphism algebra $E$. 
\end{proposition}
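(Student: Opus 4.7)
The plan is to identify $\Delta$ and $\nabla$ as the two indecomposable projective objects of $\Perv_{\Gmult}(\mathbb{A}^1)$, so that $P := \Delta \oplus \nabla$ is a projective generator; the equivalence $\Perv_{\Gmult}(\mathbb{A}^1) \simeq \mod(A)$ then follows from the standard Morita-style equivalence $X \mapsto \Hom(P, X)$, where $\Hom(P, X)$ is given its natural right $A$-module structure by precomposition, with $A = \End(P)$.

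First, I would compute the composition series of $\Delta$ and $\nabla$ using the recollement triangles for the stratification $\mathbb{A}^1 = \Gmult \sqcup \{0\}$. Writing $i : \{0\} \hookrightarrow \mathbb{A}^1$ for the closed immersion and using that $\mathbbm{1}_{\mathbb{A}^1}[1] = \IC(\mathbbm{1}_{\Gmult})$, the triangles $j_! j^* F \to F \to i_* i^* F$ and $i_* i^! F \to F \to j_* j^* F$ applied to $F = \mathbbm{1}_{\mathbb{A}^1}[1]$ yield, after taking perverse cohomology, the short exact sequences
\[
0 \to \mathbbm{1}_0 \to \Delta \to \IC(\mathbbm{1}_{\Gmult}) \to 0
\qquad \text{and} \qquad
0 \to \IC(\mathbbm{1}_{\Gmult}) \to \nabla \to \mathbbm{1}_0 \to 0.
\]
In particular, $\Delta$ has cosocle $\IC(\mathbbm{1}_{\Gmult})$, $\nabla$ has cosocle $\mathbbm{1}_0 = \IC(\mathbbm{1}_0)$, and both are indecomposable of length two.

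Second, I would verify projectivity. For $\Delta = j_! \mathbbm{1}_{\Gmult}[1]$, the adjunction $j_! \dashv j^*$, the $t$-exactness of $j^*$ for an open immersion, and the fact that $\Perv_{\Gmult}(\Gmult)$ is semisimple (the $\Gmult$-action on itself is free, so this category is equivalent to $\Vect$, with unique simple $\mathbbm{1}_{\Gmult}[1]$) together give that $\Hom(\Delta, -) = \Hom(\mathbbm{1}_{\Gmult}[1], j^*(-))$ is exact, so $\Delta$ is projective and covers $\IC(\mathbbm{1}_{\Gmult})$. For $\nabla$ I would invoke Lemma~\ref{lemma:quiver}: the bound path algebra $\CC Q / \langle ab, ba \rangle$ is basic and four-dimensional with exactly two indecomposable projectives, each of length two and having composition factors in opposite order. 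Matching the composition series computed in Step~1 under the equivalence of Lemma~\ref{lemma:quiver} forces $\nabla$ to be the projective cover of $\IC(\mathbbm{1}_0)$.

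Finally, because every simple of $\Perv_{\Gmult}(\mathbb{A}^1)$ appears as a quotient of $\Delta$ or of $\nabla$, the object $P = \Delta \oplus \nabla$ is a projective generator of this finite-length abelian category; the functor $\Hom(P, -)$ is then exact, faithful, and reflects isomorphisms, and produces the advertised equivalence $\Perv_{\Gmult}(\mathbb{A}^1) \simeq \mod(A)$. The technical point I expect to be the most delicate is the projectivity of $\nabla$: since $\nabla = j_* \mathbbm{1}_{\Gmult}[1]$ is manifestly injective (from $j^* \dashv j_*$), its projectivity admits no direct adjunction proof and is most cleanly extracted by transporting the question through the quiver equivalence of Lemma~\ref{lemma:quiver}, where the bound path algebra visibly turns out to be self-injective so that every indecomposable projective is also injective.
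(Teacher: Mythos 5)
Your proof is correct and follows essentially the same strategy as the paper, whose proof says little more than that one should transport the question to the quiver category of Lemma~\ref{lemma:quiver} (via \cite{LW}) and check projectivity there by linear algebra. Your added detail — the recollement computation of the composition series, the direct $j_! \dashv j^*$ argument for $\Delta$, and the observation that the bound path algebra $\CC Q/\langle ab, ba\rangle$ is self-injective (so $\nabla$, which is manifestly injective, is also projective) — is a more explicit unpacking of the same appeal to the quiver equivalence rather than a different route.
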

\begin{proof}
    Given \cite{LW}*{Theorem 2.13}, it is straightforward to check that $\Delta$ and $\nabla$ correspond to the reducible indecomposable objects in the category of quiver representations, and verifying that they are projective is thus an exercise in linear algebra. 
\end{proof}

\subsection{General case}
For generalised Steinberg representations of $G(F)$ whose set of simple roots $S$ is arbitrarily large, say $|S| = d$, we are now in a position to decompose the corresponding principal block into the external product of $d$ copies of the base case. 
\begin{proposition}\label{proposition:exteriorsheaves}
    There exists an equivalence of categories 
    \[
\Perv_{H_\lambda}^\circ(V_\lambda) \simeq \left(\Perv_{\Gmult}(\mathbb{A}^1)\right)^{\boxtimes d}
    \]
    where $d = |\dualgroup{S}|$ and $ \Perv_{\Gmult}(\mathbb{A}^1)^{\boxtimes d} $ denotes the external tensor of $d$ copies of $\Perv_{\Gmult}(\mathbb{A}^1)$. 
\end{proposition}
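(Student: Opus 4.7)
The plan is to reduce this statement, via Definition~\ref{definition:principal block} and the equivalence in Proposition~\ref{prop:embedding}, to the equivalent assertion
\[
\Perv_{\Gmult^d}(\mathbb{A}^d) \simeq \Perv_{\Gmult}(\mathbb{A}^1)^{\boxtimes d},
\]
where $\Gmult^d$ acts on $\mathbb{A}^d$ componentwise. From here the natural strategy is to exploit the quiver description of the base case $d = 1$ provided by Lemma~\ref{lemma:quiver} and Proposition~\ref{proposition:endomorphism}, which identifies $\Perv_{\Gmult}(\mathbb{A}^1)$ with $\mod(A)$ for the finite-dimensional algebra $A := \End(\Delta \oplus \nabla)$, and then transport the problem to an algebra computation on both sides.

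First I would invoke Deligne's external tensor product of abelian categories, which for categories of modules over finite-dimensional $\CC$-algebras satisfies $\mod(A)^{\boxtimes d} \simeq \mod(A^{\otimes d})$. Hence the right-hand side of the desired equivalence becomes $\mod(A^{\otimes d})$, and the task reduces to producing the same algebra-Morita description on the left. To that end, for each subset $I \subseteq \{1,\dots,d\}$ define
\[
P_I := F_1^I \boxtimes F_2^I \boxtimes \cdots \boxtimes F_d^I, \qquad
F_i^I := \begin{cases} \nabla & \text{if } i \in I, \\ \Delta & \text{if } i \notin I, \end{cases}
\]
and set $P := \bigoplus_{I} P_I$. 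Each $P_I$ is a $\Gmult^d$-equivariant perverse sheaf on $\mathbb{A}^d$, since $\mathbb{A}^1$ is smooth of dimension one and the external tensor of perverse sheaves on smooth factors is perverse on the product (with the cohomological shifts already absorbed into Definition~\ref{definition:nabla}).

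Next I would argue that $P$ is a projective progenerator of $\Perv_{\Gmult^d}(\mathbb{A}^d)$. The simple objects are, by Proposition~\ref{proposition:simpleobjects}, the intersection-cohomology sheaves of the $2^d$ orbits $C_I$ discussed in Section~\ref{ssec:Vogan variety}, and since each such orbit factors as a product of the one-dimensional factor-orbits, the external tensor $P_I$ of factorwise projective covers is itself the projective cover of $\IC(\mathbbm{1}_{C_I})$. A Künneth-type computation then yields
\[
\Hom_{\Perv_{\Gmult^d}(\mathbb{A}^d)}(P_I, P_J) \simeq \bigotimes_{i=1}^{d} \Hom_{\Perv_{\Gmult}(\mathbb{A}^1)}(F_i^I, F_i^J),
\]
so that $\End(P) \simeq A^{\otimes d}$; standard Morita theory then delivers the claimed equivalence.

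The main obstacle is the Künneth computation in the equivariant setting: one must verify that external tensor product is compatible with $\Hom$ at the level of $\Gmult^d$-equivariant perverse sheaves, and that the resulting isomorphism intertwines the algebra structures on both sides. Because each factor $\mathbb{A}^1$ carries only the two-stratum decomposition $\Gmult \sqcup \{0\}$, the cleanest way to handle this is to restrict the putative isomorphism to each product stratum of $\mathbb{A}^d$ and apply proper and smooth base change factorwise, reducing the verification to the elementary $d = 1$ calculation already encoded by $A$ in Proposition~\ref{proposition:endomorphism}.
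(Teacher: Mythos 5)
Your proposal takes a genuinely different route from the paper. The paper simply observes that $\Gmult^d$ acting componentwise on $\mathbb{A}^d$ is the $d$-fold product of $\Gmult$ acting on $\mathbb{A}^1$, and then iterates Lyubashenko's theorem (\cite{VL}*{Main Theorem}), which gives $\Perv_{G_0\times G_1}(V_0\times V_1)\simeq \Perv_{G_0}(V_0)\boxtimes\Perv_{G_1}(V_1)$ directly. You instead aim to build the equivalence by hand via Morita theory: construct the putative projective generator $P=\bigoplus_I P_I$ as an external tensor of $\Delta$'s and $\nabla$'s, compute $\End(P)\cong A^{\otimes d}$ by a K\"unneth argument, and conclude. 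That route, if completed, would effectively reprove the special case of Lyubashenko's theorem needed here, which is interesting but substantially more work.

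There is a real gap, and it is exactly at the step you flag as potentially delicate. You assert that $P_I$ is the projective cover of $\IC(\mathbbm{1}_{C_I})$ in $\Perv_{\Gmult^d}(\mathbb{A}^d)$ ``since each such orbit factors as a product,'' but this does not follow from anything earlier in the section. Projectivity of an external tensor of projectives is a fact about $\mod(A)\boxtimes\mod(B)$ (the paper's Lemma~\ref{lemma:Bourbaki}), and transporting it to $\Perv_{\Gmult^d}(\mathbb{A}^d)$ already presupposes the equivalence $\Perv_{\Gmult^d}(\mathbb{A}^d)\simeq\Perv_{\Gmult}(\mathbb{A}^1)^{\boxtimes d}$ you are trying to prove; invoking Lemma~\ref{lemma:Bourbaki} here is circular. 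To run your argument honestly you would need to prove, from first principles in the sheaf-theoretic setting, both that each $P_I$ is projective in $\Perv_{\Gmult^d}(\mathbb{A}^d)$ (e.g.\ by showing $\Hom(P_I,-)$ is exact, or by producing explicit $P$-resolutions of the simples) and the K\"unneth isomorphism $\Hom(P_I,P_J)\cong\bigotimes_i\Hom(F_i^I,F_i^J)$ at the level of the equivariant derived category, compatibly with composition; the sketch by restriction to product strata and base change is a reasonable plan but is not carried out. Once those are in place, you would also want a result saying that a finite-length abelian category with a projective generator $P$ is equivalent to $\mod(\End(P))$, which is standard. In short, the structure is sound but the load-bearing step is left as an assertion, whereas the paper sidesteps this entirely by citing \cite{VL} and only afterwards deduces projectivity of the $\mathcal{P}_I$ (Proposition~\ref{proposition:projectives}) as a corollary of the equivalence plus Lemma~\ref{lemma:Bourbaki}.
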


\begin{proof}
    We have see that  $\Gmult^d$ acts on $V_\lambda = \mathbb{A}^d$ component-wise, which is equivalent to the product of $d$-copies of $\Gmult$ acting on $d$-copies of $\mathbb{A}^1$.  From \cite{VL}*{Main Theorem}, if $V_0$ is a $G_0$-variety and $V_1$ is a $G_1$ variety, for connected reductive algebraic groups $G_0$ and $G_1$, there is an equivalence of categories 
    \[
\Perv_{G_0 \times G_1}(V_0 \times V_1) \simeq \Perv_{G_0 }(V_0 )\boxtimes \Perv_{G_1 }(V_1).
    \]
    In our case, applying this theorem iteratively obtains
    \[
\Perv_{\Gmult^d}(\mathbb{A}^d) \simeq \Perv_{\Gmult^{\times d}}((\mathbb{A}^1)^{\times d}) \simeq \Perv_{\Gmult}(\mathbb{A}^1)^{\boxtimes d},  
    \]
    which is what we wanted to show.
\end{proof}

\subsection{Simple and projective objects in 
%the category of 
 equivariant perverse sheaves}
\label{ssec:projective}

From the equivalence in Proposition~\ref{proposition:exteriorsheaves}, and its use of \cite{VL}*{Main Theorem}, we see that the image under this equivalence of $\IC(\mathbbm{1}_{C_I})$ and its projective cover in $\Perv_{\Gmult^d}(\mathbb{A}^d)$ can be described as follows.

\begin{definition}\label{definition:ICP}
For any $I\subseteq \dualgroup{S}$, define $\IC_I^i\in \Perv_{\Gmult}(\mathbb{A}^1)$ by
\[
\IC_I^i \ceq
\begin{cases}
\IC(\mathbbm{1}_{0}) & \hat{\alpha}_i \in I  \\
\IC(\mathbbm{1}_{\Gmult}) & \hat{\alpha}_i \notin I,
\end{cases}
\]
and $\IC_I\in \Perv_{\Gmult}(\mathbb{A}^1)^{\boxtimes d}$ by
\[
\IC_I := \IC_I^1 \boxtimes \IC_I^2 \boxtimes \dots \boxtimes \IC_I^d.
%\in \Perv_{\Gmult}(\mathbb{A}^1)^{\boxtimes d}.
\]
Likewise, define $\mathcal{P}_I^i \in \Perv_{\Gmult}(\mathbb{A}^1)$ by
\[
\mathcal{P}_I^i := 
\begin{cases}
\nabla & \hat{\alpha}_i \in I\\
\Delta & \hat{\alpha}_i \notin I.
\end{cases}
\]
and define $\mathcal{P}_I \in \Perv_{\Gmult}(\mathbb{A}^1)^{\boxtimes d}$ by
\[
\mathcal{P}_I := \mathcal{P}_I^1 \boxtimes \mathcal{P}_I^2 \boxtimes \dots \boxtimes \mathcal{P}_I^d.
%\in \Perv_{\Gmult}(\mathbb{A}^1)^{\boxtimes d}. 
\]
\end{definition}

\begin{remark}  
Although matching up $\IC_I$ with with the orbit $C_I$ is not necessary to support our main results, it ensures that the Vogan's conception of the Langlands correspondence can be traced through this equivalence of categories in a natural way. 
Indeed, the category $\Perv_{H_\lambda}(V_\lambda)$ and its equivalent counterparts enjoy many non-trivial autoequivalences that permute isomorphism classes of simple objects in a fashion compatible with the local Langlands correspondence. 
\end{remark}

\begin{proposition}\label{proposition:projectives}
    With reference to Definition~\ref{definition:ICP}, for every $I\subseteq \dualgroup{S}$, the object $\mathcal{P}_I$ of $\Perv_{\Gmult^d}(\mathbb{A}^d)$ is the projective cover of $\IC_I$.
\end{proposition}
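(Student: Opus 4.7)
The plan is to reduce the statement to the one-dimensional case (Proposition~\ref{proposition:endomorphism}) via the external tensor decomposition of Proposition~\ref{proposition:exteriorsheaves}, and then invoke the general fact that the external tensor product of projective covers is a projective cover. By the one-dimensional case, each factor $\mathcal{P}_I^i$ is the projective cover of $\IC_I^i$ in $\Perv_{\Gmult}(\mathbb{A}^1)$. So the content of the proposition is that this behaviour persists under $\boxtimes$.

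First, I would establish that $\mathcal{P}_I$ is projective. Working through the equivalence $\Perv_{\Gmult}(\mathbb{A}^1)^{\boxtimes d}\simeq \mod(A^{\otimes d})$ from the remark following Proposition~\ref{proposition:exteriorsheaves}, the object $\mathcal{P}_I$ corresponds to $P_I^1\otimes\cdots\otimes P_I^d$, where $P_I^i$ is the indecomposable projective $A$-module corresponding to $\mathcal{P}_I^i$. The functor $\Hom_{A^{\otimes d}}(P_I^1\otimes\cdots\otimes P_I^d,-)$ factors as the composition of $\Hom_A(P_I^i,-)$ applied factor by factor, and is therefore exact; hence $\mathcal{P}_I$ is projective.

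Second, I would produce a surjection $\mathcal{P}_I \twoheadrightarrow \IC_I$ with essential kernel. The surjections $\mathcal{P}_I^i \twoheadrightarrow \IC_I^i$ from the base case take external tensor products to yield a map $\mathcal{P}_I\to \IC_I$; since $\boxtimes$ is exact in each variable, this map is surjective. Essentiality reduces, under the algebra identification, to the classical statement that if $L_i = P_i/\operatorname{rad}(P_i)$ then $L_1\otimes\cdots\otimes L_d$ is the head of $P_1\otimes\cdots\otimes P_d$ (equivalently, $\operatorname{rad}(P_1\otimes\cdots\otimes P_d)=\sum_i P_1\otimes\cdots\otimes\operatorname{rad}(P_i)\otimes\cdots\otimes P_d$). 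Combined with projectivity and indecomposability (which follows because each $P_I^i$ has local endomorphism ring, and so does their tensor product), this identifies $\mathcal{P}_I$ with the projective cover of $\IC_I$.

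The only real subtlety is keeping track of the dictionary between the geometric external tensor product of Proposition~\ref{proposition:exteriorsheaves} and the algebraic tensor product of the endomorphism algebras, in particular verifying that the simple objects $\IC_I$ do correspond to simples of the form $L_1\otimes\cdots\otimes L_d$ and that heads commute with $\otimes$ in this setting. Both facts are standard for finite-dimensional algebras over an algebraically closed field, so the main obstacle is notational rather than mathematical; no delicate geometric input beyond Proposition~\ref{proposition:exteriorsheaves} and the one-dimensional classification is required.
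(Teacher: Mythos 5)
Your argument is correct and takes essentially the same route as the paper: reduce to the base case $d=1$ via Proposition~\ref{proposition:endomorphism} and the Deligne tensor decomposition of Proposition~\ref{proposition:exteriorsheaves}, then use the fact that indecomposable projectives and simples in $\mod(A)\boxtimes\mod(B)$ are exactly the external tensors of indecomposable projectives and simples. The paper encapsulates that fact as Lemma~\ref{lemma:Bourbaki} and cites it; you reprove its content inline (projectivity via factored $\Hom$, essentiality via $\operatorname{rad}(A\otimes B)=\operatorname{rad}(A)\otimes B + A\otimes\operatorname{rad}(B)$, indecomposability via locality of the tensor of local endomorphism rings), which is the standard argument behind that lemma.
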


\begin{proof}
Combine Proposition~\ref{proposition:endomorphism} with Lemma~\ref{lemma:Bourbaki}.
\end{proof}

\begin{lemma}\label{lemma:Bourbaki}
    Let $A$ and $B$ be a pair of finite dimensional algebras over an algebraically closed field. Then, the following statements hold:
    \begin{enumerate}
        \item simple objects in the category $\Mod(A) \boxtimes \Mod(B)$ are all of the form $S_A \otimes S_B$, where $S_A$ (resp. $S_B$) is a simple object in the category $\Mod(A)$ (resp. $\Mod(B)$);
        \item indecomposable projective objects in the category $\Mod(A) \boxtimes \Mod(B)$ are all of the form $P_A \otimes P_B$, where $P_A$ (resp. $P_B$) is an indecomposable projective in the category $\Mod(A)$ (resp. $\Mod(B)$).
    \end{enumerate}
\end{lemma}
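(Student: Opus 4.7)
The plan is to invoke the standard identification $\mod(A)\boxtimes\mod(B)\simeq\mod(A\otimes_\CC B)$ already used in the preceding remark, under which the Deligne tensor product $M_A\otimes M_B$ corresponds to the $(A\otimes_\CC B)$-module $M_A\otimes_\CC M_B$ with its two commuting actions. Both assertions then reduce to classical facts about modules over tensor products of finite-dimensional algebras over an algebraically closed field.

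For (1), let $S$ be a simple $(A\otimes B)$-module. I would first argue that $S$ is semisimple when restricted to $A$: the $A$-socle $\mathrm{soc}_A(S)$ is preserved by $B$, because the $A$- and $B$-actions on $S$ commute, so it is a nonzero $(A\otimes B)$-submodule of $S$ and hence equals $S$. Choosing a simple $A$-submodule $S_A\subseteq S$, Schur's lemma over the algebraically closed field $\CC$ gives $\End_A(S_A)=\CC$, and the evaluation map identifies the $S_A$-isotypic component of $S$ with $\Hom_A(S_A,S)\otimes_\CC S_A$. The $B$-action commutes with $A$ and so preserves isotypic components, descending to an action on the multiplicity space $W\ceq\Hom_A(S_A,S)$; by simplicity over $(A\otimes B)$ only one isotypic component is nonzero, and the evaluation isomorphism upgrades to $S\cong W\otimes_\CC S_A$ as an $(A\otimes B)$-module, with $B$ acting on $W$. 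Simplicity of $S$ is then equivalent to simplicity of $W$ as a $B$-module, producing the required decomposition $S\cong S_A\otimes S_B$.

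For (2), tensoring the decompositions $A^n=P_A\oplus Q_A$ and $B^m=P_B\oplus Q_B$ (coming from projectivity of $P_A$ and $P_B$) over $\CC$ exhibits $P_A\otimes_\CC P_B$ as a direct summand of the free $(A\otimes B)$-module $(A\otimes B)^{nm}$, so it is projective. For indecomposability, I would identify $\End_{A\otimes B}(P_A\otimes P_B)\cong \End_A(P_A)\otimes_\CC\End_B(P_B)$, observe that each factor is a local finite-dimensional $\CC$-algebra with residue field $\CC$ (since $P_A$ and $P_B$ are indecomposable over artinian algebras and $\CC$ is algebraically closed), and then note that the tensor product of two such local $\CC$-algebras is again local. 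A finite-length module with local endomorphism ring is indecomposable and is a projective cover of a unique simple; one checks $P_A\otimes P_B$ covers $S_A\otimes S_B$, where $S_A$ and $S_B$ are the simple tops of $P_A$ and $P_B$. Combined with (1) and uniqueness of projective covers, this shows every indecomposable projective of $A\otimes B$ arises in this way.

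The main obstacle will be the isotypic decomposition step in (1): the remaining ingredients reduce to routine applications of adjunctions, exactness, and basic properties of local rings, but producing the clean decomposition $S\cong W\otimes_\CC S_A$ requires carefully transferring the $B$-action onto the multiplicity space and uses algebraic closedness of $\CC$ in an essential way (otherwise $\End_A(S_A)$ may be a nontrivial division algebra and the identification only holds after extension of scalars).
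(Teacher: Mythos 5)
Your proof is correct, and it takes a genuinely different route from the one the authors sketched. For part (1) the paper simply cites a lemma of Bernstein--Zelevinsky, whereas you argue from scratch: you show $\mathrm{soc}_A(S)$ is preserved by the commuting $B$-action and hence equals $S$ by simplicity, then pass to the $S_A$-isotypic component and use Schur's lemma (which is where algebraic closedness enters, just as you flagged) to split $S\cong W\otimes_{\CC}S_A$ with $B$ acting on the multiplicity space $W$. That is the standard Bourbaki-style argument and it is airtight. For part (2) the paper restricts scalars along $f\colon A\to A\otimes B$, identifies $f_!(P_A)=P_A\otimes B$, argues by adjunction that it is projective, and then decomposes $B$ (taken basic) into indecomposable projectives so that each $P_A\otimes P_B(i)$ appears as a summand of a projective; you instead observe directly that $P_A\otimes_\CC P_B$ is a summand of $(A\otimes B)^{nm}$. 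These two routes to projectivity are comparable in length, but you add something the paper's sketch actually omits: a proof that $P_A\otimes P_B$ is \emph{indecomposable}, via $\End_{A\otimes B}(P_A\otimes P_B)\cong\End_A(P_A)\otimes_\CC\End_B(P_B)$ and the fact that a tensor product of two finite-dimensional local $\CC$-algebras with residue field $\CC$ is again local (the radical $J_A\otimes E_B+E_A\otimes J_B$ is nilpotent with quotient $\CC$). The paper passes straight to the bijection between simples and indecomposable projectives without verifying that the $P_A\otimes P_B$ are themselves indecomposable and pairwise non-isomorphic, so your version is strictly more careful. In short: same statement, same Deligne equivalence $\mod(A)\boxtimes\mod(B)\simeq\mod(A\otimes B)$, but your proof is more self-contained for (1) and fills a genuine gap in the paper's argument for (2).
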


\begin{proof}
Statement (1) follows from Lemma from \cite{BZ76}*{\S 2} which adapts results from \cite{Bourbaki}*{\S 12}. Turning our attention to (2), this proof is straightforward, though we could not find it explicitly in the literature so we provide one here for completeness: from \cite{Deligne}*{Proposition~5.4}, there is an equivalence of categories 
\[
\Mod(A) \boxtimes \Mod(B) \simeq \Mod(A \otimes B)
\]
Taking the obvious morphism of algebras 
\[
f: A \rightarrow A \otimes B ; \quad x \mapsto x \otimes 1_B,
\]
if $P_A \in \Mod(A)$ is an indecomposable projective, we extend by scalars to get the $A\otimes B$-module $P_A \otimes B $, written
\[
f_!(P_A) := P_A \otimes B 
\]
From this module, we can use that $f_!$ has left adjoint $f^*$ (restriction of scalars) to get a natural isomorphism of functors valued in vector spaces
\[
\Hom_{A\otimes B}(f_!(P_A), - ) \cong \Hom_A(P_A , f^*(-)) 
\]
Which are both exact, since $P_A$ is projective and restriction of scalars $f^*$ is exact. This implies that $f_!(P_A)$ is projective. Without loss of generality, we can assume that $B$ is a basic algebra\cite{ASS}*{Corollary~1.6.10}, which means that there is a decomposition of $B$ as a $B$-module given by
\[
B \cong \bigoplus_{i} P_B(i) 
\]
where $i$ runs over the set of isomorphism classes of simple objects $S_B(i) \in \Mod(B)$, so that $P_B(i) \rightarrow S_B(i)$ is a projective cover, for every $i$. Hence, we get a decomposition of $A\otimes B$ modules 
\[
P_A \otimes B \cong P_A \otimes \left( \bigoplus_{i} P_B(i) \right) 
\]
and since the tensor product respects direct sums, this can be rewritten as 
\[
P_A \otimes B \cong \bigoplus_i P_A \otimes P_B(i). 
\]
Since any summand of a projective module is projective, $P_A \otimes P_B(i)$ is projective for every $i$. Repeating this process for every projective $P_A \in \Mod(A)$ shows that any $A\otimes B$ module of the form $P_A \otimes P_B$ is projective. Furthermore, the set isomorphism classes of indecomposable projectives is always in bijection with the set of isomorphism classes of simple objects in any category of modules over a finite-dimensional algebra \cite{ASS}*{\S~III.2}. As such, the indecomposable projectives in $\Mod(A\otimes B)$ of this form exhaust the list of indecomposable projectves, up to isomorphism, finishing the proof. 
\end{proof}

\begin{proposition}\label{prop:endalg}
    Category $\Perv_{\Gmult}(\mathbb{A}^1)^{\boxtimes d}$ is equivalent to the category of finitely generated modules over the endomorphism algebra 
    \[
\text{End}_{\Perv_{\Gmult}(\mathbb{A}^1)^{\boxtimes d}}(\mathcal{P}) = \bigoplus_{I,J\subseteq \dualgroup{S}} \text{Hom}_{\Perv_{\Gmult}(\mathbb{A}^1)^{\boxtimes d}}(\mathcal{P}_I, \mathcal{P}_J) , 
\]
where $i$ and $j$ run over all isomorphism classes of indecomposable projectives in the category $\Perv_{\Gmult}(\mathbb{A}^1)^{\boxtimes d} $, and the $\mathcal{P}_I$ and $\mathcal{P}_J$ are choices of representatives from each appropriate class, with $\mathcal{P}$ the direct sum of all such choices. 
\end{proposition}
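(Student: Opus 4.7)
The plan is to chain together the equivalences already assembled in the paper, reducing the claim to a Morita-type argument. First I would invoke Proposition~\ref{proposition:endomorphism}, which gives the base-case equivalence $\Perv_{\Gmult}(\mathbb{A}^1) \simeq \mod(A)$ where $A = \End(\Delta \oplus \nabla)$. Taking external tensor products of $d$ copies and using the standard fact $\mod(A_0) \boxtimes \mod(A_1) \simeq \mod(A_0 \otimes A_1)$ for finite-dimensional algebras (as employed in the remark following Proposition~\ref{proposition:exteriorsheaves}), I obtain
\[
\Perv_{\Gmult}(\mathbb{A}^1)^{\boxtimes d} \simeq \mod(A)^{\boxtimes d} \simeq \mod(A^{\otimes d}).
\]
It then suffices to identify $A^{\otimes d}$ with $\End_{\Perd}(\mathcal{P})$ as algebras.

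For this identification I would use the Künneth-type compatibility of Hom with external tensor product: for $X_i, Y_i \in \Perv_{\Gmult}(\mathbb{A}^1)$ one has
\[
\Hom\bigl(X_1 \boxtimes \cdots \boxtimes X_d,\, Y_1 \boxtimes \cdots \boxtimes Y_d\bigr) \cong \bigotimes_{i=1}^{d} \Hom(X_i, Y_i),
\]
which is an isomorphism of vector spaces compatible with composition. Distributing the external tensor product over the direct sum yields the decomposition $(\Delta \oplus \nabla)^{\boxtimes d} \cong \bigoplus_{I\subseteq R^+} \mathcal{P}_I = \mathcal{P}$, directly from Definition~\ref{definition:ICP}. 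Taking endomorphisms then gives
\[
\End_{\Perd}(\mathcal{P}) \cong \End_{\Perd}\bigl((\Delta \oplus \nabla)^{\boxtimes d}\bigr) \cong A^{\otimes d},
\]
so splicing this into the chain above produces the claimed equivalence.

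The part that requires most care, rather than any deep argument, is verifying that the chain of equivalences transports the distinguished projective generators to one another: the image of $\mathcal{P}$ under $\Perd \simeq \mod(A^{\otimes d})$ should be the regular module $A^{\otimes d}$, so that applying $\Hom_{\Perd}(\mathcal{P},-)$ recovers the Morita equivalence. This is essentially bookkeeping, since by Proposition~\ref{proposition:projectives} together with Lemma~\ref{lemma:Bourbaki} the $\mathcal{P}_I$ are precisely the indecomposable projectives of $\Perd$, and $\mathcal{P}$ is therefore a projective generator. Once this is in place, the standard Gabriel--Freyd theorem for Hom-finite abelian categories with a projective generator furnishes the equivalence $\Perd \simeq \mod(\End_{\Perd}(\mathcal{P}))$ directly, providing either an independent proof or a cross-check of the chain above.
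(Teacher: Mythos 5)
Your proposal is correct and follows essentially the same route as the paper: both decompose $\Perv_{\Gmult}(\mathbb{A}^1)^{\boxtimes d}\simeq\mod(A)^{\boxtimes d}\simeq\mod(A^{\otimes d})$ via Proposition~\ref{proposition:endomorphism} and the Deligne external-tensor fact, identify the $\mathcal{P}_I$ as a complete set of indecomposable projectives via Lemma~\ref{lemma:Bourbaki}, and conclude by Morita equivalence with the endomorphism algebra of the progenerator. The only slight deviation is that you make the algebra isomorphism $\End_{\Perd}(\mathcal{P})\cong A^{\otimes d}$ explicit via the K\"unneth formula for $\Hom$ under $\boxtimes$, whereas the paper leaves this implicit inside the Morita-theoretic appeal to \cite{ASS}; this is a harmless and arguably clarifying refinement rather than a different argument.
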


\begin{proof}
  Proposition~\ref{proposition:endomorphism}, Following \cite{Deligne}*{Proposition 2.14}, the external tensor of finitely many categories of finitely generated modules over finite-dimensional algebras is itself equivalent to a category of modules over a finite-dimensional algebra. 
  From Proposition~\ref{proposition:endomorphism}, each tensored component of $\Perv_{\Gmult}(\mathbb{A}^1)^{\boxtimes d}$ is equivalent to the category of finitely generated modules over the algebra $A$. 
  It follows from \cite{ASS}*{Lemma~I.6.5} that any finite-dimensional algebra is Morita equivalent to the endomorphism algebra of its progenerator, which can be taken to be direct sum of projective covers of simple objects (one from each isomorphism-class) in its category of finitely-generated modules. 
  Item (2) from Proposition~\ref{lemma:Bourbaki} shows that taking $\mathcal{P}_I$ for each $I \subseteq \dualgroup{S}$ gives a complete list of these projectives for $\Perv_{\Gmult}(\mathbb{A}^1)^{\boxtimes d}$.
\end{proof}

\begin{definition}\label{def:Alambda}
%To ease notation below, set
%\[
%\Perv := \Perv_{\Gmult}(\mathbb{A}^1)^{\boxtimes d}
%\]
With reference to Proposition~\ref{prop:endalg}, set
\[
E_\lambda := \text{End}_{\Perv_{\Gmult}(\mathbb{A}^1)^{\boxtimes d}}(\mathcal{P}).
%= \text{End}_{\Perv}(\mathcal{P}).
\]

\end{definition} 

\begin{corollary}\label{cor:firstequivalence}
    There is an equivalence of categories 
    \[
\Perv_{H_\lambda}^\circ(V_\lambda) \simeq \Mod(E_\lambda) 
    \]
\end{corollary}
\begin{proof}
    This follows from Proposition~\ref{prop:endalg}, given the equivalence stated in Proposition~\ref{proposition:exteriorsheaves}.
\end{proof}
The description of the principal block as a category of modules over $E_\lambda$ will be the intermediate step in our comparison between this category and the extensions between generalised Steinberg representations as our strategy is write down an isomorphism between $E_\lambda$ and the extension algebra produced by these representations. To do this, we will compare the graded components of each algebra piece by piece.  

\subsection{Grading}\label{subsection:grading}

Since $E_\lambda$ is a basic, finite-dimensional algebra, it is the path algebra of a quiver, and therefore it carries a natural grading according to path length \cite{ASS}*{\S II.1}. Instead of taking this grading naively, we reconstruct the grading by hand using morphisms between the projective objects of $\Perv_{\Gmult}(\mathbb{A}^1)^{\boxtimes d}$ that sum to a projective generator. This reconstruction is necessary to introduce several algebraic subtleties which will afford us an explicit description of the isomorphism between $E_\lambda$ and the extension algebra on the direct sum of generalised Steinberg representations.

\begin{lemma}\label{lemma:dimhom}
    For any subsets $I,J \subseteq \dualgroup{S}$: 
    \begin{enumerate}
\item 
    $\dim \Hom_{\Perv_{\Gmult}(\mathbb{A}^1)^{\boxtimes d}}(\mathcal{P}_I, \mathcal{P}_J)  = 1$; 
\item 
    any non-trivial morphism $f \in \Hom_{\Perv_{\Gmult}(\mathbb{A}^1)^{\boxtimes d}}(\mathcal{P}_I, \mathcal{P}_J)$ is equivalent to a morphism of the form 
        \[
\tilde{f} = f_1 \otimes \dots \otimes f_{d} 
        \]
        where $f_i \in \Hom_E\left(\mathcal{P}_I^i, \mathcal{P}_J^i\right)$ for every $i\in \{ 1, 2, \ldots , d\}$. 
    \end{enumerate}
\end{lemma}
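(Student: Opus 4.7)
The plan is to reduce both statements to a computation in the base case $\Perv_{\Gmult}(\mathbb{A}^1)$, using the fact that in a Deligne external tensor product of module categories over finite-dimensional algebras the $\Hom$-spaces factor as a tensor product. More precisely, for objects $X = X_1 \boxtimes \cdots \boxtimes X_d$ and $Y = Y_1 \boxtimes \cdots \boxtimes Y_d$ in $\Perv_{\Gmult}(\mathbb{A}^1)^{\boxtimes d}$, I would invoke the standard identification
\[
\Hom_{\Perd}(X, Y) \;\cong\; \bigotimes_{i=1}^{d} \Hom_{\Perv_{\Gmult}(\mathbb{A}^1)}(X_i, Y_i),
\]
which follows from the Morita-type presentation $\Perv_{\Gmult}(\mathbb{A}^1)^{\boxtimes d} \simeq \mod(A^{\otimes d})$ of Proposition~\ref{proposition:endomorphism} together with the remark after Proposition~\ref{proposition:exteriorsheaves}. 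Applying this to $X = \mathcal{P}_I$ and $Y = \mathcal{P}_J$ immediately yields
\[
\Hom_{\Perd}(\mathcal{P}_I, \mathcal{P}_J) \;\cong\; \bigotimes_{i=1}^{d} \Hom_{A}(\mathcal{P}_I^i, \mathcal{P}_J^i),
\]
so that both parts (1) and (2) are reduced to the single-factor case.

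For the base case, I would use the quiver description in Lemma~\ref{lemma:quiver}: the path algebra $A$ of the quiver $0 \rightleftarrows 1$ subject to $a\circ b = 0$ and $b \circ a = 0$ has exactly four indecomposable projective modules up to direct sum decomposition, namely the two indecomposable projectives $\Delta$ (attached to the open orbit $\Gmult$) and $\nabla$ (attached to the closed orbit $\{0\}$), whose identification as projective covers is given in Proposition~\ref{proposition:endomorphism}. A direct inspection of the path algebra shows that each of the four $\Hom$-spaces
\[
\Hom_A(\Delta,\Delta),\quad \Hom_A(\Delta,\nabla),\quad \Hom_A(\nabla,\Delta),\quad \Hom_A(\nabla,\nabla)
\]
is one-dimensional: the endomorphism spaces are spanned by the respective identity idempotents, while the mixed $\Hom$-spaces are spanned by the arrows $a$ and $b$ of the quiver. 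Since each $\mathcal{P}_I^i$ is either $\Delta$ or $\nabla$, every tensor factor $\Hom_A(\mathcal{P}_I^i, \mathcal{P}_J^i)$ is one-dimensional.

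Combining these two ingredients, the tensor product of $d$ one-dimensional spaces is one-dimensional, which proves (1). For (2), any nonzero $f \in \Hom_{\Perd}(\mathcal{P}_I, \mathcal{P}_J)$ is a nonzero scalar multiple of the pure tensor $f_1 \otimes \cdots \otimes f_d$, where $f_i$ is a chosen nonzero generator of the one-dimensional space $\Hom_A(\mathcal{P}_I^i, \mathcal{P}_J^i)$. The only real subtlety is keeping track of the identification in the factorization of $\Hom$-spaces under the external tensor product, especially the comparison between $\boxtimes$ of sheaves and $\otimes$ of modules under the Morita-type equivalence; I expect this bookkeeping, rather than any substantive geometric or homological input, to be the main obstacle.
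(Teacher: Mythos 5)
Your proposal is essentially the same as the paper's proof: both factor $\Hom_{\Perd}(\mathcal{P}_I,\mathcal{P}_J)$ as a tensor product of one-dimensional $\Hom$-spaces in the base case $\Perv_{\Gmult}(\mathbb{A}^1)$, the paper by citing Deligne's Corollary~5.4 directly and you by routing through the $\mod(A^{\otimes d})$ presentation (which amounts to the same thing), and then both read off one-dimensionality of each factor from the quiver of Lemma~\ref{lemma:quiver}. One small slip in wording: there are two indecomposable projectives $\Delta,\nabla$ up to isomorphism, not four, though you clearly mean the four $\Hom$-spaces between ordered pairs of them; the arithmetic is otherwise correct and matches what the paper attributes to the proof of Proposition~\ref{proposition:endomorphism}.
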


\begin{proof}
By \cite{Deligne}*{Corollary~5.4}, and recalling $\mathcal{P}_I = \mathop{\boxtimes}\limits_{i=1}^{d}\mathcal{P}_I^i$ and $\mathcal{P}_J = \mathop{\boxtimes}\limits_{i=1}^{d}\mathcal{P}_J^i$ from Definition~\ref{definition:ICP}, we get an isomorphism of homsets
    \begin{align*}
\Hom_{\Perv_{\Gmult}(\mathbb{A}^1)^{\boxtimes d}}\left( \mathcal{P}_I, \mathcal{P}_J\right) 
& = \Hom_{\Perv_{\Gmult}(\mathbb{A}^1)^{\boxtimes d}}\left(\mathop{\boxtimes}\limits_{i=1}^{d} \mathcal{P}_I^i, \mathop{\boxtimes}\limits_{i=1}^{d} \mathcal{P}_J^i\right)\\
&\cong \mathop{\boxtimes}\limits_{i=1}^{d} \Hom_{E}\left(\mathcal{P}_I^i, \mathcal{P}_J^i\right)
    \end{align*} 
    From this isomorphism, claim $(2)$ follow immediately, and claim $(1)$ follows from the fact that $\dim \Hom_A\left(\mathcal{P}_I^i, \mathcal{P}_J^i\right) = 1$ for every $i$, which we saw in the proof of Proposition~\ref{proposition:endomorphism}. 
\end{proof}

\begin{definition}\label{definition:grading} 
   Using Lemma~\ref{lemma:dimhom}, we see that any non-trivial morphism $f$ in $\Hom_{\Perv}(\mathcal{P}_I, \mathcal{P}_J)$ is equivalent to a morphism that may be written as 
   \[
\tilde{f} = f_{}^1 \otimes f_{}^2 \otimes \dots \otimes f_{}^d. 
   \]
   Given this description, define the \textbf{degree} of $f$ as 
   \[
\text{deg}(f_{}) := \char"0023\{ f^i: \mathcal{P}_I^i \rightarrow \mathcal{P}_J^i \mid f^i \not\cong \text{id}_{\mathcal{P}_I^i}\}.
   \]
   if $f\neq 0$, and set $\deg(f) = 0$ if $f = 0$. 
\end{definition}

\begin{lemma}\label{lemma:degree}
    For $f \in \Hom_{\Perv_{\Gmult}(\mathbb{A}^1)^{\boxtimes d}}(\mathcal{P}_I, \mathcal{P}_J)$, if $ f $ is non-zero then \[
    \deg(f ) = \delta(I,J),
    \]
    the cardinality of the symmetric difference $I \ominus J$. 
\end{lemma}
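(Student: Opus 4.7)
The plan is to reduce the claim to a componentwise count, using the tensor factorization provided by Lemma~\ref{lemma:dimhom}(2). First I would invoke that lemma to replace $f$ by an equivalent morphism $\tilde f = f^1 \otimes \cdots \otimes f^d$ with $f^i \in \Hom_A(\mathcal{P}_I^i, \mathcal{P}_J^i)$; because $f$ is non-zero and $\boxtimes$ is exact on each factor, every $f^i$ is non-zero.

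Next I would analyze each index $i$ by splitting into cases according to whether the source and target agree. By Definition~\ref{definition:ICP}, one has $\mathcal{P}_I^i = \mathcal{P}_J^i$ exactly when $\alpha_i$ belongs to both or to neither of $I$ and $J$. In that situation the hom-space is one-dimensional and spanned by the identity (as recorded in the proof of Proposition~\ref{proposition:endomorphism}), so $f^i$ is a non-zero scalar multiple of $1_{\mathcal{P}_I^i}$; such an $f^i$ is equivalent to $1_{\mathcal{P}_I^i}$ in the sense of Definition~\ref{definition:grading} and therefore does not contribute to $\deg(f)$. If instead $\mathcal{P}_I^i \neq \mathcal{P}_J^i$, then $f^i$ cannot possibly be an identity morphism, so it contributes exactly $1$ to $\deg(f)$.

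To finish, I would identify the set of indices where $\mathcal{P}_I^i \neq \mathcal{P}_J^i$ with the symmetric difference. By Definition~\ref{definition:ICP} the projectives $\mathcal{P}_I^i$ and $\mathcal{P}_J^i$ differ precisely when exactly one of the two conditions $\alpha_i \in I$ or $\alpha_i \in J$ holds, i.e.\ when $\alpha_i \in I \ominus J$. Summing the contributions across $i \in \{1,\dots,d\}$ yields
\[
\deg(f) = |\{i : \mathcal{P}_I^i \neq \mathcal{P}_J^i\}| = |I \ominus J| = \delta(I,J).
\]

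I do not anticipate any real obstacle here: the notion of degree in Definition~\ref{definition:grading} is tailored precisely to this combinatorics. The only point requiring care is that the tensor decomposition of $\tilde f$ is only unique up to individually rescaling the $f^i$ by non-zero scalars with trivial total product, but the criterion $f^i \not\cong 1_{\mathcal{P}_I^i}$ depends only on whether the source and target of $f^i$ agree, which is unaffected by rescaling. Hence $\deg$ is well defined on equivalence classes of non-zero morphisms and the above count gives the desired identity.
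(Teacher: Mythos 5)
Your proof is correct and follows essentially the same route as the paper's: decompose $f$ via Lemma~\ref{lemma:dimhom}, observe componentwise that $f^i$ fails to be (a scalar multiple of) the identity exactly when $\mathcal{P}_I^i \neq \mathcal{P}_J^i$, and identify that index set with $I \ominus J$. Your closing remark on the well-definedness of $\deg$ under componentwise rescaling is a genuine precision the paper leaves implicit, but it does not change the substance of the argument.
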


\begin{proof}
    If $f^i = \text{id}_{\mathcal{P}_I^i}$ for every $i$, the result is clear. Otherwise, for every $i$ such that $f^i \neq \text{id}_{\mathcal{P}_I^i}$, it follows that the domain and codomain of $f^i$ are not isomorphic. This is to say that either $\hat{\alpha}_i \in  I$ and $\hat{\alpha}_i \notin J$, or visa versa. But this means that $\hat{\alpha}_i \in  I \cup J$ but $\hat{\alpha}_i \notin I \cap J$, or equivalently $\hat{\alpha}_i \in  I \ominus K$, and so $\deg(f) \leq \delta(I,J)$. Conversely, if $\hat{\alpha}_i \in  I \ominus J$, then domain and codomain of $f^i$ are non-isomorphic, by definition, implying $\delta(I,J) \leq \deg(f )$. 
\end{proof}

This grading is essential for two reason: for one, it is a precedent to understand $E_\lambda$ as a Koszul algebra, the content of Section~\ref{ssec:koszulalgebra}, and secondly, as previously stated, it provides the bridge to the extension algebra for generalised Steinberg representations, whose cohomological grading is likewise governed by the symmetric different of subsets of simple roots. 

\begin{lemma}\label{lemma:settheory}
    For subsets $I,J, K \subseteq \dualgroup{S}$,  
    \[
I \ominus K \subseteq (I \ominus J)  \cup (J \ominus K)  
    \]
\end{lemma}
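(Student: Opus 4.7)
The statement is the triangle inequality for symmetric differences, and I expect a short elementwise argument to suffice. The plan is to take an arbitrary element $x \in I \ominus K$ and show, by exhausting the possibilities for whether $x$ lies in $J$ or not, that $x$ must lie in either $I \ominus J$ or $J \ominus K$.

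More precisely, I would first recall that $x \in I \ominus K$ means $x$ lies in exactly one of $I$ and $K$. Without loss of generality (by symmetry in $I$ and $K$, since $I \ominus K = K \ominus I$), assume $x \in I$ and $x \notin K$. Now split on whether $x \in J$: if $x \in J$, then since $x \notin K$ we have $x \in J \setminus K \subseteq J \ominus K$; if $x \notin J$, then since $x \in I$ we have $x \in I \setminus J \subseteq I \ominus J$. Either way, $x \in (I \ominus J) \cup (J \ominus K)$, which proves the containment.

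A slicker alternative, which I might mention as a remark but not use as the main proof, is to identify subsets of $R^+$ with their indicator functions in $\mathbb{F}_2^{R^+}$, under which symmetric difference corresponds to addition: then $\mathbf{1}_{I \ominus K} = \mathbf{1}_I + \mathbf{1}_K = (\mathbf{1}_I + \mathbf{1}_J) + (\mathbf{1}_J + \mathbf{1}_K) = \mathbf{1}_{(I \ominus J) \ominus (J \ominus K)}$, so $I \ominus K = (I \ominus J) \ominus (J \ominus K) \subseteq (I \ominus J) \cup (J \ominus K)$ since in general $A \ominus B \subseteq A \cup B$.

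There is no real obstacle here; this is purely combinatorial bookkeeping whose purpose in the paper is presumably to feed into a subadditivity statement for the degree function of Definition~\ref{definition:grading}, namely $\deg(g \circ f) \leq \deg(f) + \deg(g)$ for composable morphisms between the projectives $\mathcal{P}_I$, $\mathcal{P}_J$, $\mathcal{P}_K$, via Lemma~\ref{lemma:degree}.
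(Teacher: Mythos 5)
Your main argument is correct and is essentially identical to the paper's proof: both take an element of $I \ominus K$, assume without loss of generality that it lies in $I$ but not $K$, and split on membership in $J$. The $\mathbb{F}_2$-indicator-function remark is a nice addition but not used in the paper.
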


\begin{proof}
    Suppose $\hat{\alpha}_i \in  I \ominus K$, and $\hat{\alpha}_i \in  I$ in particular. Then, $\hat{\alpha}_i \notin K$. If $\hat{\alpha}_i \in  J$, then $\hat{\alpha}_i \in  J \ominus K$. If $\hat{\alpha}_i \notin J$, then $\hat{\alpha}_i \in  I \ominus J $. Either way, $\hat{\alpha}_i \in ( I \ominus J ) \cup (J \ominus K)$. 
\end{proof}

\begin{proposition}\label{proposition:hompairing}
    For subset $I,J,K \subseteq S$, 
    the pairing
    \[
\Hom_{\Perv_{\Gmult}(\mathbb{A}^1)^{\boxtimes d}}^{\delta(I,J)}(\mathcal{P}_I, \mathcal{P}_J) \otimes \Hom_{\Perv_{\Gmult}(\mathbb{A}^1)^{\boxtimes d}}^{\delta(J,K)}(\mathcal{P}_J, \mathcal{P}_K) \rightarrow \Hom_{\Perv_{\Gmult}(\mathbb{A}^1)^{\boxtimes d}}^{\delta(I,K)}(\mathcal{P}_I, \mathcal{P}_K)
    \]
   given by $(f_{IJ} , f_{JK} ) \mapsto f_{JK} \circ f_{IJ}$ is an isomorphism if and only if $\delta(I,J) + \delta(J, K) = \delta(I,K)$. Otherwise, the pairing is the zero map. 
\end{proposition}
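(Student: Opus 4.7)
The plan is to reduce the pairing to a componentwise calculation using the external tensor product decomposition of Proposition~\ref{proposition:exteriorsheaves} together with Lemma~\ref{lemma:dimhom}, and then to match the resulting nonvanishing criterion against the cardinality identity for symmetric differences.

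First, I would fix nonzero representatives $f_{IJ} \in \Hom_{\Perd}(\mathcal{P}_I, \mathcal{P}_J)$ and $f_{JK} \in \Hom_{\Perd}(\mathcal{P}_J, \mathcal{P}_K)$. By Lemma~\ref{lemma:dimhom}(2), these factor as external tensor products
\[
f_{IJ} = f_{IJ}^1 \boxtimes \cdots \boxtimes f_{IJ}^d, \qquad f_{JK} = f_{JK}^1 \boxtimes \cdots \boxtimes f_{JK}^d,
\]
with each $f_{IJ}^i, f_{JK}^i$ a morphism between indecomposable projectives of the base category $\Perv_{\Gmult}(\mathbb{A}^1)$. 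Since composition is compatible with the external tensor product,
\[
f_{JK} \circ f_{IJ} = (f_{JK}^1 \circ f_{IJ}^1) \boxtimes \cdots \boxtimes (f_{JK}^d \circ f_{IJ}^d),
\]
so the composite vanishes if and only if at least one factor does.

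Next, I would analyze each factor using the quiver description in Lemma~\ref{lemma:quiver}. In the path algebra subject to the relations $a \circ b = 0$ and $b \circ a = 0$, a composition of two morphisms between $\{\Delta, \nabla\}$ is nonzero precisely when one of the two is a scalar multiple of an identity morphism. By Definition~\ref{definition:ICP}, the factor $f_{IJ}^i$ is between distinct objects (hence not a scalar of the identity) exactly when $\alpha_i \in I \ominus J$, and similarly for $f_{JK}^i$. Therefore $f_{JK}^i \circ f_{IJ}^i = 0$ if and only if $\alpha_i \in (I \ominus J) \cap (J \ominus K)$, and consequently $f_{JK} \circ f_{IJ}$ is nonzero if and only if $(I \ominus J) \cap (J \ominus K) = \emptyset$. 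I would then match this against the degree hypothesis: using associativity of symmetric difference together with $X \ominus X = \emptyset$, we have $I \ominus K = (I \ominus J) \ominus (J \ominus K)$, whence
\[
\delta(I,K) = \delta(I,J) + \delta(J,K) - 2\,\bigl|(I \ominus J) \cap (J \ominus K)\bigr|.
\]
Thus the degrees add if and only if $(I \ominus J) \cap (J \ominus K) = \emptyset$, which is exactly the criterion for the composition to be nonzero.

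Finally, by Lemma~\ref{lemma:dimhom}(1) both the source and the target of the pairing are one-dimensional, so any nonzero linear map between them is an isomorphism; combined with the dichotomy above, this gives both directions of the statement. The only real obstacle is the base-case claim that the composition of two non-identity morphisms between $\Delta$ and $\nabla$ vanishes in $\Perv_{\Gmult}(\mathbb{A}^1)$, and this is immediate from the relations $ab = ba = 0$ in the quiver presentation of Lemma~\ref{lemma:quiver}. Everything else is bookkeeping with symmetric differences and the componentwise tensor structure.
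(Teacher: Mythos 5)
Your proof is correct, and it takes a cleaner and more unified route than the paper's. The paper's proof splits into three separate arguments (isomorphism implies the set-union identity $(I\ominus J)\cup(J\ominus K) = I\ominus K$ via a case analysis over $\alpha_i\in I$, $\alpha_i\in J$, $\alpha_i\in K$; the additive degree hypothesis implies nonvanishing by contradiction; and the non-additive case implies a vanishing component), leaning on the one-sided containment of Lemma~\ref{lemma:settheory}. You instead observe that $f_{JK}^i\circ f_{IJ}^i=0$ exactly when $\alpha_i\in(I\ominus J)\cap(J\ominus K)$, and pair this with the \emph{exact} symmetric-difference identity $I\ominus K=(I\ominus J)\ominus(J\ominus K)$, yielding
\[
\delta(I,K)=\delta(I,J)+\delta(J,K)-2\,\bigl|(I\ominus J)\cap(J\ominus K)\bigr|,
\]
which reduces both the vanishing criterion and the degree-additivity condition to emptiness of a single intersection. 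This replaces the paper's case analysis and contradiction argument with one chain of equivalences and avoids any need for the auxiliary union containment; it also makes plainer why the composite, when nonzero, lands in precisely the $\delta(I,K)$-graded piece. The only substantive input you rely on that is not highlighted in the paper's proof is associativity of $\ominus$ together with $J\ominus J=\emptyset$, which is elementary; everything else (componentwise composition, the quiver relations $ab=ba=0$, one-dimensionality of the homsets from Lemma~\ref{lemma:dimhom}) matches the paper's setup.
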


\begin{proof}
We abuse the notation introduced in Lemma~\ref{lemma:quiver}, so that $a: \Delta \rightarrow \nabla$ and $b: \nabla \rightarrow \Delta$ are the unique, non-trivial morphisms between the given indecomposable projectives in $ \Perv_{\Gmult}(\mathbb{A}_1)$. 
 By Lemma~\ref{lemma:dimhom}, all homsets between indecomposable projectives in $\Perv_{\Gmult}(\mathbb{A}^1)^{\boxtimes d}$ are one dimensional, and so take $ f_{IJ} \in \Hom_{\Perv_{\Gmult}(\mathbb{A}^1)^{\boxtimes d}}^{\delta(I,J)}(\mathcal{P}_I, \mathcal{P}_J)$ and $f_{JK} \in \Hom_{\Perv_{\Gmult}(\mathbb{A}^1)^{\boxtimes d}}^{\delta(J,K)}(\mathcal{P}_J, \mathcal{P}_K)$ be generators. 
 In the first direction, assume that the pairing is an isomorphism. Then $f_{JK} \circ f_{JK} \neq 0$ and this map can be taken as a generator for $\Hom_{\Perv_{\Gmult}(\mathbb{A}^1)^{\boxtimes d}}^{\delta(I,K)}(\mathcal{P}_I, \mathcal{P}_K)$. If this is the case, then every tensored component
    \[
(f_{JK} \circ f_{IJ} )^i = f_{JK}^i \circ f_{IJ}^i
    \]
    likewise nonzero. To prove this direction, it suffices to show that 
    \[
(I \ominus J)  \cup (J \ominus K) = I \ominus K
    \]
whenever this tensored component $ f_{JK}^i \circ f_{IJ}^i$ is non-zero. First, suppose $\hat{\alpha}_i \in  (I\ominus J) \cup (J \ominus K)$. There are three cases:
    \begin{enumerate}
        \item $(\hat{\alpha}_i \in  I)$: Then, $\hat{\alpha}_i \in  I\ominus J$ and so $\hat{\alpha}_i \notin J$. This implies that $\mathcal{P}_I^i = \nabla$ and $\mathcal{P}_J^i = \Delta$ and so $f_{IJ}^i = b$. Since $ f_{JK}^i \circ f_{IJ}^i \neq 0$, it follows that $ f_{JK}^i \circ f_{IJ}^i = b$ as well, and so $f_{JK}^i = \text{id}_\Delta$, from which it follows that $\hat{\alpha}_i \notin K$ and thus $\hat{\alpha}_i \in  I\ominus K$. 
        \item $(\hat{\alpha}_i \in  J):$ In this case, $\hat{\alpha}_i \notin I$, so $f_{IJ}^i = a$. Similar to the above, this implies that $ f_{JK}^i \circ f_{IJ}^i = a$ and so $f_{JK}^i = \text{id}_\Delta$. Thus $\hat{\alpha}_i \in  K$ and, since $\hat{\alpha}_i \notin I$, it follows that $\hat{\alpha}_i \in  I\ominus K $. 
        \item $(\hat{\alpha}_i \in  K)$: Then, $\hat{\alpha}_i \notin J$, implying that $f_{JK} = b$. Since $ f_{JK}^i \circ f_{IJ}^i \neq 0$, it can only be the case that $f_{IJ}^i= \text{id}_\Delta$, and so $\hat{\alpha}_i \notin I$. Thus, $\hat{\alpha}_i \in  I \ominus K$. 
    \end{enumerate}
    
    This implies that $(I\ominus J) \cup (J\ominus K) \subseteq I \ominus K$, and the opposite inclusion was proved in Lemma~\ref{lemma:settheory}.  
    
   Conversely, suppose that $\delta(I,J) + \delta(J,K) = \delta(I,K)$. From Lemma~\ref{lemma:settheory}, it must be the case that 
    \begin{equation}\label{equation:equalsets}
(I \ominus J)  \cup (J \ominus K) = I \ominus K 
    \end{equation} 
    Consider $f_{JK} \circ f_{IJ} $. 
    For the purpose of deriving a contradiction, assume there exists an $i$ so that $ f_{JK}^i \circ f_{IJ}^i=0$. 
    Then, $f_{IJ}^i \neq \text{id}_{\mathcal{P}_I^i}$ and $f_{JK}^i \neq \text{id}_{\mathcal{P}_J^i}$ ({\emph{i.e.}}, either $f_{JK} \circ f_{IJ} = a\circ b$ or $ f_{JK}^i \circ f_{IJ}^i = b\circ a$. 
    But then $ \hat{\alpha}_i \in  I \ominus J $ and $\hat{\alpha}_i \in  J \ominus K $. If $\hat{\alpha}_i \in  I$, then $ \hat{\alpha}_i \notin J$, implying that $\hat{\alpha}_i \in  K$. 
    On the other hand, if $\hat{\alpha}_i \notin I$, then $\hat{\alpha}_i \in  J$, and so $\hat{\alpha}_i \notin K$. 
    In both cases, $\hat{\alpha}_i \notin I \ominus K$, contradicting Equation~\ref{equation:equalsets}. 
    It must then be the case that $ f_{JK}^i \circ f_{IJ}^i\neq 0$ for every $i$ and we get $ f_{JK} \circ f_{IJ} \neq 0$. 

    Finally, suppose that $\delta(I,J) + \delta(J,K) \neq \delta(I,K)$. 
    Again referring to Lemma~\ref{lemma:settheory}, this can only happen if there exists some $\hat{\alpha}_i \in  (I \ominus J)  \cup (J \ominus K)$ so that $\hat{\alpha}_i \notin I \ominus K$ by Lemma~\ref{lemma:degree}. 
    It follows that $\hat{\alpha}_i \in  I$ and $\hat{\alpha}_i \in  K$, but $\hat{\alpha}_i \notin J$, which, by definition, implies that $f_{IJ}^i = b$ and $f_{JK}^i = a$. Hence,
    \[
 f_{JK}^i \circ f_{IJ}^i = a \circ b = 0.
    \]
    Since one of the tensored components of $ f_{JK} \circ f_{IJ}$ is zero, the entire morphism must also be the zero map and we are done. 
\end{proof}

\begin{corollary}
Definition~\ref{definition:grading} defines a grading on the algebra $E_\lambda$. 
\end{corollary}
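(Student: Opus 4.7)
The plan is to verify that the degree function of Definition~\ref{definition:grading} yields a decomposition
\[
A_\lambda = \bigoplus_{n \geq 0} A_\lambda^n,
\qquad
A_\lambda^n := \bigoplus_{\substack{I,J \subseteq R^+ \\ \delta(I,J) = n}} \Hom_{\Perd}(\mathcal{P}_I, \mathcal{P}_J),
\]
that is compatible with composition in the sense that $A_\lambda^m \cdot A_\lambda^n \subseteq A_\lambda^{m+n}$.

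First, I would note that the block decomposition
\[
A_\lambda = \End_{\Perd}(\mathcal{P}) = \bigoplus_{I,J \subseteq R^+} \Hom_{\Perd}(\mathcal{P}_I, \mathcal{P}_J)
\]
from Proposition~\ref{prop:endalg} already expresses $A_\lambda$ as a direct sum over pairs $(I,J)$. Since each summand is one-dimensional by Lemma~\ref{lemma:dimhom}(1), any nonzero element of $\Hom_{\Perd}(\mathcal{P}_I,\mathcal{P}_J)$ is a scalar multiple of the canonical generator $\tilde f = f^1 \otimes \cdots \otimes f^d$, and Lemma~\ref{lemma:degree} shows that this generator has a well-defined degree equal to $\delta(I,J)$. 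Regrouping the blocks by the value of $\delta(I,J)$ gives the direct sum decomposition above, with $n$ ranging from $0$ up to $d = |R^+|$.

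Next, I would verify the multiplicative compatibility. Take homogeneous elements $f \in \Hom_{\Perd}(\mathcal{P}_I, \mathcal{P}_J) \subseteq A_\lambda^m$ with $m = \delta(I,J)$ and $g \in \Hom_{\Perd}(\mathcal{P}_{J'}, \mathcal{P}_K) \subseteq A_\lambda^n$ with $n = \delta(J',K)$. If $J \neq J'$, then $g \circ f = 0$, which lies in $A_\lambda^{m+n}$ trivially. If $J = J'$, then Proposition~\ref{proposition:hompairing} splits into two cases: either $\delta(I,J) + \delta(J,K) = \delta(I,K)$, in which case $g \circ f$ is a nonzero element of $\Hom_{\Perd}(\mathcal{P}_I, \mathcal{P}_K) \subseteq A_\lambda^{\delta(I,K)} = A_\lambda^{m+n}$, or else $g \circ f = 0 \in A_\lambda^{m+n}$. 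In every case $g \circ f \in A_\lambda^{m+n}$, and by bilinearity this extends to arbitrary elements of $A_\lambda^m$ and $A_\lambda^n$.

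Finally, the identity $1_{\mathcal{P}} = \sum_I \mathrm{id}_{\mathcal{P}_I}$ sits in $A_\lambda^0$ since each $\mathrm{id}_{\mathcal{P}_I}$ has degree $\delta(I,I) = 0$. Hence the assignment $f \mapsto \deg(f)$ defines a nonnegative $\mathbb{Z}$-grading on $A_\lambda$. The only subtle point is that the degree is \emph{a priori} attached to a specific generator of each one-dimensional homset, so the genuine work has already been carried out in Proposition~\ref{proposition:hompairing}; the present statement is a bookkeeping consequence and I do not anticipate any substantive obstacle beyond re-indexing the block decomposition by $\delta$.
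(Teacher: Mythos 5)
Your proposal is correct and takes essentially the same approach as the paper: both decompose $A_\lambda = \bigoplus_{I,J}\Hom_{\Perd}(\mathcal{P}_I,\mathcal{P}_J)$, regroup homogeneous pieces by $\delta(I,J)$, and invoke Proposition~\ref{proposition:hompairing} to confirm that composition either lands in the correct graded piece or vanishes. Your write-up is somewhat more explicit — spelling out the $J \neq J'$ case and the degree-zero identity — but there is no substantive difference.
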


\begin{proof}
Any morphism $f \in E_\lambda := \End_{\Perv_{\Gmult}(\mathbb{A}^1)^{\boxtimes d}}(\mathcal{P} )$ is the direct sum of a morphism of the form $f_{IJ} \in \Hom_{\Perv_{\Gmult}(\mathbb{A}^1)^{\boxtimes d}}(\mathcal{P}_I, \mathcal{P}_J)$, for $I,J \subseteq S$. 
Moreover, from Proposition~\ref{proposition:hompairing}, morphisms $f_{IJ}: \mathcal{P}_I \rightarrow \mathcal{P}_J$ and $f_{JK} :\mathcal{P}_{J} \rightarrow \mathcal{P}_J$ can be composed so that either 
\[
\deg(f_{IJ}) + \deg(f_{IK}) = \deg(f_{JK} \circ f_{IJ} ) 
\]
or $ f_{JK} \circ f_{IJ} = 0$. Either way, the composition occurs in the appropriate graded piece of the algebra. 
\end{proof}

Our ultimate goal is to produce a Koszul duality between extensions of gnerelised Steinberg representations of $G(F)$ and the extensions of perverse sheaves that serve as their geometric counterpart under the Langlands correspondence. In this regard, we briefly characterise Koszul duality, generally, and demonstrate that $E_\lambda$ is Koszul, as this algebra will serve as a bridge between either side of the Langlands correspondence. 

\subsection{Koszul Algebras and Koszul duality}\label{ssec:koszulalgebra} 

Recall that a \textit{Koszul algebra} is a positively graded algebra $A = \bigoplus_{i \geq 0} A_i$ where the degree zero component $A_0$ has a linear projective resolution. Koszul algebras are quadratic, meaning as a path algebra their relations are generated by a subset of degree two components. Such algebras enjoy a natural duality, which may be demonstrated using a convenient combinatorial gadget in the case that the algebra is the bounded path algebra of a quiver: given $A = \mathbb{C}[Q]/I$ for a finite quiver $Q$, with relations generated by elements taken from the subspace $ \CC[Q]_2 \subset \CC[Q]$ of paths of length two, following \cite{RMV}*{\S 1}, define the bilinear form 
\[
\langle \cdot , \cdot \rangle : \CC[Q^{\op}]_2 \times \CC[Q]_2\rightarrow \CC \quad ;  \quad \langle \gamma^{\op}, \gamma' \rangle = 
\begin{cases}
    0 \text{ if } \gamma \neq \gamma'  \\
    1 \text{ otherwise.} 
\end{cases}
\]
We can use this form to produce an ideal $I^\perp \lhd \CC[Q^{\text{op}}]$ 
\[
I^\perp := \langle \gamma \in \CC[Q^{\op}]_2 \mid \langle \gamma, \gamma' \rangle = 0, \ \forall \gamma' \in I\rangle. 
\]
Morally, the ideal $I^\perp$ can be thought of as the ideal generated by complement of the quadratic relations generating $I$ in the degree 2 graded piece of the algebra, and indeed this description is precise in many cases. Clearly, if we run this process twice, it will recover the algebra with which we started, providing a duality, and the algebra 
$$A^! := \CC[Q^{\text{op}}]/I^\perp$$
is appropriately called the $\textit{quadratic dual}$. \newline 

In the case that $\mathbb{C}[Q]/I$ is Koszul, the linear projective resolution of its degree zero component produces an isomorphism 
\begin{equation}\label{eqn:quadraticKoszul}
    A^! \cong \Ext_{A}^\bullet(A_0, A_0),
\end{equation}
and we refer to $A^!$ as the \textit{Koszul dual} of $A$ in this case. In fact, the condition that a quadratic algebra $A$ have a dual that satisfies the above isomorphism is equivalent to the condition that $A$ be Koszul \cite{PP}*{Definition 2.1.1}.  Note that considering $A_0$ as an $A$-module is equivalent to considering the $A$-module produced by selecting a representative from each isomorphism class of simple objects and taking their direct sum, and it is this perspective that we will consider throughout. Koszul algebras enjoy several convenient properties. For our purposes, we will use the fact, if $A$ and $B$ are Koszul, then $A \otimes B$ is Koszul \cite{PP}*{Corollary 3.1.2}. In fact, 
\[
(A \otimes B)^! \cong A^! \overline{\otimes} B^!, 
\]
where the notation $\overline{\otimes}$ refers to the alternating tensor product \cite{PP}*{Proposition 3.1.1}. 

Our strategy in this paper is to realise $\Perv_{H_\lambda}^\circ(V_\lambda)$ for arbitrary split rank $d \geq 1$ as the product of $d$ copies of the split rank $1$ case, which, as we have seen, is equivalent to the category of modules over the bounded path algebra $\mathbb{C}[Q]/\langle ab, ba \rangle$ for the quiver $Q$, described by Equation~\ref{eqn:quiver}. Of course, $E$, appearing in Definition~\ref{definition:nabla}, is the endomorphism algebra for the projective objects in a category equivalent to modules over this bounded path algebra. Hence this bounded path algebra $\mathbb{C}[Q]/\langle ab, ba \rangle$ and $E$ are Morita equivalent, and, in fact, isomorphic. 
\begin{lemma}
    For the quiver $Q$ from Equation~\ref{eqn:quiver}, the bounded path algebra $\CC[Q]/ \langle ab, ba \rangle$, and hence the algebra $E$ from Definition~\ref{definition:nabla}, is Koszul and there is an isomorphism
    \[
\left( \CC[Q]/\langle ab, ba \rangle \right)^! \cong \CC[Q] 
    \]
\end{lemma}
\begin{proof}
   Clearly this algebra is quadratic and there is an isomorphism $Q^{\text{op}} \cong Q$ which respects the relations $\langle ab, ba \rangle$. Since these relations span the subspace of length 2 paths, it is straightforward to calculate that 
\[
\langle ab, ba \rangle^\perp = \langle 0 \rangle, 
\]
and so $A^!$ is given by the path algebra for the same quiver but without any bounding relations. Moreover, we can write down a precise projective resolution for the simple object at node at $0$, as 
\[\begin{tikzcd}[ampersand replacement=\&]
	\dots \& \CC \& \CC \& \CC \& \CC \& \CC \\
	\dots \& \CC \& \CC \& \CC \& \CC \& 0
	\arrow[from=1-1, to=1-2]
	\arrow["0", from=1-2, to=1-3]
	\arrow["0", curve={height=-6pt}, from=1-2, to=2-2]
	\arrow["1", from=1-3, to=1-4]
	\arrow["1", curve={height=-6pt}, from=1-3, to=2-3]
	\arrow["0", from=1-4, to=1-5]
	\arrow["0", curve={height=-6pt}, from=1-4, to=2-4]
	\arrow["1", two heads, from=1-5, to=1-6]
	\arrow["1", curve={height=-6pt}, from=1-5, to=2-5]
	\arrow[curve={height=-6pt}, from=1-6, to=2-6]
	\arrow[from=2-1, to=2-2]
	\arrow["1", curve={height=-6pt}, from=2-2, to=1-2]
	\arrow["1"', from=2-2, to=2-3]
	\arrow["0", curve={height=-6pt}, from=2-3, to=1-3]
	\arrow["0"', from=2-3, to=2-4]
	\arrow["1", curve={height=-6pt}, from=2-4, to=1-4]
	\arrow["1"', from=2-4, to=2-5]
	\arrow["0", curve={height=-6pt}, from=2-5, to=1-5]
	\arrow["0"', two heads, from=2-5, to=2-6]
	\arrow[curve={height=-6pt}, from=2-6, to=1-6]
\end{tikzcd}\]
with the obvious dual resolution for the node at 1. Thus, both resolutions alternate between the pair of projective objects in the category, and thus the category enjoys extensions of arbitrary length, with extensions between simple objects in a common isomorphism class concentrated in even degrees and extensions between objects in different classes concentrated in odd degrees. Thus, even extension correspond to successive cycles around the quiver $Q$, while odd paths correspond to the same cycles plus an additional length one path, which one may use to write down an isomorphism
\[
\CC[Q] \cong \Ext_A^\bullet(A_0 , A_0), 
\]
and since the former is the quadratic dual, both $\CC[Q]/\langle ab, ba \rangle$ and its dual are Koszul. 
\end{proof}
It is known to experts that this is a precise description of the endomorphism algebra of simple perverse sheaves in the $\Gmult$-equivariant derived category for $\mathbb{A}^1$, and a general description of this phenomenon will be the content of our final section. Note that, as the quivers we encounter are all built $Q$ above, they are all isomorphic to their opposite. Hence, we will often conflate $Q$ and $Q^{\text{op}}$ \\

We end this section by briefly describing how we may think of successive tensor products of the path algebra $A =\CC[Q]/\langle ab, ba\rangle$. One can use the relations for tensor products of finite dimensional algebras from \cite{ZL}*{Lemma 1.3} to see that $A \otimes A$ is equivalent to the path algebra of the quiver 
\[\begin{tikzcd}[ampersand replacement=\&]
	\& 11 \\
	01 \&\& 10 \\
	\& 00
	\arrow[curve={height=-6pt}, from=1-2, to=2-1]
	\arrow[curve={height=6pt}, from=1-2, to=2-3]
	\arrow[curve={height=-6pt}, from=2-1, to=1-2]
	\arrow[curve={height=6pt}, from=2-1, to=3-2]
	\arrow[curve={height=6pt}, from=2-3, to=1-2]
	\arrow[curve={height=-6pt}, from=2-3, to=3-2]
	\arrow[curve={height=6pt}, from=3-2, to=2-1]
	\arrow[curve={height=-6pt}, from=3-2, to=2-3]
\end{tikzcd}\]
with all cycles and paths of length $\ell > 2$ set to zero, and with all ``diagrams commuting", \emph{i.e.}, relations associating any two paths of equal length with the same source an target. Generally, the algebra $E^{\otimes d}$ will be equivalent the the path algebra for the double quiver formed by the graph associated to a dimension $d$ hypercube, with the analogous relations. For our purposes, however, we will need a precise presentation of this quiver according to tensor products of morphisms between the projective objects in $\Perv_{\Gmult}(\mathbb{A}^1)$. 
\section{The quiver of the extension algebra of Steinberg representations}\label{section:quiver}
\label{section:quiveralgebra}
Our strategy in this section is to realise the extension algebra produced by extensions between generalised Steinberg representations in $\Rep(G)$ as the path algebra of a quiver. As described in Proposition~\ref{prop:dat}, the structure of this algebra is governed by the combinatorics of subsets of simple roots of $G$. Since the generalised Steinberg representations are in bijection with these subsets, the powerset lattice of simple roots provides an obvious choice to build this presentation. 

Let $\mathscr{P}(S)$ be the quiver given by the powerset of simple roots $S$, whose set of vertices $\mathscr{P}(S)_0$ is given by subsets $I \subseteq S$ and whose set of arrows $\mathscr{P}(S)_1$ is given by inclusions of subsets $I \subseteq J$ where $|J \setminus I| = 1$ and use 
\[
\gamma_{IJ}:I \rightarrow J
\]
to denote the arrow corresponding to this inclusion. The stationary path at any vertex $I$, {\it i.e.}, the paths of length zero, will be denoted by $\varepsilon_I$. 
Let $Q$ be the double quiver generated by $\mathscr{P}(S)$, so that 
\[
Q_0 = \mathscr{P}(S)_0; \quad Q_1 = \mathscr{P}(S)_1 \cup \mathscr{P}(S)_1^{\op}  
\]
where $\mathscr{P}(S)_1^{\op}$ is a second copy of all arrows from the original quiver, but with their directions reversed. We denote these arrows by $\gamma_{JI}:J \rightarrow I$ whenever $\gamma_{IJ}:I \rightarrow J$ is an arrow in $\mathscr{P}(S)_1$. Let $\text{Path}(Q)$ denote the set of paths in $Q$ and define the functions
\begin{align*}
    s:\text{Path}(Q) \rightarrow Q_0; \qquad & (\gamma:I \rightarrow J) \mapsto I \\
    t:\text{Path}(Q) \rightarrow Q_0; \qquad & (\gamma:I \rightarrow J) \mapsto J \\
    \ell: \text{Path}(Q) \rightarrow \mathbb{N};\ \, \qquad & (\gamma:I \rightarrow J) \mapsto \char"0023\{ \varphi \in Q_1 \mid \varphi \in \gamma \}, 
\end{align*}
whereby $\varphi \in \gamma$ we mean $\varphi$ appears in as an arrow in the path $\gamma$, counting multiplicity, and we refer to $\ell(\varphi)$ as the \emph{length} of the path. We denote by $\CC[Q]$ the path algebra of $Q$ (\emph{c.f.}, \cite{ASS}*{Definition 1.2}). By abuse of notation we view $\gamma_{IJ}$ simultaneously as an arrow in the quiver $Q$ as well as a basis element of $\CC[Q]$, and generalise the notation so that $\gamma_{IJ}:I\rightarrow J$ may denote a path from $I$ to $J$, in the case where $|I\setminus J| \neq 1$ and $|J \setminus I|\neq 1$. 
In the same way, we write $\varepsilon_I$ to denote the stationary path at $I$ as well as the associated primitive idempotents in $\CC[Q]$. 

\begin{definition}
Recall the definition of the generalised Steinberg representation $\sigma_I$ of $G$ corresponding to the subset $I \subseteq S$ of simple roots. Then, write 
\[
\Sigma_\lambda := \bigoplus_{I \subseteq S} \sigma_I 
\]
where $\lambda:W_F' \rightarrow {}^L G$ is the infinitesimal parameter that collects these representations \textemdash that is, $\lambda$ is the infinitesimal parameter of the trivial representation $\1_{G}$ of $G(F)$, as in Section~\ref{subsection:parameters}.

\end{definition}

\begin{lemma}\label{lemma:quiversurjection}
    There is a surjection of associative algebras 
    \[
    \varphi: \CC[Q] \twoheadrightarrow \Ext_G^\bullet(\Sigma_\lambda, \Sigma_\lambda),
    \]
    and hence an isomorphism of algebras
    \[
\CC[Q]/\ker(\varphi) \cong \Ext_G^\bullet(\Sigma_\lambda, \Sigma_\lambda).
    \]
\end{lemma}

\begin{proof}
   From the above definition, $\varepsilon_I$ is the idempotent element in $\CC[Q]$ corresponding to the stationary path attached to $I\subseteq S$. Let $\varphi(\varepsilon_I)$ be the identity morphism in  $\Hom_G(\sigma_I, \sigma_I)$. Recall that the arrows in $\Gamma$ are given by subsets $I,J \subseteq S$ where $|I \setminus J| = 1$ or $|J \setminus I| = 1$. Define 
    \[
\varphi(\gamma_{IJ}) \in \Ext_G^1(\sigma_I, \sigma_J)
    \]
    as the unique representative of the group whose corresponding extension appears as a factor in the degree $d = |S|$ extension between $\mathbbm{1}_G$ and $\St_G$ that is given by the Aubert-Deligne-Lusztig sequence for $\mathbbm{1}_G$, or its Aubert dual, depending on the orientation of the extension. Here, the Aubert-Deligne-Lusztig sequence is defined as in \cite{NP}*{\S1}. Write the degree-$1$ extension $\varphi(\gamma_{IJ})$ as 
    \[
0 \rightarrow \sigma_J \rightarrow E_{IJ}  \rightarrow \sigma_I \rightarrow 0.
    \]
    Applying the cohomological functor $R\Hom_G( \sigma_I, -)$ to this sequence produces the usual long-exact sequence in cohomology. Since, up to equivalence, only one extension between $\sigma_I$ and $\sigma_J$ exists and is in degree 1, the connecting homomorphism 
    \[
    \kappa:\Hom_G(\sigma_I, \sigma_I) \rightarrow \Ext_G^1(\sigma_I, \sigma_J)
    \]
    is an isomorphism, and we have
    \[
    \varphi(\gamma_{IJ}) = \kappa(1_{\sigma_{I}}). 
    \]
by construction. The multiplicative structure on $\Ext_G^\bullet(\Sigma_\lambda, \Sigma_\lambda)$ induced by the cup product thus satisfies the conditions necessary to apply the universal property of path algebras from \cite{ASS}*{Theorem 1.8}. The universal property says that the assignment $\varphi$ induces a surjective  morphism of algebras
    \[
\varphi: \CC[Q] \twoheadrightarrow \Ext_G^\bullet(\Sigma_\lambda, \Sigma_\lambda),
    \]
    and this map respects the grading of both algebras, by construction, providing the desired surjection of associative algebras. 
\end{proof}

The goal of this section is to describe the algebra $\Ext_G^\bullet(\Sigma_\lambda, \Sigma_\lambda)$ explicitly. Given Lemma~\ref{lemma:quiversurjection}, the problem is reduced to understanding the kernel of $\varphi$. To do this, we begin by identifying a set of paths in $Q$ in order to characterise the non-trivial extensions in $\Ext_G^\bullet(\Sigma_\lambda, \Sigma_\lambda)$.

\begin{definition}\label{definition:kernelset} 
    Let $\text{\normalfont Path}(Q)$ be the set of all paths in $Q$, now interpreted simultaneously as the set of paths in $Q$ as well as a $\CC$-basis for $\CC[Q]$, and let $\Gamma \subset \text{\normalfont Path}(Q)$ be the set of paths in $Q$ of the form 
    \[
 \gamma = \gamma_{I_{m-1} I_{m}}  \gamma_{I_{m-2} I_{m-1}} \dots \gamma_{I_0} \gamma_{I_{{1}}}:I_0 \rightarrow I_m
    \]
    satisfying $m = \delta(I_0, I_{m})$, where, again, $\delta(I_{i-1}, I_{i}) = 1$ for all $1 \leq i \leq m$. 
\end{definition}
The set $\Gamma$ is fundamental to our analysis: it consists precisely of those paths of minimal length between any pair of vertices $I,J \subseteq S$. In the powerset lattice, this minimal length is exactly the symmetric difference and thus coincides precisely with a non-trivial extension between the generalised Steinberg representations associated with these subsets, as we saw in Proposition~\ref{prop:dat}. We shall see that the presentation of the extension algebra of generalised Steinberg representations follows easily once this correspondence is given.
\begin{lemma}\label{lemma:pathconstant}
    Let $\gamma, \gamma'  \in \Gamma$ . Then, 
    \begin{enumerate}
        \item if $s(\gamma) = I$ and $t(\gamma) = J$, the image
                 \[
                \varphi(\gamma) \in \Ext_G^{\delta(I,J)} (\sigma_I, \sigma_J) 
                \]
   generates the one-dimensional vector space, over $\CC$;
    \item Furthermore, if $\ell(\gamma) = \ell(\gamma')$, $s(\gamma) = s(\gamma')$ and $t(\gamma) = t(\gamma')$, then,
    \[
\gamma - \gamma '  \in \ker(\varphi). 
    \] 
    \end{enumerate}
\end{lemma}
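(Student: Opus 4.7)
The plan is to exploit the one-dimensionality of the relevant Ext groups supplied by Proposition~\ref{thm:dat}(1): both claims reduce to controlling when an iterated cup product of degree-one generators in $\Ext_G^\bullet(\Sigma_\lambda,\Sigma_\lambda)$ is nonzero. Since $\varphi$ sends concatenation in $\mathbb{C}[Q]$ to cup product (this is exactly the universal-property data built into the definition of $\varphi$ in Lemma~\ref{lemma:quiversurjection}), for a path
\[
\gamma = \gamma_{I_{m-1} I_m}\,\gamma_{I_{m-2} I_{m-1}}\cdots \gamma_{I_0 I_1}\in\Gamma
\]
we have $\varphi(\gamma) = \varphi(\gamma_{I_{m-1} I_m})\cup\cdots\cup \varphi(\gamma_{I_0 I_1})$, with each factor a generator of the one-dimensional space $\Ext_G^1(\sigma_{I_k},\sigma_{I_{k+1}})$.

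For part (1), the key combinatorial observation is that the condition $m = \delta(I_0,I_m)$ forces the symmetric-difference distance to grow by exactly one at each step: that is, $\delta(I_0,I_k) = k$ for $0\le k\le m$. I would prove this by the triangle inequality for symmetric difference, noting that consecutive vertices satisfy $\delta(I_k,I_{k+1}) = 1$, which gives the bounds $\delta(I_0,I_k)\le k$ and $\delta(I_k,I_m)\le m-k$; combined with $\delta(I_0,I_m)\le \delta(I_0,I_k)+\delta(I_k,I_m)$ and the hypothesis $\delta(I_0,I_m) = m$, all inequalities collapse to equalities. With this in hand, an induction on $k$ applies Proposition~\ref{thm:dat}(2) at each stage: the partial cup product $\varphi(\gamma_{I_{k-1}I_k})\cup\cdots\cup\varphi(\gamma_{I_0 I_1})$ is a generator of $\Ext_G^k(\sigma_{I_0},\sigma_{I_k})$, and cupping with the next degree-one generator lands in $\Ext_G^{k+1}(\sigma_{I_0},\sigma_{I_{k+1}})$ by the isomorphism supplied in Proposition~\ref{thm:dat}(2). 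In particular $\varphi(\gamma)\neq 0$ in the one-dimensional target $\Ext_G^{\delta(I,J)}(\sigma_I,\sigma_J)$, hence generates it.

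Part (2) is then almost automatic. Under the hypotheses $s(\gamma)=s(\gamma') = I$, $t(\gamma)=t(\gamma')=J$, and $\ell(\gamma)=\ell(\gamma')$, both $\gamma$ and $\gamma'$ lie in $\Gamma$ with length $\delta(I,J)$, so by part (1) both $\varphi(\gamma)$ and $\varphi(\gamma')$ are nonzero elements of the one-dimensional $\CC$-vector space $\Ext_G^{\delta(I,J)}(\sigma_I,\sigma_J)$. Hence there is a unique scalar $c_{\gamma\gamma'}\in\CC^\times$ with $\varphi(\gamma) = c_{\gamma\gamma'}\varphi(\gamma')$, giving $\gamma - c_{\gamma\gamma'}\gamma'\in\ker(\varphi)$.

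The only real obstacle I anticipate is being fully careful that $\varphi$ does send the quiver composition $\gamma_{JK}\cdot\gamma_{IJ}$ to the Yoneda/cup product $\varphi(\gamma_{JK})\cup\varphi(\gamma_{IJ})$ — i.e., that the universal property invoked in Lemma~\ref{lemma:quiversurjection} indeed promotes our degree-one generators into a compatible morphism of graded algebras. Once this compatibility is explicit, the rest is the triangle-inequality bookkeeping and a direct appeal to Proposition~\ref{thm:dat}(2).
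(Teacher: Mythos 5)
Your proof is correct and follows essentially the same strategy as the paper's: reduce to the one‑dimensionality statement in Proposition~\ref{thm:dat}(1) and run an induction through the cup‑product criterion in Proposition~\ref{thm:dat}(2). The one place you do more than the paper is the triangle‑inequality argument showing $\delta(I_0,I_k)=k$ for every intermediate vertex; the paper's proof states this inductive compatibility only implicitly (``applying Theorem~\ref{thm:dat} inductively on each arrow''), so your explicit bookkeeping is a welcome clarification rather than a departure.
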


\begin{proof}
    (1): For any $\gamma \in \Gamma$ , we have $\ell(\gamma) = \delta(I,J)$, thus $\gamma$ is made up of arrows that appear in $\gamma$ with multiplicity one. 
    Applying applying Proposition~\ref{prop:dat} inductively on each arrow found in the path, it follows that $\varphi(\gamma) \in \Ext_G^{\delta(I,J)} (\sigma_I, \sigma_J) $. 
    Since this vector space is one-dimensional, the set containing $\varphi(\gamma)$ can be taken to be a basis for the space.\\
    (2): If $\ell(\gamma) = \ell(\gamma')$, $s(\gamma) = s(\gamma')$ and $t(\gamma) = t(\gamma')$, then it follows from (1) that 
    \[
\varphi(\gamma), \varphi(\gamma') \in \Ext_G^{\delta(I,J)}(\sigma_I, \sigma_J),
    \]
    and both are nonzero. 
    Since this space is one-dimensional, there must exist a nonzero constant $c_{\gamma \gamma'}$ so that $\varphi(\gamma) = c_{\gamma \gamma'} \varphi(\gamma')$. Equivalently, 
    \[
\gamma - c_{\gamma \gamma'} \gamma' \in \ker(\varphi). 
    \] 
    since $\varphi$ is a morphism of algebras. However, because we chose each $\varphi(\gamma_{IJ})$ so that they correspond to factors in the same degree $d$ sequence, the choices are compatible with the multiplication in $\Ext_G^\bullet(\Sigma, \Sigma)$ and it follows that $c_{\gamma \gamma'} = 1$. 
\end{proof}

This result implies that every element $\beta \in \Ext_G^\bullet(\Sigma, \Sigma)$ is given by 
\[
\beta = \sum_i a_i \varphi(\gamma_i),
\]
for $a_i \in \CC$ and $\gamma_i \in \Gamma$, forcing all loops in $\Gamma$ to be zero in $\C[Q]/\ker(\varphi)$ and all paths of the same length to be associated up to a constant, whenever they have the same source and target. We can make this precise by describing the kernel of this homomorphism of algebras:

\begin{proposition}\label{prop:quiverkernel}
 The kernel of the surjection $\varphi$ is generated, as a vector space in the following way: 
 \[
\ker(\varphi) = \left\langle\Gamma_0 \cup \Gamma_1 \right\rangle 
 \]
where
\[
\Gamma_0 :=  \{ \gamma- \gamma' \mid s(\gamma) = s(\gamma'),\ t(\gamma) = t(\gamma'),\ \ell(\gamma) = \ell(\gamma') = 2 \}
\]
and 
\[
\Gamma_1 := \{ \gamma \mid s(\gamma) = t(\gamma),\ \ell(\gamma) = 2 \}.
\]
 \end{proposition}
 
 \begin{proof} 
 By Lemma~\ref{lemma:pathconstant}, two paths in $Q$ map to the same non-trivial element in $\Ext_G^\bullet(\Sigma_\lambda, \Sigma_\lambda)$ when they have the same source and target.
 Since any path can be factored into paths of length $1$, it follows by induction on path length that the image of paths in $\Ext_G^\bullet(\Sigma_\lambda, \Sigma_\lambda)$ is uniquely determined by the image of paths of length $2$; this shows that every member of $\Gamma_0$ is in $\ker(\varphi)$. Also, notice that any cycle, and, thus, any member of $\Gamma_1$, is contained in in $\ker(\varphi)$, since 
\[
 \Ext_G^\bullet(\sigma_I, \sigma_I) = \Hom_G(\sigma_I, \sigma_I).
 \]
 Now, suppose that $\gamma = \gamma_m \dots \gamma_1$ is a path of length $m = \delta(s(\gamma), t(\gamma))$, and $\gamma'$ is a path of length $1$ with $t(\gamma') = s(\gamma)$ but $\delta(s(\gamma'\gamma), t(\gamma'\gamma)) \neq \ell(\gamma'\gamma)$. Then, there exists paths $\gamma_m'$ and $ \gamma_{m-1}'$ of length $1$ so that the following conditions hold: 
     \begin{enumerate}
        \item $s(\gamma_m') = t(\gamma_{m-1}') = t(\gamma')$ 
         \item $\gamma = \gamma_m \gamma_{m-1} \gamma_{m-2} \dots \gamma_1 = \gamma'_m \gamma'_{m-1} \gamma_{m-2} \dots \gamma_1 $
     \end{enumerate}
     Then, $\gamma'\gamma_m'$ is a loop of length $2$ and hence, in $\CC[Q]/\ker(\varphi)$, we have
     \begin{align*}
         \gamma' \gamma & = \gamma' (\gamma_m \gamma_{m-1} \gamma_{m-2} \dots \gamma_1 )\\
         & = \gamma'( \gamma_m' \gamma_{m-1}' \gamma_{m-2} \dots \gamma_1) \\
         & = (\gamma'\gamma_m')(\gamma_{m-1}' \gamma_{m-2} \dots \gamma_1 ) \\
         & = 0 
     \end{align*}
     implying that any element of $\text{Path}(Q)\setminus \Gamma$ appears in $\ker(\varphi)$. Moreover, Lemma~\ref{lemma:pathconstant} ensures that $\varphi(\gamma) \neq 0$ for any path in $\gamma \in \Gamma$, providing the result.
 \end{proof}

\section{The Main result}\label{section:main result}
Let $G$ be a split, semisimple connected algebraic group over a $p$-adic field $F$.
Let $\lambda : W_F \to \dualgroup{G}$ be the Langlands parameter for the trivial representation of $G(F)$ and let $X_\lambda$ be the variety of Langlands parameters with this infinitesimal parameter, as in Section~\ref{ssec:Vogan variety}. 
Let $\Sigma_\lambda$ be the direct sum of generalised Steinberg representations of $G(F)$, as in Section~\ref{section:spectral cat}. We let $S$ be the simple roots for $G(F)$. Until now, we have differentiated between $S$ and $\dualgroup{S}$, the set of simple roots for $\dualgroup{G}$. However, as $|S| = |\dualgroup{S}| < \infty $, we fix an order preserving bijection between these sets we will now work only with $S$, since the construction of the quivers at hand depend only on the power set lattice of $S$ (resp. $\dualgroup{S}$), and hence the cardinality of these sets. 
Finally, recall the definition of the principal block $\Perv_{\dualgroup{G}}^\circ(X_\lambda)$ from Section~\ref{subsection:principal block}. 
\begin{lemma}\label{lemma:surjection2} 
Recall the endomorphism algebra $E_\lambda$ given by Definition~\ref{def:Alambda}. There exists a surjection of algebras
\[
\psi:\CC[Q] \twoheadrightarrow E_\lambda 
\]
so that 
\[
\ker(\psi) = \ker(\varphi),
\]
where $\varphi$ is the homomorphism of algebras defined in Lemma~\ref{lemma:quiversurjection}. 
\end{lemma}
\begin{proof}
For any vertex $I$ in the quiver $Q$, let $\psi(\varepsilon_I)\in \Hom_{\Perv_{\Gmult}(\mathbb{A}^1)^{\boxtimes d}}(\mathcal{P}_I, \mathcal{P}_I)$ be the identity map. For any arrow $\gamma_{IJ}: I \rightarrow J$, set 
\[\label{eqn:multbasis}
\psi(\gamma_{IJ}) = f : \mathcal{P}_I \rightarrow \mathcal{P}_J 
\]
where $f$ is equivalent to a map of the form
\[
\tilde{f} = f_1 \otimes f_2 \otimes \dots \otimes f_d,
\]
as was described in Definition~\ref{definition:grading} where $f_i$ is an identity map, on either $\Delta$ or $\nabla$, except in the case where $i$ corresponds to the root found in the (in this case) singleton set $I \ominus J$. Checking the compatibility with the multiplication in $E_\lambda$, apply the universal property of path algebras (again, see \cite{ASS}*{Theorem 1.8}) and lift $\psi$ to the surjection of algebras
\[
\psi:\CC[Q] \twoheadrightarrow E_\lambda. 
\]
As we saw in Corollary \ref{cor:firstequivalence}
\[
\Perv_{H_\lambda}^\circ(V_\lambda)\simeq \Mod(E_\lambda) 
\]
Furthermore, it is easy to see that $E_\lambda$ and $E^{\otimes d}$ are isomorphic, where $E$ is given in Definition~\ref{definition:nabla}, and not simply Morita equivalent. The relations for the base case are stated in Lemma~\ref{lemma:quiver}, and we can therefore use the characterization of relations for the tensor product of path algebras stated in \cite{ZL}*{Lemma 1.3} to write down the relations characterising $\ker(\psi)$ and see that $ \ker(\psi) = \ker(\varphi).$
\end{proof} 
We are now in a position to state our first main result, which establishes the desired equivalence between the extension algebra of generalised Steinberg representations and the principal block of equivariant perverse sheaves on the corresponding Vogan variety.

\begin{theorem}\label{thm:main1}
The category of modules over the extension algebra $\Ext_G^\bullet(\Sigma_\lambda, \Sigma_\lambda)$ is equivalent to the principal block $\Perv_{\dualgroup{G}}^\circ(X_\lambda)$ of the category of equivariant perverse sheaves:
    \[
\Mod(\Ext_{G}^\bullet(\Sigma_\lambda, \Sigma_\lambda))
\simeq 
\Perv_{\dualgroup{G}}^\circ(X_\lambda) . 
    \]
\end{theorem}

\begin{proof}
Recall the endomorphism algebra $E_\lambda$ from Definition~\ref{def:Alambda}. 
Lemma~\ref{lemma:quiversurjection} and Lemma~\ref{lemma:surjection2} give isomorphisms 
\[
\Ext_G^\bullet(\Sigma_\lambda, \Sigma_\lambda) \cong \CC[Q]/\ker(\varphi) =  \CC[Q]/\ker(\psi) \cong E_\lambda .
\]
By Proposition~\ref{proposition:endomorphism}, we have an equivalence of categories 
  \[
\Perv_{H_\lambda}^\circ(V_\lambda) \simeq \Mod(E_\lambda). 
\]
This implies the result. 
\end{proof}

\begin{example}
Returning to Example~\ref{example:1} from the end of Section~\ref{ssec:SimpleObjectsPrincipalBlock}, set $G = \text{PGL}(2)$ and let $\lambda$ be the unramified infinitesimal parameter given by $\lambda(\Frob) = \diag(q^{1/2}, q^{-1/2})$.
Now can visualise the equivalence appearing in Theorem~\ref{thm:main1} explicitly: in this case $\Perv_{H_\lambda}^\circ(V_\lambda)$ is equivalent to the category of finite dimensional modules over the path algebra from the base case, as we saw in Lemma~\ref{lemma:quiver}. From the stratification of $V_\lambda \cong \mathbb{A}^1$ into $C_0 \cong \{0\}$ and $C_1 \cong \Gmult$, the Langlands correspondence gives
\[
\1_{\text{PGL}_2(F)} \leftrightarrow \IC(\1_{C_0}); \qquad \St_{\text{PGL}_2(F)} \leftrightarrow \IC(\1_{C_1})
\]
Since $\text{PGL}(2)$ has a unique simple root, the description outlined in describes only two short exact sequences between these representations: 
\[\begin{tikzcd}[ampersand replacement=\&]
	0 \& {\1_{\text{PGL}_{2}(F)}} \& {i_B^G\left(\delta^{-1/2}\right)} \& {\St_{\text{PGL}_2(F)}}\& 0 \\
	0 \& {\St_{\text{PGL}_2(F)}} \& {i_{\overline{B}}^G\left(\delta^{-1/2}\right)} \& {\1_{\text{PGL}_2(F)} } \& 0
	\arrow[from=1-1, to=1-2]
	\arrow[from=1-2, to=1-3]
	\arrow[from=1-3, to=1-4]
	\arrow[from=1-4, to=1-5]
	\arrow[from=2-1, to=2-2]
	\arrow[from=2-2, to=2-3]
	\arrow[from=2-3, to=2-4]
	\arrow[from=2-4, to=2-5]
\end{tikzcd}\]
Where $B \subset G$ is the chosen Borel, and $\delta$ is the modulus character for the choice of maximal torus. Here, $\Sigma_\lambda = \1_{\text{PGL}_{2}(F)} \oplus \St_{\text{PGL}_{2}(F)}$, and the elements corresponding to the pair of extensions above must compose to zero in the algebra, since there are no higher order extension. This provides the isomorphism with $E_\lambda$, and thus the equivalnece of categories from Theorem~\ref{thm:main1}, for the base case. 
\end{example}

This equivalence mirrors a remarkable degree of symmetry between either side of the Langlands correspondence. For instance, consider the behaviour of Aubert duality on generalised Steinberg representations (resp. the Fourier transform on $\Perv_{\dualgroup{G}}^\circ(X_\lambda)$): as we say in Lemma~\ref{lem:aubertdualofsteinberg}, the Aubert-Zelevinksy dual of generalised representations is also governed by subsets of simple roots, as they satisfy the equation $\AZ(\sigma_I) \cong \sigma_{I^c}$. In the same way, it is well known that (associating $\mathbb{A}^1$ with its dual) we have $\Ft(\IC(\1_{\Gmult})) \cong \IC(\1_0)$ and, since the the functor is exact, it interchanges the projective cover for these objects, so
\[
\Ft(\Delta) \cong \nabla. 
\]
from which it follows that 
\[
\Ft(\mathcal{P}_{I}) \cong \mathcal{P}_{I^c}. 
\]
in the decomposition $\Perv_{\Gmult}(\mathbb{A}^1)^{\boxtimes d}$, since the Fourier transform is well known . Thus, these functors induce equivalent automorphisms of $\Ext_G^\bullet(\Sigma_\lambda, \Sigma_\lambda)$ and $E_\lambda$, ensuring a compatibility with the equivalence from Theorem~\ref{thm:main1} so that the diagram 
\[\begin{tikzcd}[ampersand replacement=\&]
	{\Mod(\Ext_G^\bullet(\Sigma_\lambda, \Sigma_\lambda) )} \&\& {\Perv_{\dualgroup{G}}^\circ(X_\lambda)} \\
	\\
	{\Mod(\Ext_G^\bullet(\Sigma_\lambda, \Sigma_\lambda) )} \&\& {\Perv_{\dualgroup{G}}^\circ(X_\lambda)}
	\arrow["\sim", from=1-1, to=1-3]
	\arrow["\AZ"', from=1-1, to=3-1]
	\arrow["\Ft", from=1-3, to=3-3]
	\arrow["\sim", from=3-1, to=3-3]
\end{tikzcd}\]
commutes, up to an equivalence of categories. 

In fact, since $\Delta$ and $\nabla$ are both projective and injective in $\Perv_{\Gmult}(\mathbb{A}^d)$, it follows immediately from Lemma~\ref{lemma:Bourbaki} (after realising that the case for injective objects of tensor products of finite dimensional algebras is exactly the dual statement of the second part of this lemma) that the projective and injective objects coincide in the category of $E_\lambda$-modules implying that $E_\lambda$, and hence $\Ext_G^\bullet(\Sigma_\lambda, \Sigma_\lambda)$, is a self-injective algebra (meaning that is injective object over itself, which follows since the projective decomposition of the regular representation must likewise be a decomposition into injective objects). From this point of view, the functor induced by Aubert-Zelevinksy duality is easily shown to be none other than the Nakayama functor
$$
D\circ \Hom%_{\Ext_G^\bullet(\Sigma_\lambda, \Sigma_\lambda)}
( - , \Ext_G^\bullet(\Sigma_\lambda, \Sigma_\lambda)) : \Mod(\Ext_G^\bullet(\Sigma_\lambda, \Sigma_\lambda)) \xrightarrow{\sim} \Mod(\Ext_G^\bullet(\Sigma_\lambda, \Sigma_\lambda)), 
$$
where $D = \Hom_{\CC}(-,\CC)$ is used to denote the usual duality on finite dimensional algebras, which may be viewed here as being in correspondence with contragradience for representations of $p$-adic groups; see \cite{ASS}*{III.2} for a discussion of the properties of this functor for arbitrary finite dimensional algebras. 
Indeed, in the case of a Koszul, self-injective algebra $A$, the Nakayama functor produces an analogus duality on a certain derived category of $A^!$ modules, given by $D\Ext_{A^!}^d(- , A^!)$, where $d$ is the graded global dimension of $A^!$ \cite{RMV99}*{Theorem 5.6}. Of course, this duality immediately recalls the definition of Aubert-Zelevinsky duality discussed throughout Section~\ref{ssection:AZ}, indicating that the Hecke algebra $\mathcal{H}(G)$ may be replaced by $\Ext_G^\bullet(\Sigma_\lambda, \Sigma_\lambda)^!$ in the definition of this duality on a certain subcategory of $\Rep(G)$, and we close our paper with a realisation of this Koszul dual via the $\dualgroup{G}$-equivariant cohomology of $X_\lambda$. 
\section{Equivariant extensions of perverse sheaves} 
In this last section, we complete the picture by demonstrating the complementary result that the extension algebra of generalised Steinberg representations is Koszul dual to the extension algebra of simple, $\dualgroup{G}$-equivariant intersection cohomology complexes on $X_\lambda$, being the endomorphism algebra of these complexes in the $\dualgroup{G}$-equivariant derived category of constructible sheaves on $X_\lambda$. To do this, we use a remarkable result from \cite{Braden-Lunts} that the equivariant derived category for a toric variety is equivalent to the derived category of its perverse heart, giving an equivariant analog of Beilinson's theorem which is well known \emph{not} to hold in general. This equivalence allows us to consider morphisms in $D^b_{\dualgroup{G}}(X_\lambda)$ as extensions of $E_\lambda$-modules and use Theorem~\ref{thm:main1} to produce a Koszul duality between extension algebras on either side of the Langlands correspondence.

To begin, let us recall the construction of the equivariant derived category $D^b_{\dualgroup{G}}(X_\lambda)$. There are several different descriptions of this category, all of which turn out to be equivalent \cite{Geoff}*{Theorem 6.0.1}. Naively, this is the bounded, derived category on the space $X_\lambda/\dualgroup{G}$. Of course, this space is often not a variety, but a stack, in which case it is often convenient to pass to one of the other, equivalent conceptions of this category. Following \cite{WW}*{Definition 2}, we give a definition for the equivariant derived category that will be more amenable to calculation in our context: 
\begin{definition}
    Let $V$ be an $H$-variety and let $EH$ be a free, contractable $H$-space, giving the usual quotient maps 
    \[
X \overset{p}\twoheadleftarrow EH \times V \overset{q}\twoheadrightarrow EH \times^H V. 
    \]
    Then, the $\textit{bounded equivariant derived category}$ on $V$ is given by the full subcategory of $D^b(EH \times^H V)$ given by the objects $\mathcal{F}$ satisfying $q^* \mathcal{F} \cong p^*\mathcal{G}$, 
    for some constructible complex $\mathcal{H}$ in $D^b_c(V)$. 
\end{definition}
This description is equivalent to the more familiar construction from \cite{BL}, as is outlined in \cite{WW}*{Remark 1}. The category of equivariant perverse $\Perv_H(V)$ can thus be described as the perverse heart of either of these categories. 
\begin{lemma}\label{lem:freeequivalence}
    Let $V$ be a toric variety, given by the action of a torus $H$, and let $A$ be an algebra whose category of finite-dimensional modules is equivalent to $\Perv_H(V)$. Then, there is a equivalence of triangulated categories
    \[
D_H^b(V) \simeq D^b(A). 
    \]
\end{lemma}
\begin{proof}
    Following \cite{Braden-Lunts}*{Corollary 7.3.3}, there is an equivalence of categories
    \begin{align*}
        D^b(\Perv_H(V)) \simeq D^b_H(V)
    \end{align*}
    The assumed equivalence $\Perv_G(X) \simeq \textbf{Mod}(A)$ then induces an equivalence of their corresponding bounded derived categories. 
\end{proof}

We are interested in the extension algebra of simple equivariant perverse sheaves in the principal block of $\Perv_{H_\lambda}^\circ(V_\lambda)$. By the result above, this should be equivalent to considering the endomorphism algebra of this object in the equivariant derived category, and it would be convenient if we could consider it in the simpler category $D^b_{\Gmult^d}(V_\lambda)$, given by component-wise multiplication of the torus on $V_\lambda$. To demonstrate this combinatorial convenience, it suffices to show that the embedding 
\[
\Perv_{\Gmult^d}(V_\lambda) \simeq \Perv_{H_\lambda}^\circ(V_\lambda) \hookrightarrow \Perv_{H_\lambda}(V_\lambda)
\]
is closed under extensions. 
\begin{lemma}
    For any $I, J \subseteq S$, the intersection of the closures of $C_I, C_J \subset V_\lambda$ is given by 
    \[
\overline{C}_I \cap \overline{C}_J \cong \overline{C}_{I \cup J}
    \]
\end{lemma}

\begin{proof}
    Recall from Section~\ref{ssec:Vogan variety} that any orbit $C_I \subset V$. The closure of this orbit is given by
    \[
\overline{C_I} = \{ (x_1, \dots, x_d) \mid x_i = 0 \text{ if } i \in I\} 
    \]
    Hence,
    \[
\overline{C}_I \cap \overline{C}_J = \{(x_1, \dots, x_d) \mid x_i = 0 \text{ if } i \in I \text{ or } i \in J \} 
\]
which is exactly the description of $\overline{C}_{I \cup J}$. 
\end{proof}
\begin{lemma}\label{lem:PointCohomology}
    Let $V$ be a contractible $H$-space. Then, 
    \[
H^\bullet_H(V) \cong H^\bullet_H(\text{pt})
    \]
\end{lemma}
\begin{proof}
    The usual projection $EH \times V \rightarrow EH$ gives a fibration
    \[
EH \times^H V := (EH \times V)/H \rightarrow EH/H =: BH. 
    \]
    This fibration has contractible fibers, since $V$ is contractible by assumption, and hence the map is a homotopy equivalence. 
\end{proof}
\begin{proposition}\label{prop:CohomologyAndExtensions}
    Let $I, J \subseteq S$. Then, there are isomorphisms of vector spaces
    \begin{align*}
 (i). \quad &\Ext_{H_\lambda}^\bullet(\IC(\mathbbm{1}_I), \IC(\mathbbm{1}_J)) \cong H^\bullet_{H_\lambda} (\overline{C}_{I \cup J} ) [\delta(I,J)]  \\
(ii). \quad & \Ext_{\Gmult^d}^\bullet(\IC(\mathbbm{1}_I), \IC(\mathbbm{1}_J)) \cong H^\bullet_{\Gmult^d} (\overline{C}_{I \cup J} ) [\delta(I,J)],  
    \end{align*}
where the notation in the latter half of each isomorphism is meant to denote the appropriate cohomology space of the orbit closure $\overline{C}_{I\cup J}$, shifted by $\delta(I,J)$.
\end{proposition}
\begin{proof}
    For $(i)$, recall from Section~\ref{ssec:Vogan variety} recall that every $H_\lambda$-orbit of $V_\lambda$ can be broken into component parts where every component is given by either $\Gmult$ or $\{0\}$. In particular, every orbit is smooth and hence 
    \[
\IC_I \cong {}^Ii_* \mathbbm{1}_{\overline{C}_I}\,[\dim C_I], 
    \]
where $\,^Ii : \overline{C_I} \to V_\lambda$ is inclusion.
    Hence, letting $d_I = \dim C_I$ and $d_J = \dim C_J$, we can use adjunction pushing to calculate
    \begin{align*}
        \Ext_{H_\lambda}^d(\IC_I, \IC_J) & \cong \Hom_{D^b_{H_\lambda}(V_\lambda)}({}^Ii_* \mathbbm{1}_{\overline{C}_I}[d_I], {}^Ji_* \mathbbm{1}_{\overline{C}_J}[d_J + d] ) \\
        & \cong \Hom_{D^b_{H_\lambda}(\overline{C}_J)}({}^Ji^*\ {}^Ii_* \mathbbm{1}_{\overline{C}_I}[d_I],  \mathbbm{1}_{\overline{C}_J}[d_J + d] ) \\
        & \cong \Hom_{D^b_{H_\lambda}(\overline{C}_{I \cup J})}( \mathbbm{1}_{\overline{C}_{I\cup J}}[d_I], {}^{I\cup J}i^!\ {}^Ji_* \mathbbm{1}_{\overline{C}_J}[d_J + d] ). \\
    \end{align*}
    Now, from the interaction of Verdier duality with six-functor formalism (\emph{c.f.}, \cite{Achar}*{\S 2.8}), it follows that
    \begin{align*}
\mathbb{D} {}^{I \cup J} i^! \mathbbm{1}_{\overline{C}_J}[d_J + d] & \cong {}^{I \cup J}i^* \mathbbm{D} \mathbbm{1}_{\overline{C}_J}[ d_J + d] \\
& = \mathbbm{1}_{\overline{C}_{I\cup J}}[ 2d_J - d_J - d]\\
& = \mathbbm{1}_{\overline{C}_{I\cup J}}[ d_J - d]
\end{align*} 
Hence, applying Verdier duality to both sides of the homset above, we obtain 
\begin{align*}
    \Hom_{D_{H_\lambda}^b(\overline{C}_{I \cup J})}(\mathbbm{1}_{\overline{C}_{I\cup J}}[d_I] &, {}^{I\cup J}i^!\ {}^Ji_* \mathbbm{1}_{\overline{C}_J}[d_J + d] )
    \\
    & \cong  \Hom_{D_{H_\lambda}^b(\overline{C}_{I \cup J})}(\mathbb{D} {}^{I\cup J}i^!\ {}^Ji_* \mathbbm{1}_{\overline{C}_J}[d_J + d], \mathbb{D} \mathbbm{1}_{\overline{C}_{I\cup J}}[d_I])
    \\
    & \cong  \Hom_{D_{H_\lambda}^b(\overline{C}_{I \cup J})} ( \mathbbm{1}_{\overline{C}_{I \cup J} } [ d_J - d],\mathbbm{1}_{\overline{C}_{I\cup J}}[2 d_{I \cup J} - d_I])\\
    & \cong H^{\bullet}_{H_\lambda}(\overline{C}_{I \cup J}) [d - d_I + d_J - 2d_{I \cup J}]
\end{align*}
Thus, the vector space is non-zero if and only if $d_J - d = 2d_{I \cup J} - d_I$ or, equivalently, $d = d_I + d_J - 2d_{I \cup J}$, in which case it is one-dimensional. Since $d_I = \abs{S \setminus I} = \abs{I^c}$ for all $I \subseteq S$, it is easy to see that 
\[
d = d_I + d_J - 2d_{I \cup J} = \delta(I,J) 
\]
providing the result. The result for $(ii)$ follows, \textit{mutatis mutandis}. 
\end{proof}

\begin{proposition}\label{prop:algebracomparison}
    There is an isomorphism of algebras 
    \[
\Ext_{H_\lambda}^\bullet ( \mathcal{T}_\lambda , \mathcal{T}_\lambda ) \cong \Ext_{\Gmult^d}^\bullet ( \mathcal{T}_\lambda , \mathcal{T}_\lambda ) 
    \]
    where $\mathcal{T}_\lambda$ is the direct sum of all intersection cohomology complexes of the form $\IC(\mathbbm{1}_{C})$ for some orbit $C \subset V_\lambda$, viewed as an object in both $D_{H_\lambda}^b(V_\lambda)$ and $D_{\Gmult^d}^b(V_\lambda)$. 
\end{proposition}
\begin{proof}
    In this context, the closure of every orbit $C \subset V_\lambda$ is a vector space, and thus contractible. Furthermore, by Proposition~\ref{prop:CohomologyAndExtensions}, for a pair of subsets of simple roots $I,J \subseteq S$, there are isomorphisms 
    \[
\Ext_{H_\lambda}^\bullet(\mathcal{IC}_I, \mathcal{IC}_J) \cong H_{H_\lambda}^\bullet(\overline{C}_{I \cup J})[\delta(I,J)], \quad   \Ext_{\Gmult^d}^\bullet(\mathcal{IC}_I, \mathcal{IC}_J) \cong H_{\Gmult^d}^\bullet(\overline{C}_{I \cup J})[\delta(I,J)].  
    \]
This space is contractible, and so its equivariant cohomology is isomorphic to the equivariant cohomology of a point, as was shown in Lemma \ref{lem:PointCohomology}, which is to say that
\[
H_{H_\lambda}^\bullet(\overline{C}_{I \cup J}) \cong H^\bullet(B H_\lambda), \quad H_{\Gmult^d}^\bullet(\overline{C}_{I \cup J}) \cong H^\bullet(B \Gmult^d). 
\]
The map $H_\lambda \twoheadrightarrow \Gmult^d$ from Proposition~\ref{prop:embedding} is an isogeny, hence the classifying space of the two groups shares the same complex homotopy type. Thus, 
\[
H^\bullet(B H_\lambda) \cong H^\bullet(B \Gmult^d), 
\]
and we are done.
\end{proof}
This means that the extension algebra for the direct sum of simple perverse sheaves of the form $\IC(\1_C)$ has equivalent descriptions in both $D_{H_\lambda}^b(V_\lambda)$ and $D_{\Gmult^d}^b(V_\lambda)$, which is to say that no $H_\lambda$-equivariant extensions between any two perverse sheaves of this form factor through some other simple intersection cohomology complex found in $\Perv_{H_\lambda}(V_\lambda)$, \emph{i.e.}, an intersection cohomology complex that corresponds to a representation of a pure inner form of $G$, instead of $G$ itself, under the Langlands correspondence. We can then use the equivalence 
\[
D_{H_\lambda}^\bullet(V_\lambda) \simeq D_{\dualgroup{G}}^\bullet(X_\lambda) 
\]
to get our final result: 
\begin{theorem}\label{thm:main2}
    There is an isomorphism of algebras 
    \[
\Ext_G^\bullet (\Sigma_\lambda, \Sigma_\lambda)^! \simeq \Ext_{\dualgroup{G}}^\bullet(\mathcal{S}_\lambda , \mathcal{S}_\lambda), 
    \]
    where $\Ext_G^\bullet(\Sigma_\lambda, \Sigma_\lambda)^! $ denotes the Koszul dual of the extension algebra of generalised Steinberg representations of $G$. 
\end{theorem}
\begin{proof}
    We can produce an isomorphism of algebras 
    \[
\Ext_{\dualgroup{G}}^\bullet(\mathcal{S}_\lambda, \mathcal{S}_\lambda) \cong \Ext^\bullet_{H_\lambda}(\mathcal{T}_\lambda, \mathcal{T}_\lambda) 
    \]
    using the equivalence of triangulated categories provided by the induction equivalence along inclusion of algebraic groups $H_\lambda \subset \dualgroup{G}$, where we choose the equivalence so that $\mathcal{T}_\lambda$ is the same perverse sheaf from Proposition~\ref{prop:algebracomparison}. By the conclusion of this proposition, it suffices to show that $\Ext_{\Gmult^d}^\bullet(\mathcal{T}_\lambda , \mathcal{T}_\lambda)$ is the appropriate Koszul dual. This is the endomorphism algebra of $\mathcal{T}_\lambda$ in the equivariant derived category $D_{\Gmult^d}^b(V_\lambda)$. By \cite{Braden-Lunts}*{Corollary 7.3.3},
    \[
D^b_{\Gmult^d}(\Perv(V_\lambda)) \simeq D_{\Gmult^d}^b(V_\lambda).
    \]
    As we saw in the proof of Theorem \ref{thm:main1}, there is an equivalence of categories
    \[
F: \Perv_{\Gmult^d}(V_\lambda) \xrightarrow{\sim} \Mod(E_\lambda). 
    \]
    For this reason, it follows from Lemma \ref{lem:freeequivalence} that the calculation of extensions in $D^b_{\Gmult}(V_\lambda)$ is equivalent to their calculation in $D^b(E_\lambda)$. Hence,  
    \[
\Ext_{\Gmult^d}^\bullet (\mathcal{T}_\lambda, \mathcal{T}_\lambda) \cong \Ext_{E_\lambda}^\bullet (F(\mathcal{T}_\lambda), F(\mathcal{T}_\lambda)), 
    \] 
    the latter being $E_\lambda^!$, and we saw that $E_\lambda$ is Koszul itself in Section~\ref{ssec:koszulalgebra}. As Theorem \ref{thm:main1} shows, there is an isomorphism
    \[
E_\lambda \cong \Ext_G^\bullet(\Sigma_\lambda, \Sigma_\lambda),   
    \]
   implying an isomorphism between their respective Koszul duals.
\end{proof}
This final result, Theorem~\ref{thm:main2}. can be viewed as a $p$-adic analogue of Soergel's conjecture for real groups. Generally, Soergel's philosophy is that the local Langland's correspondence should be interpreted as a Koszul duality between perverse sheaves on Vogan varieties\footnote{This is often referred to as the ``ABV perspective" when treating Archimedean groups, or groups over local fields more generally, given \cite{ABV}.} built from data on the Galois side of the Langlands correspondence and their corresponding representations on the automorphic side of the correspondence, as Soergel so eloquently laid out with Beilinson and Ginzburg in their landmark paper \cite{BGS}, providing the case for the principal block of the category $\mathcal{O}$. 

Presently, we have laid out an incarnation of Soergel's philosophy for $p$-adic groups, in the case of generalised Steinberg representations, with Theorem~\ref{thm:main1}, and Theorem~\ref{thm:main2} demonstrates that this is a true Koszul duality of algebras built from data on either side of the correspondence. We expect an analogous statement to be true for a much larger class of representations, and for a much larger class of groups, though with necessary modifications to the cohomology theory for representations of $p$-adic groups (for instance, as is noted by \cite{Orlik}*{Corollary 18}, generalised Steinberg representations for $\GL_n(F)$ enjoy an additional self-extension that has no corresponding structure in the geometry of the Vogan variety). 

\begin{bibdiv}
\begin{biblist}

\bib{Achar}{book}{
   author={Achar, Pramod N.},
   title={Perverse sheaves and applications to representation theory},
   series={Mathematical Surveys and Monographs},
   volume={258},
   publisher={American Mathematical Society},
   address={Providence, RI},
   date={2021},
   pages={xii+562},
   isbn={978-1-4704-5597-2},
   doi={10.1090/surv/258},
}

\bib{ABV}{book}{
   author={Adams, Jeffrey},
   author={Barbasch, Dan},
   author={Vogan, David A., Jr.},
   title={The Langlands classification and irreducible characters for real reductive groups},
   series={Progress in Mathematics},
   volume={104},
   publisher={Birkhäuser Boston, Inc.},
   address={Boston, MA},
   date={1992},
   pages={xii+318},
   isbn={0-8176-3634-X},
   doi={10.1007/978-1-4612-0383-4},
}

\bib{ASS}{book}{
    author={Assem, Ibrahim},
    author={Skowronski, Andrzej},
    author={Simson, Daniel}, 
    title={Elements of the Representation Theory of Associative Algebras},
    series={London Mathematical Society Student Texts},
    volume={1},
    publisher={Cambridge University Press},
    date={2006},
    doi={10.1017/CBO9780511614309},
}

\bib{Aubert}{article}{
   author={Aubert, Anne-Marie},
   title={Dualité dans le groupe de Grothendieck de la catégorie des représentations lisses de longueur finie d'un groupe réductif $p$-adique},
   journal={Transactions of the American Mathematical Society},
   volume={347},
   date={1995},
   number={6},
   pages={2179--2189},
   doi={10.2307/2154828},
}

\bib{BBD}{article}{
   author={Beĭlinson, Alexander A.},
   author={Bernstein, Joseph},
   author={Deligne, Pierre},
   title={Faisceaux pervers},
   conference={
      title={Analyse et topologie sur les espaces singuliers, I},
      address={Luminy},
      date={1981},
   },
   book={
      series={Astérisque},
      volume={100},
      publisher={Société Mathématique de France},
      address={Paris},
   },
   date={1982},
   pages={5--171},
}

\bib{BGS}{article}{
   author={Beilinson, Alexander},
   author={Ginzburg, Victor},
   author={Soergel, Wolfgang},
   title={Koszul duality patterns in representation theory},
   journal={Journal of the American Mathematical Society},
   volume={9},
   date={1996},
   number={2},
   pages={473--527},
   issn={0894-0347},
   doi={10.1090/S0894-0347-96-00192-0},
}

% \bib{Bernstein}{article}{
%    author={Bernstein, Joseph},
%    title={Le ``centre'' de Bernstein},
%    conference={
%       title={Représentations des groupes réductifs sur un corps local},
%    },
%    book={
%       publisher={Hermann},
%       address={Paris},
%    },
%    date={1984},
%    pages={1--32},
% }

\bib{BR}{article}{
    author={Bernstein, Joseph},
    author={Rumelhart, Karl},
    title={Representations of p-Adic Groups},
    note={Draft, available at \url{http://www.math.uchicago.edu/~mitya/langlands/Bernstein/Bernstein-notes.pdf}},
    date={1992}, 
}

\bib{BL}{book}{
   author={Bernstein, Joseph},
   author={Lunts, Valery},
   title={Equivariant sheaves and functors},
   series={Lecture Notes in Mathematics},
   volume={1578},
   publisher={Springer-Verlag},
   address={Berlin},
   date={1994},
   pages={iv+139},
   doi={10.1007/BFb0073549},
}

\bib{BZ76}{article}{
   author={Bernšteĭn, I. N.},
   author={Zelevinskiĭ, A. V.},
   title={Representations of the group $GL(n,F),$ where $F$ is a local non-Archimedean field},
   journal={Uspekhi Matematicheskikh Nauk},
   volume={31},
   date={1976},
   number={3(189)},
   pages={5--70},
}

\bib{Borel:Corvallis}{incollection}{
   author={Borel, Armand},
   title={Automorphic L-functions},
   booktitle={Automorphic forms, representations, and $L$-functions, Part 2},
   series={Proceedings of Symposia in Pure Mathematics, Corvallis XXXIII},
   volume={33},
   publisher={American Mathematical Society},
   address={Providence, RI},
   date={1979},
   pages={27--61},
}

\bib{Bourbaki}{book}{
    author={Bourbaki, Nicolas},
    title={Algèbre, Chapitre 8},
    publisher={Springer Berlin, Heidelberg},
    date={2012},
    note={original edition: Hermann, Paris, 1958},
}

\bib{Braden-Lunts}{article}{
   author={Braden, Tom},
   author={Lunts, Valery A.},
   title={Equivariant-constructible Koszul duality for dual toric varieties},
   journal={Advances in Mathematics},
   volume={201},
   date={2006},
   pages={408-453},
   doi={10.1016/j.aim.2005.03.001},
}

\bib{Ca}{article}{
   author={Casselman, William},
   title={A new nonunitarity argument for $p$-adic representations},
   journal={Journal of the Faculty of Science, University of Tokyo, Section IA, Mathematics},
   volume={28},
   date={1981},
   number={3},
   pages={907--928 (1982)},
}

\bib{Ca74}{article}{
   author={Casselman, W.},
   title={On a $p$-adic vanishing theorem of Garland},
   journal={Bulletin of the American Mathematical Society},
   volume={80},
   number={5},
   date={1974},
   pages={1001--1004},
   doi={10.1090/S0002-9904-1974-13611-6},
}

\bib{CFK}{article}{
   author={Cunningham, Clifton},
   author={Fiori, Andrew},
   author={Kitt, Nicole},
   title={Appearance of the Kashiwara-Saito singularity in the representation theory of $p$-adic $\operatorname{GL}_{16}$},
   journal={Pacific Journal of Mathematics},
   volume={321},
   date={2022},
   number={2},
   pages={239--282},
   doi={10.2140/pjm.2022.321.239}
}

\bib{CFMMX}{article}{
   author={Cunningham, Clifton L.R.},
   author={Fiori, Andrew},
   author={Moussaoui, Ahmed},
   author={Mracek, James},
   author={Xu, Bin},
   title={Arthur packets for $p$-adic groups by way of microlocal vanishing cycles of perverse sheaves, with examples},
   journal={Memoirs of the American Mathematical Society},
   volume={276},
   date={2022},
   number={1353},
   pages={ix+216},
   doi={10.1090/memo/1353}
}

\bib{CFZ:cubic}{article}{
   author={Cunningham, Clifton},
   author={Fiori, Andrew},
   author={Zhang, Qing},
   title={Arthur packets for $G_2$ and perverse sheaves on cubics},
   journal={Advances in Mathematics},
   volume={395},
   date={2022},
%   pages={Paper No. 108074, 74},
   issn={0001-8708},
%   review={\MR{4363577}},
   doi={10.1016/j.aim.2021.108074},
}

\bib{CFZ:unipotent}{article}{
   author={Cunningham, Clifton},
   author={Fiori, Andrew},
   author={Zhang, Qing},
   title={Toward the endoscopic classification of unipotent representations of $p$-adic $G_2$},
   date={2021},
   note={https://arxiv.org/abs/2101.04578},
}

\bib{CDFZ:Generic}{article}{
   author={Cunningham, Clifton},
   author={Dijols, Sarah},
   author={Fiori, Andrew},
   author={Zhang, Qing},
   title={Generic representations, open parameters and ABV-packets for $p$-adic groups},
   date={2024},
   note={\href{https://doi.org/10.48550/arXiv.2404.07463}{https://doi.org/10.48550/arXiv.2404.07463}},
}

\bib{CDFZ:Whittaker}{article}{
   author={Cunningham, Clifton},
   author={Dijols, Sarah},
   author={Fiori, Andrew},
   author={Zhang, Qing},
   title={Whittaker normalization of $p$-adic ABV-packets and Vogan’s conjecture for tempered representations},
   date={2024},
   note={\href{https://doi.org/10.48550/arXiv.2412.06824}{https://doi.org/10.48550/arXiv.2412.06824}},
}
\bib{CR1}{article}{
   author={Cunningham, Clifton},
   author={Ray, Mishty},
   title={Proof of Vogan's conjecture on Arthur packets: irreducible
   parameters of $p$-adic general linear groups},
   journal={Manuscripta Math.},
   volume={173},
   date={2023},
   pages={1073--1097},
%   review={\MR{4704767}},
   doi={10.1007/s00229-023-01490-7},
}

\bib{CR2}{article}{
   author={Cunningham, Clifton},
   author={Ray, Mishty},
   title={Proof of Vogan's conjecture on A-packets for $\textrm{GL}_n$ over $p$-adic fields},
   journal = {arXiv, preprint},
   date={2023},
   note={http://arxiv.org/abs/2302.10300}
}

\bib{Dat}{article}{
   author={Dat, Jean François},
   title={Espaces symétriques de Drinfeld et correspondance de Langlands locale},
   journal={Annales Scientifiques de l'École Normale Supérieure},
   volume={39},
   date={2006},
   number={1},
   pages={1--74},
   doi={10.1016/j.ansens.2005.11.002},
}

\bib{Deligne}{incollection}{
    author={Deligne, Pierre},
    title={Catégories tannakiennes},
    booktitle={The Grothendieck Festschrift},
    % editor={Cartier, Pierre},
    % editor={Illusie, Luc},
    % editor={Katz, Nicholas M.},
    % editor={Laumon, Gérard},
    % editor={Manin, Yuri I.},
    % editor={Ribet, Kenneth A.},
    series={Progress in Mathematics},
    volume={87},
    publisher={Birkhäuser},
    address={Boston, MA},
    date={1990},
    pages={111--195},
}

\bib{Haines}{incollection}{
   author={Haines, Thomas J.},
   title={The stable Bernstein center and test functions for Shimura varieties},
   booktitle={Automorphic Forms and Galois Representations},
   editor={Diamond, Fred},
   editor={Kassaei, Payman L.},
   editor={Kim, Minhyong},
   volume={2},
   publisher={Cambridge University Press},
   date={2014},
   pages={118--186},
   doi={10.1017/CBO9781107446335.005},
}

\bib{Kaletha}{article}{
   author={Kaletha, Tasho},
   title={The local Langlands conjectures for non-quasi-split groups},
   conference={
      title={Families of automorphic forms and the trace formula},
   },
   book={
      series={Simons Symposia},
      publisher={Springer, Cham},
   },
   date={2016},
   pages={217--257},
   doi = {10.1007/978-3-319-41424-9\_6}
}

% \bib{Kottwitz}{article}{
%    author={Kottwitz, Robert E.},
%    title={Stable trace formula: elliptic singular terms},
%    journal={Mathematische Annalen},
%    volume={275},
%    date={1986},
%    number={3},
%    pages={365--399},
%    doi={10.1007/BF01458611},
% }

\bib{Kottwitz-iso}{article}{
   author={Kottwitz, Robert E.},
   title={Stable trace formula: cuspidal tempered terms},
   journal={Duke Math. J.},
   volume={51},
   date={1984},
   number={3},
   pages={611--650},
   issn={0012-7094},
%   review={\MR{0757954}},
   doi={10.1215/S0012-7094-84-05129-9},
}

\bib{Konno:Langlands}{article}{
   author={Konno, Takuya},
   title={A note on the Langlands classification and irreducibility of induced representations of $p$-adic groups},
   journal={Kyushu Journal of Mathematics},
   volume={57},
   date={2003},
   number={2},
   pages={383--409},
   doi={10.2206/kyushujm.57.383},
}

\bib{ZL}{article}{ 
   author={Leszczyński, Zbigniew},
   title={On the representation type of tensor product algebras},
   journal={Fundamenta Mathematicae},
   volume={144},
   number={2},
   pages={143-161},
   date={1994},
   url={http://eudml.org/doc/212020},
}

\bib{LW}{article}{
   author={Lörincz, C. András},
   author={Walther, Uli},
   title={On categories of equivariant $\mathcal{D}$-modules}, 
   journal={Advances in Mathematics},
   volume={351},
   date={2019},
   pages={429-478}, 
   doi={10.1016/j.aim.2019.05.005},
}

\bib{VL}{article}{
   author={Lyubashenko, Volodymyr},
   title={Tensor products of categories of equivariant perverse sheaves},
   journal={Cahiers de Topologie et Géométrie Différentielle Catégoriques},
   volume={43},
   number={1},
   date={2002},
   pages={49--79},
   url={http://www.numdam.org/item/CTGDC_2002__43_1_49_0/}
}

\bib{RMV99}{article}{
   author={Martínez-Villa, Roberto},
   title={Graded, Selfinjective, and Koszul Algebras},
   journal={Journal of Algebra},
   volume={215},
   number={1},
   date={1999},
   pages={34--72},
   doi={10.1006/jabr.1998.7728},
}

\bib{RMV}{article}{
   author={Martínez-Villa, Roberto},
   title={Introduction to Koszul Algebras},
   journal={Revista de la Unión Matemática Argentina},
   volume={48},
   number={2},
   date={2007},
   pages={67--95},
}

\bib{Moussaoui}{article}{
   author={Moussaoui, Ahmed},
   title={Centre de Bernstein dual pour les groupes classiques},
   journal={Representation Theory},
   volume={21},
   date={2017},
   pages={172--246},
   doi={10.1090/ert/503},
}

\bib{Mracek}{thesis}{
   author={Mracek, James},
   title={Applications of Algebraic Microlocal Analysis in Symplectic Geometry and Representation Theory},
   type={Ph.D. thesis},
   organization={University of Toronto},
   date={2017},
   pages={101},
   isbn={978-0355-53067-4},
}

\bib{NP}{article}{
   author={Nori, Madhav},
   author={Prasad, Dipendra},
   title={On a duality theorem of Schneider-St\"{u}hler},
   journal={Journal für die reine und angewandte Mathematik},
   volume={762},
   date={2020},
   pages={261--280},
   doi={10.1515/crelle-2018-0028},
}

\bib{Orlik}{article}{
   author={Orlik, Sascha},
   title={On extensions of generalised Steinberg representations},
   journal={Journal of Algebra},
   volume={293},
   date={2005},
   number={2},
   pages={611--630},
   doi={10.1016/j.jalgebra.2005.03.028},
}

\bib{PP}{book}{
   author={Polishchuk, Alexander},
   author={Positselski, Leonid}, 
   title={Quadratic Algebras},
   series={University Lecture Series}, 
   volume={37},
   publisher={American Mathematical Society},
   date={2005},
   doi={10.1090/ulect/037},
}

\bib{Sato-Kimura}{article}{
   author={Sato, Mikio},
   author={Kimura, Tatsuo},
   title={A classification of irreducible prehomogeneous vector spaces and their relative invariants},
   journal={Nagoya Mathematical Journal},
   volume={65},
   date={1977},
   pages={1--155},
   doi={10.1017/S0027763000017633},
}

% \bib{Schneider-Stuhler}{article}{
%    author={Schneider, Peter},
%    author={Stuhler, Ulrich},
%    title={Representation theory and sheaves on the Bruhat-Tits building},
%    journal={Publications Mathématiques de l'IHÉS},
%    number={85},
%    date={1997},
%    pages={97--191},
% }

\bib{Silberger}{article}{
   author={Silberger, Allan J.},
   author={Zink, Ernst-Wilhelm},
   title={Langlands classification for $L$-parameters},
   journal={Journal of Algebra},
   volume={511},
   date={2018},
   pages={299--357},
   doi={10.1016/j.jalgebra.2018.06.012},
}

\bib{Soergel}{incollection}{
   author={Soergel, Wolfgang},
   title={Langlands' philosophy and Koszul duality},
   booktitle={Algebra---representation theory},
   % editor={Dräxler, Peter},
   % editor={Michler, Gerhard O.},
   % editor={Ringel, Claus Michael},
   series={NATO Science Series II: Mathematics, Physics and Chemistry},
   volume={28},
   publisher={Kluwer Academic Publishers},
   address={Dordrecht},
   date={2001},
   pages={379--414},
}

\bib{Vogan:Langlands}{incollection}{
   author={Vogan, David A., Jr.},
   title={The local Langlands conjecture},
   booktitle={Representation theory of groups and algebras},
   series={Contemporary Mathematics},
   volume={145},
   publisher={American Mathematical Society},
   address={Providence, RI},
   date={1993},
   pages={305--379},
   doi={10.1090/conm/145/1216197},
}

\bib{Geoff}{thesis}{
   author={Vooys, Geoff},
   title={Equivariant Functors and Sheaves},
   type={Ph.D. thesis},
   date={2021},
   note={Available at \url{https://doi.org/10.48550/arXiv.2110.01130}},
}

\bib{WW}{article}{
   author={Webster, Ben},
   author={Williamson, Geordie},
   title={A geometric model for Hochschild homology of Soergel bimodules},
   journal={Geometry and Topology},
   volume={12},
   date={2008},
   pages={1243--1263},
   doi={10.2140/gt.2008.12.1243},
}

\bib{Zelevinski}{article}{
   author={Zelevinski, Andrey},
   title={Induced representations of reductive $p$-adic groups II},
   journal={Annales Scientifiques de l'École Normale Supérieure},
   volume={13},
   date={1980},
   pages={154--210},
   doi={10.24033/asens.1381},
}
\end{biblist}
\end{bibdiv} 

\end{document}